        \def\shuffle{\mathbin{\small{\sqcup}\!{\sqcup}}}
\newtheorem{theorem}{Theorem}[section]
\newtheorem*{theorem*}{Theorem}
\newtheorem{lemma}[theorem]{Lemma}
\newtheorem{corollary}[theorem]{Corollary}
\newtheorem{problem}[theorem]{Problem}
\theoremstyle{definition}
\newtheorem{example}[theorem]{Example}
\newtheorem{remark}[theorem]{Remark}
\def\r{{\mathfrak {row}}}
\def\i{{\mathfrak {lis}}}
\def\d{{\mathfrak {lds}}}
\DeclareMathAlphabet{\mathpzc}{OT1}{pzc}{m}{it}
\title{
$K$-theoretic Poirier-Reutenauer bialgebra
}
\numberwithin{equation}{section}
\begin{document}

\author{Rebecca Patrias}
\address{\hspace{-.3in} Department of Mathematics, University of Minnesota,
Minneapolis, MN 55414, USA}
\email{patri080@umn.edu}

\author{Pavlo Pylyavskyy}
\address{\hspace{-.3in} Department of Mathematics, University of Minnesota,
Minneapolis, MN 55414, USA}
\email{ppylyavs@umn.edu}

\date{\today
}

\thanks{
}

\subjclass{
Primary
05E99, 
}

\keywords{}

\begin{abstract}
We use the $K$-Knuth equivalence of Buch and Samuel \cite{BS} to define a $K$-theoretic analogue of the Poirier-Reutenauer Hopf algebra. As an application, we 
rederive the $K$-theoretic Littlewood-Richardson rules of Thomas and Yong \cite{ThY2,ThY3} and of Buch and Samuel \cite{BS}.
\end{abstract}

\ \vspace{-.1in}

\maketitle

\section{Introduction}
\label{sec:intro}

\subsection{Poirier-Reutenauer Hopf algebra}

In \cite{PR}, Piorier and Reutenauer defined a Hopf algebra structure on the $\mathbb Z$-span of all standard Young tableaux, which was
was later studied, for example, in \cite{RT} and \cite{DHNT}. Implicitly, this algebra also appears in \cite{LTh}. Let us briefly recall the definition 
and illustrate it with few examples. 

A Young diagram or partition is a finite collection of boxes arranged in left-justified rows such that the lengths of the rows are 
weakly decreasing from top to bottom. We denote the shape of a Young diagram $\lambda$ by $(\lambda_1,\lambda_2,\ldots,\lambda_k)$, listing the lengths of each row, $\lambda_i$.
A Young tableau is a filling of the boxes of a Young diagram with positive integers
so that the fillings increase in rows and columns. 
We call a Young tableau a standard Young tableau if it is filled with positive integers $[k]$ for some $k$, where each 
integer appears exactly once. The tableau shown below is an example of a standard Young tableau of shape $(3,3,2)$.
\begin{center}
 \begin{ytableau}
  1 & 4 & 6 \\
2 & 5 & 7 \\
3 & 8
 \end{ytableau}
\end{center}

Given two partitions, $\lambda$ and $\mu$, such that $\mu \subset \lambda$, we define the skew diagram $\lambda/\mu$ to be the set of boxes of $\lambda$ that
do not belong to $\mu$. If the shape of $T$ is $\lambda/\mu$ where $\mu$ is the empty shape, we say that $T$ is of straight shape. 
The definitions of Young tableaux and standard Young tableaux extend naturally to skew diagrams. For example, the figure below shows 
a standard Young tableau of skew shape $(3,3,1)/(2,1)$.

\begin{center}
 \begin{ytableau}
  \none & \none & 3 \\
\none & 1 & 4 \\
2
 \end{ytableau}
\end{center}

Given a possibly skew Young tableau $T$, its row reading word, $\r(T)$, is obtained by reading the entries in the rows of $T$ from left to right 
starting with the bottom row and ending with the top row. For the first standard Young tableau shown above, $\r(T)=38257146$, and for the standard Young tableau of skew shape, $\r(T)=2143$.

Next, consider words with distinct letters on some ordered alphabet $A$. We have the following \textit{Knuth relations}: 
\begin{center}
\begin{tabular}{l c l}
 & $pqs \approx qps$ and $sqp \approx spq$ & whenever $p<s<q$.
 \end{tabular}
\end{center}

Given two words, $w_1$ and $w_2$, we say that they are Knuth equivalent, denoted $w_1\approx w_2$, if $w_2$ can be obtained from $w_1$ by a finite sequence of 
Knuth relations. For example, $52143 \approx 25143$ because $$52143 \approx 52413 \approx 25413 \approx 25143.$$ If $T_1$ and $T_2$ are two tableaux, we say that 
$T_1 \approx T_2$ if $\r(T_1)\approx \r(T_2)$. For example, 
\begin{center}
 $T_1=$
\begin{ytableau}
 1 & 2 \\
3
\end{ytableau}\hspace{.5in}
$\approx$
\hspace{.5in}$T_2=$
\begin{ytableau}
 \none & \none & 2 \\
\none & 3 \\
1
\end{ytableau}\hspace{.1in}.

\end{center}

From Theorem 5.2.5 of \cite{LTh}, any word with letters exactly $[k]$ is Knuth equivalent to $\r(T)$ for a 
unique standard Young tableau $T$ of straight shape. This unique standard Young tableau may be obtained via RSK insertion of the 
word (see \cite{EC2}). 
For example, $52143\approx \r(T)$ for 
\begin{center}
$T=$
 \begin{ytableau}
  1 & 3 \\
2 & 4 \\
5
 \end{ytableau}\hspace{.05in}.
\end{center}

For a standard Young tableau $T$, let ${\bf T}=\sum_{w \approx \r(T)} w$. In other words, ${\bf T}$ is the sum of words 
that are Knuth equivalent to $\r(T)$. Let $PR$ be the $\mathbb{R}$-vector space generated by the set of $\bf T$ for all standard Young tableaux.

Following \cite{PR}, we next describe a bialgebra structure on $PR$. Start with two words, $w_1$ and $w_2$, in $PR$, where $w_1$ has letters 
exactly $[n]$ for some positive integer $n$. Define $w_2[n]$ to be the word obtained by adding $n$ to each letter of $w_2$. Now define the product $w_1 \ast w_2$ 
to be $w_1 \shuffle w_2[n]$, the shuffle product of $w_1$ and $w_2[n]$. For example, $12 \ast 1=12\shuffle 3=123 + 132 + 312$.

For a word $w$ without repeated letters, define $st(w)$ to be the unique word on $\{1,2,\ldots,|w|\}$ obtained by applying 
the unique order-preserving injective mapping from the letters of $w$ onto $\{1,2,\ldots,|w|\}$ to the letters of $w$. For example, 
$st(1426)=1324$. Then define the coproduct on $PR$ by defining $$\Delta(w)=\sum st(u)\otimes st(v),$$ where the sum is over 
all words $u$ and $v$ such that $w$ is the concatenation of $u$ and $v$. For example, $\Delta(312)=\emptyset \otimes 312 + 1\otimes 12 + 21 \otimes 1 + 312 \otimes \emptyset$, 
where $\emptyset$ denotes the empty word. As shown in \cite{PR}, the vector space $PR$, where we extend product $\ast$ and coproduct $\Delta$ by linearity,
forms a bialgebra.

\subsection{Two versions of the Littlewood-Richardson rule}
While being interesting in its own right, the
Poirier-Reutenauer Hopf algebra allows us to obtain a version of the Littlewood-Richardson rule for the cohomology rings of Grassmannians. In other words, it 
yields an explicitly positive description for the structure constants of the cohomology ring in the basis of {\it {Schubert classes}}. It is well-known that 
the Schubert classes can be represented by Schur functions of partitions that fit inside a rectangle. Thus, an essentially equivalent formulation of the problem is to describe 
structure constants of the ring of symmetric functions in terms of the basis of Schur functions. We refer the reader to \cite{M} for a great introduction to 
the subject. 

To see how the Poirier-Reutenauer Hopf algebra helps, let us state the following theorems.

 \begin{theorem} \cite[Theorem 5.4.3]{LTh} \label{thm:insold}
Let $T_1$ and $T_2$ be two standard Young tableaux. 
Then we have 
$${\bf T_1} \ast {\bf T_2} =
\displaystyle\sum_{T\in T(T_1 \shuffle T_2)} {\bf T},$$
where $T(T_1 \shuffle T_2)$ is the set of standard tableaux $T$ such that $T|_{[n]} = T_1$ and $T|_{[n+1,n+m]} \approx T_2.$
\end{theorem}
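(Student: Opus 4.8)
The plan is to use the fact that the family $\{\mathbf T : T \text{ a straight-shape standard Young tableau}\}$ has pairwise disjoint supports. Indeed, by the fact recalled just after the definition of Knuth equivalence (Theorem 5.2.5 of \cite{LTh}), every word with letters exactly $[k]$ belongs to a unique Knuth class, and that class is the support of exactly one $\mathbf T$. Hence, if an element $x=\sum_w c_w\, w$ of the $\mathbb R$-span of all words is \emph{Knuth invariant}, meaning $c_w=c_{w'}$ whenever $w\approx w'$, then automatically $x\in PR$ and $x=\sum_T c_{\r(T)}\,\mathbf T$, the sum over straight-shape $T$. So it suffices to write $\mathbf{T_1}\ast\mathbf{T_2}$ in the form $\sum_w c_w\,w$, verify that it is Knuth invariant, and then compute $c_{\r(T)}$.

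First I would unfold the definitions: $\mathbf{T_1}\ast\mathbf{T_2}=\big(\sum_{u\approx\r(T_1)}u\big)\shuffle\big(\sum_{v\approx\r(T_2)}v[n]\big)$. Because $u$ has letter set exactly $[n]$ while $v[n]$ has letter set exactly $\{n+1,\dots,n+m\}$, a shuffle of two words on disjoint alphabets is multiplicity free, and any word $w$ occurring in $u\shuffle v[n]$ recovers the pair $(u,v)$ via $u=w|_{[n]}$ and $v[n]=w|_{[n+1,n+m]}$; in particular distinct pairs contribute disjoint sets of words. Therefore $\mathbf{T_1}\ast\mathbf{T_2}=\sum_w c_w\,w$ with $c_w\in\{0,1\}$, and $c_w=1$ exactly when $w$ has letter set $[n+m]$, $w|_{[n]}\approx\r(T_1)$, and $st\big(w|_{[n+1,n+m]}\big)\approx\r(T_2)$.

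The remaining ingredient is the lemma that Knuth equivalence is preserved under restriction to an alphabet-interval $[a,b]$: deleting from a word all letters lying outside $[a,b]$ carries each elementary Knuth relation to another such relation or to a trivial identity, which one checks by a short case analysis on which of the three distinguished letters (if any) lies outside $[a,b]$, the point being that the middle letter $s$ of a triple $p<s<q$ can never be the unique one outside. Granting this, and noting that $st$ preserves $\approx$ since it is order preserving, we get $c_w=c_{w'}$ whenever $w\approx w'$, so $\mathbf{T_1}\ast\mathbf{T_2}$ is Knuth invariant. To finish, evaluate the conditions of the previous paragraph at $w=\r(T)$ for a straight-shape standard Young tableau $T$ on $[n+m]$. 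Restricting the row reading word of a standard Young tableau to an alphabet-interval yields the row reading word of the sub-(skew-)tableau on those entries, and for the initial segment $[n]$ this subtableau $T|_{[n]}$ is of straight shape. Hence $\r(T)|_{[n]}=\r(T|_{[n]})$, which is Knuth equivalent to $\r(T_1)$ if and only if $T|_{[n]}=T_1$ by uniqueness of the straight-shape tableau in a Knuth class; and $st\big(\r(T)|_{[n+1,n+m]}\big)=\r\big(st(T|_{[n+1,n+m]})\big)$ is Knuth equivalent to $\r(T_2)$ if and only if $T|_{[n+1,n+m]}\approx T_2$. Thus $c_{\r(T)}=1$ precisely for $T\in T(T_1\shuffle T_2)$, giving $\mathbf{T_1}\ast\mathbf{T_2}=\sum_{T\in T(T_1\shuffle T_2)}\mathbf T$.

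I expect the restriction lemma to be the one genuinely load-bearing step: once restriction to an alphabet-interval is known to commute with Knuth equivalence --- equivalently, with RSK insertion and with the passage to subtableaux --- the rest is bookkeeping about shuffles and supports. A secondary point requiring care is the multiplicity count in the second paragraph, namely that $\mathbf{T_1}\ast\mathbf{T_2}$ is a genuine $0/1$ combination of words before Knuth invariance is ever invoked.
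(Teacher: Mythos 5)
Your proof is correct, and it follows essentially the route the paper itself takes: the paper states Theorem \ref{thm:insold} as a citation from \cite{LTh} without proof, but its proofs of the $K$-theoretic analogues (Theorems \ref{thm:PRprod} and \ref{thm:hecktoins}) rest on exactly your two load-bearing ingredients --- the restriction-to-an-interval lemma (Lemma \ref{restricttointerval}, including your observation that the middle letter of a Knuth triple cannot be the unique one outside an interval) to get closure under equivalence, and compatibility of restriction with insertion (Lemma \ref{lem:restrict}) to identify which classes occur. Your additional bookkeeping (disjoint supports, the $0/1$ multiplicity count for shuffles on disjoint alphabets, and evaluation at the representative $\r(T)$) is sound.
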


Given a tableau $T$, define $\overline{T}$ to be the tableau of the same shape as $T$ with reading word $st(\r(T))$.
The following theorem is analogous to Theorem \ref{thm:insold} and is not hard to prove using the methods of \cite{LTh}.

\begin{theorem} \label{thm:cohecktoinsold}
 Let $S$ be a standard Young tableau. We have 
$$\Delta({\bf S}) =
\displaystyle\sum_{(T',T'') \in T(S)} {\bf \overline T'} \otimes {\bf \overline T''},$$
where $T(S)$ is the set of pairs of tableaux $T',T''$ such that $\r(T') \r(T'') \approx \r(S)$.
\end{theorem}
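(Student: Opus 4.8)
The plan is to prove the identity by expanding both sides as sums of word monomials in $PR \otimes PR$ and matching these by an explicit bijection, mirroring the treatment of the product in Theorem \ref{thm:insold}. Put $n+m = |S|$, so that every $w \approx \r(S)$ is a permutation of $[n+m]$. Expanding the left side by linearity,
\[
\Delta(\mathbf{S}) = \sum_{w \approx \r(S)} \Delta(w) = \sum_{w \approx \r(S)} \ \sum_{w = uv} st(u) \otimes st(v),
\]
where the inner sum is over factorizations of $w$ into a concatenation $uv$ (so $u$ has distinct letters forming some set $A \subseteq [n+m]$ and $v$ has distinct letters forming $[n+m]\setminus A$). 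Expanding the right side, each pair $(T',T'') \in T(S)$ contributes $\mathbf{\overline{T'}} \otimes \mathbf{\overline{T''}} = \sum_{x \approx \r(\overline{T'})} \sum_{y \approx \r(\overline{T''})} x \otimes y$. It therefore suffices to give a bijection, preserving the tensor monomial, between the triples $(w,u,v)$ occurring on the left and the quadruples $(T',T'',x,y)$ occurring on the right; here $T',T''$ are the (straight-shape, not necessarily standard) tableaux occurring in $T(S)$, whose entry sets partition $[n+m]$, and $x,y$ range over the Knuth classes of $\overline{T'},\overline{T''}$.

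Two elementary properties of Knuth equivalence drive the bijection. First, Knuth moves are local, so concatenation respects Knuth equivalence: if $u \approx u'$ and $v \approx v'$ on complementary alphabets, then $uv \approx u'v'$. Second, for words on a fixed alphabet $A$, standardization $w \mapsto st(w)$ along the order-isomorphism $A \to [|A|]$ and the evident unstandardization along its inverse are mutually inverse operations, each preserving Knuth equivalence. Combined with Theorem 5.2.5 of \cite{LTh}, these yield: every word $u$ with distinct letters from $A$ is Knuth equivalent to the reading word of a \emph{unique} straight-shape tableau $\mathrm{rect}(u)$ with entry set $A$; and, since $\overline{T}$ is by definition the tableau of the shape of $T$ with reading word $st(\r(T))$, one has $\r(\overline{\mathrm{rect}(u)}) = st(\r(\mathrm{rect}(u))) \approx st(u)$.

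Now the bijection. Send a left-side triple $(w,u,v)$ with $uv = w \approx \r(S)$ to $(T',T'',st(u),st(v))$ with $T' = \mathrm{rect}(u)$, $T'' = \mathrm{rect}(v)$: by locality $\r(T')\r(T'') \approx uv \approx \r(S)$, so $(T',T'') \in T(S)$, and $st(u) \approx \r(\overline{T'})$, $st(v) \approx \r(\overline{T''})$ by the last display, so this is a valid right-side quadruple with tensor monomial $st(u)\otimes st(v)$. Conversely, send $(T',T'',x,y)$ with $(T',T'') \in T(S)$ to $(uv,u,v)$, where $A$ is the entry set of $T'$, $u$ is the unstandardization of $x$ to $A$, and $v$ the unstandardization of $y$ to $[n+m]\setminus A$: by the second property $u \approx \r(T')$ and $v \approx \r(T'')$, hence $uv \approx \r(T')\r(T'') \approx \r(S)$ by locality, and $uv$ is a legal factorization with tensor monomial $x\otimes y = st(u)\otimes st(v)$. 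These assignments are mutually inverse, the key points being that $st$ and unstandardization over the relevant alphabets invert one another and that a straight-shape tableau equals its own rectification, so $\mathrm{rect}(u) = T'$ when $u$ is the unstandardization of $x$ to the entry set of $T'$.

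Since the two sides are then sums over sets in monomial-preserving bijection, they are equal, proving the theorem. I do not expect a genuine obstacle — the argument is bookkeeping, consistent with the remark before the statement that it follows from the methods of \cite{LTh} — but the step deserving the most care is tracking alphabets: the ``first half'' $A$ of a factorization need not be an initial segment of $[n+m]$, so the tableaux $T',T''$ in $T(S)$ are honest straight-shape tableaux with arbitrary complementary entry sets rather than standardized ones, and one must standardize and unstandardize over the correct alphabet at each stage.
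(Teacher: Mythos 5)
Your proof is correct, and it follows essentially the route the paper intends: the paper omits a proof of this statement (asserting only that it follows from the methods of \cite{LTh}), but its own proofs of the $K$-theoretic analogues (Theorems \ref{thm:PRcoprod} and \ref{thm:cohecktoins}) rest on exactly the two facts you isolate — locality of the Knuth relations, so they can be applied in parallel on either side of the tensor factorization, and their compatibility with standardization. Your careful tracking of the non-initial alphabets of $T'$ and $T''$ is the right point to worry about and is handled correctly.
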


Let $\Lambda$ denote the ring of symmetric functions. Denote by $s_{\lambda}$ its basis of Schur functions, mentioned above. See  for example \cite{EC2} for details.
Then $\Lambda$ has a bialgebra structure, see \cite{Zel} for details. 

We are interested in a combinatorial rule for the coefficients $c_{\lambda,\mu}^{\nu}$ in the decompositions 
$$s_{\lambda} s_{\mu} = \sum_{\nu} c_{\lambda,\mu}^{\nu} s_{\nu}.$$

Define $\psi: PR \longrightarrow \Lambda$ by 
$$\psi({\bf T}) = s_{\lambda(T)},$$
where $\lambda(T)$ denotes the shape of $T$.

\begin{theorem} \cite[Theorem 5.4.5]{LTh} \label{thm:psi}
 The map $\psi$ is a bialgebra morphism. 
\end{theorem}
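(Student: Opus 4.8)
The plan is to verify separately that $\psi$ is an algebra morphism and a coalgebra morphism; since $PR$ and $\Lambda$ are graded and connected, this suffices to conclude that $\psi$ is a bialgebra (indeed Hopf) morphism, and compatibility of $\psi$ with units and counits is immediate because $\psi$ sends the empty tableau to $s_\emptyset=1$ and sends every nonempty $\mathbf{T}$ to an element of positive degree, hence to $0$ under the counit of $\Lambda$.

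For the multiplicative part I would apply $\psi$ to the identity of Theorem~\ref{thm:insold}, obtaining $\psi(\mathbf{T_1}\ast\mathbf{T_2})=\sum_{T\in T(T_1\shuffle T_2)}s_{\lambda(T)}$, and then fix a shape $\nu$ and count the tableaux $T$ of shape $\nu$ contributing to this sum. For such a $T$, the subtableau $T|_{[n+1,n+m]}$ is a standard skew tableau of shape $\nu/\lambda(T_1)$, and the condition $T|_{[n+1,n+m]}\approx T_2$ says precisely that it rectifies to $T_2$, using that Knuth equivalence of reading words coincides with jeu de taquin equivalence. Thus the coefficient of $s_\nu$ in $\psi(\mathbf{T_1}\ast\mathbf{T_2})$ is the number of standard skew tableaux of shape $\nu/\lambda(T_1)$ that rectify to the fixed standard tableau $T_2$ of shape $\lambda(T_2)$, and by the classical jeu de taquin form of the Littlewood--Richardson rule (see, e.g., \cite{M} or \cite{EC2}) this number is $c^{\nu}_{\lambda(T_1),\lambda(T_2)}$, in particular independent of the chosen representative $T_2$. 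Summing over $\nu$ gives $\psi(\mathbf{T_1}\ast\mathbf{T_2})=\sum_\nu c^{\nu}_{\lambda(T_1),\lambda(T_2)}s_\nu=s_{\lambda(T_1)}s_{\lambda(T_2)}=\psi(\mathbf{T_1})\psi(\mathbf{T_2})$.

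For the comultiplicative part I would argue dually, applying $\psi\otimes\psi$ to the identity of Theorem~\ref{thm:cohecktoinsold} to get $(\psi\otimes\psi)\Delta(\mathbf{S})=\sum_{(T',T'')\in T(S)}s_{\lambda(T')}\otimes s_{\lambda(T'')}$, and comparing with the coproduct on $\Lambda$ of \cite{Zel}, which on the Schur basis reads $\Delta(s_\nu)=\sum_{\lambda\subseteq\nu}s_\lambda\otimes s_{\nu/\lambda}=\sum_{\lambda,\mu}c^{\nu}_{\lambda,\mu}\,s_\lambda\otimes s_\mu$. It then suffices to check that for fixed shapes $\lambda$ and $\mu$ the number of pairs $(T',T'')\in T(S)$ with $\lambda(T')=\lambda$ and $\lambda(T'')=\mu$ equals $c^{\nu}_{\lambda,\mu}$, where $\nu$ is the shape of $S$. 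Here I would rephrase $\r(T')\r(T'')\approx\r(S)$ as the statement that RSK-inserting the reading word of $T''$ into $T'$ (whose insertion tableau is $T'$ itself, being of straight shape) yields $S$, so that the recording data of these insertions runs over the standard skew tableaux of shape $\nu/\lambda$ rectifying to a fixed shape-$\mu$ tableau, again counted by $c^{\nu}_{\lambda,\mu}$; comparing coefficients of $s_\lambda\otimes s_\mu$ then gives $(\psi\otimes\psi)\circ\Delta=\Delta\circ\psi$.

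In both halves the genuine content --- and the step I expect to be the main obstacle --- is the identification of the relevant tableau counts with the Littlewood--Richardson coefficients $c^{\nu}_{\lambda,\mu}$; the rest is formal. Concretely this rests on two standard facts of jeu de taquin and RSK theory: that rectification of a standard skew tableau is well defined and Knuth-invariant, and that the number of standard skew tableaux of shape $\nu/\lambda$ rectifying to a fixed standard tableau of shape $\mu$ is independent of that tableau and equals $c^{\nu}_{\lambda,\mu}$ (equivalently, $s_{\nu/\lambda}=\sum_\mu c^{\nu}_{\lambda,\mu}s_\mu$). One may simply cite these; alternatively, to keep everything inside the circle of ideas of \cite{LTh}, one can reprove them by the same insertion arguments that prove Theorems~\ref{thm:insold} and \ref{thm:cohecktoinsold}, or bypass the Littlewood--Richardson rule altogether by recognizing $\psi$ as the restriction to $PR$ of the classical Hopf-algebra surjection from free quasi-symmetric functions onto $\QSym$ that sends a permutation $w$ to the fundamental quasi-symmetric function indexed by its descent set, together with the expansion $s_\lambda=\sum_{w\approx\r(T)}F_{\mathrm{Des}(w)}$ valid for any standard Young tableau $T$ of shape $\lambda$.
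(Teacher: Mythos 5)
The paper itself offers no proof of this statement: it is quoted from Leclerc--Thibon \cite[Theorem 5.4.5]{LTh}, so there is no internal argument to compare yours against. On its own terms your argument is essentially correct, but note that its primary route runs the paper's logic backwards: the paper's point is to \emph{deduce} the Littlewood--Richardson rules (Corollaries \ref{cor:LRold} and \ref{cor:skewLRold}) by applying $\psi$ to Theorems \ref{thm:insold} and \ref{thm:cohecktoinsold}, whereas you \emph{assume} the jeu-de-taquin LR rule (and its dual, the Knuth-invariance of the count of skew SYT rectifying to a fixed tableau) in order to identify the coefficients of $s_\nu$ and of $s_\lambda\otimes s_\mu$. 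That is not a mathematical error --- those facts have independent proofs --- but it makes the subsequent derivation of Corollary \ref{cor:LRold} from Theorem \ref{thm:psi} circular, so within this paper's architecture you should promote your final alternative to the main proof: $\psi$ factors as ${\bf T}\mapsto\sum_{w\approx\r(T)}F_{\mathrm{Des}(w)}$ followed by Gessel's expansion $s_{\lambda(T)}=\sum_{w\approx\r(T)}F_{\mathrm{Des}(w)}$, and the descent-to-fundamental map is a bialgebra morphism onto $\QSym$ by the shuffle and deconcatenation rules for the $F_\alpha$. This is precisely the strategy the authors use for the $K$-theoretic analogue (Theorems \ref{thm:phi} and \ref{thm:phi(h)}, with $L_{\mathcal C(w)}$ playing the role of $F_{\mathrm{Des}(w)}$ and $J_\lambda$ the role of $s_\lambda$), so it is the natural ``paper-internal'' proof; it also spares you the slightly delicate bookkeeping in your comultiplicative step, where the pairs $(T',T'')$ live on complementary sub-alphabets that need not be initial segments.
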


Applying $\psi$ to the equalities in Theorem \ref{thm:insold} and Theorem \ref{thm:cohecktoinsold}, we obtain the following two versions of the Littlewood-Richardson rule. 

\begin{corollary} \cite[Theorem A1.3.1]{EC2} \label{cor:LRold}
 Let $T$ be a standard Young tableau of shape $\mu$. Then the coefficient $c_{\lambda, \mu}^{\nu}$ 
is equal to the number of standard Young tableaux $R$
of skew shape $\nu/\lambda$ such that $\r(R) \approx \r(T)$.
\end{corollary}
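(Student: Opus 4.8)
The plan is to specialize the product formula of Theorem \ref{thm:insold} under the bialgebra morphism $\psi$ of Theorem \ref{thm:psi}. Fix an arbitrary standard Young tableau $T_1$ of shape $\lambda$ and set $T_2 = T$, so $T_2$ has shape $\mu$; write $n=|\lambda|$ and $m=|\mu|$. Since $\psi$ is a morphism of bialgebras and $\psi({\bf T_1})=s_\lambda$, $\psi({\bf T_2})=s_\mu$, we get
\[
\sum_{\nu} c_{\lambda,\mu}^{\nu}\, s_{\nu} \;=\; s_\lambda s_\mu \;=\; \psi({\bf T_1}\ast{\bf T_2}) \;=\; \sum_{T'\in T(T_1\shuffle T_2)} \psi({\bf T'}) \;=\; \sum_{T'\in T(T_1\shuffle T_2)} s_{\lambda(T')},
\]
where the second-to-last equality is Theorem \ref{thm:insold}. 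As the Schur functions are linearly independent, $c_{\lambda,\mu}^{\nu}$ equals the number of $T'\in T(T_1\shuffle T_2)$ with $\lambda(T')=\nu$, and it remains to match this set bijectively with the skew standard Young tableaux $R$ of shape $\nu/\lambda$ satisfying $\r(R)\approx\r(T)$.

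For the bijection, take $T'\in T(T_1\shuffle T_2)$ of shape $\nu$. The requirement $T'|_{[n]}=T_1$ forces the entries $1,\dots,n$ of $T'$ to occupy exactly the subdiagram $\lambda$ (in particular $\lambda\subseteq\nu$, and if $\lambda\not\subseteq\nu$ both sides of the claimed identity vanish), so the cells carrying $n+1,\dots,n+m$ form the skew shape $\nu/\lambda$. Let $R$ be the filling of $\nu/\lambda$ obtained by restricting $T'$ to those cells and subtracting $n$ from every entry; it increases along rows and columns because $T'$ does, hence is a standard Young tableau of shape $\nu/\lambda$ with entries $[m]$. Since Knuth equivalence depends only on the relative order of the letters, the condition $T'|_{[n+1,n+m]}\approx T_2$ is exactly $\r(R)\approx\r(T_2)=\r(T)$. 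Conversely, given such an $R$, reconstruct $T'$ by placing $T_1$ on the cells of $\lambda$ and $R$ with $n$ added back to each entry on the cells of $\nu/\lambda$. This filling increases along the rows and columns of $\nu$: inside $\lambda$ it is $T_1$, inside $\nu/\lambda$ it is the shifted $R$, and in any row or column comparison straddling the two regions the cell lying in $\lambda$ is weakly above-and-left of the one in $\nu/\lambda$ — automatic because $\lambda\subseteq\nu$ is itself a partition shape — so the smaller label ($\le n$) precedes the larger one. Thus $T'$ is a standard Young tableau lying in $T(T_1\shuffle T_2)$, and the two constructions are mutually inverse.

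The only steps requiring care are this gluing argument and the observation that standardization commutes with Knuth equivalence; the rest is bookkeeping. Note that the bijection also renders the count manifestly independent of the chosen $T_1$, consistent with the symmetric-function identity above. I do not expect a genuine obstacle here: the mathematical content of the corollary is entirely carried by Theorems \ref{thm:insold} and \ref{thm:psi}, and the proof is the standard device of pushing a bialgebra identity through a morphism and then reading off coefficients in the Schur basis.
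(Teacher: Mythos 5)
Your proposal is correct and follows essentially the same route as the paper: the paper obtains this corollary precisely by applying the bialgebra morphism $\psi$ of Theorem \ref{thm:psi} to the product formula of Theorem \ref{thm:insold} and reading off coefficients in the Schur basis. The only difference is that you spell out the (routine) bijection between tableaux $T'\in T(T_1\shuffle T_2)$ of shape $\nu$ and skew standard Young tableaux of shape $\nu/\lambda$ Knuth equivalent to $T$, which the paper leaves implicit.
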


\begin{corollary} \cite[Theorem 5.4.5]{LTh} \label{cor:skewLRold}
 Let $S$ be a standard Young tableau of shape $\nu$. Then the coefficient $c_{\lambda, \mu}^{\nu}$ in the decomposition 
is equal to the number of standard Young tableaux $R$
of skew shape $\lambda \oplus \mu$ such that $\r(R) \approx S$.
\end{corollary}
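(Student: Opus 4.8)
The plan is to obtain the corollary by pushing the coproduct identity of Theorem~\ref{thm:cohecktoinsold} through the bialgebra morphism $\psi$ and then re-encoding the resulting pairs of straight-shape tableaux as single skew tableaux.

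Since $\psi$ is a bialgebra morphism (Theorem~\ref{thm:psi}), it is in particular a coalgebra morphism, so $\Delta\circ\psi=(\psi\otimes\psi)\circ\Delta$, where the left $\Delta$ is the coproduct of $\Lambda$ and the right one that of $PR$. Applying both sides to ${\bf S}$, using $\psi({\bf S})=s_\nu$ and Theorem~\ref{thm:cohecktoinsold}, and noting that $\overline{T'}$ and $\overline{T''}$ have the same shapes as $T'$ and $T''$, I obtain
\[
\Delta(s_\nu)=\sum_{(T',T'')\in T(S)} s_{\lambda(T')}\otimes s_{\lambda(T'')}.
\]
On the other hand, by standard facts about the Hopf algebra $\Lambda$ (see \cite{Zel}) one has $\Delta(s_\nu)=\sum_{\lambda,\mu}c_{\lambda,\mu}^{\nu}\,s_\lambda\otimes s_\mu$, with exactly the structure constants $c_{\lambda,\mu}^{\nu}$ appearing in the product expansion $s_\lambda s_\mu=\sum_\nu c_{\lambda,\mu}^{\nu}s_\nu$. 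Since the elements $s_\lambda\otimes s_\mu$ form a basis of $\Lambda\otimes\Lambda$, comparing coefficients shows that $c_{\lambda,\mu}^{\nu}$ equals the number of pairs $(T',T'')\in T(S)$ with $\lambda(T')=\lambda$ and $\lambda(T'')=\mu$; if $|\lambda|+|\mu|\neq|\nu|$ both sides are visibly zero.

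Next I would exhibit a bijection between these pairs and the standard Young tableaux $R$ of skew shape $\lambda\oplus\mu$ with $\r(R)\approx\r(S)$. Here $\lambda\oplus\mu$ is the disconnected skew shape consisting of a copy of $\lambda$ in the lower-left and a copy of $\mu$ in the upper-right, sharing no row and no column; this orientation is forced by requiring the row reading word to factor in the same order as the tensor factors in Theorem~\ref{thm:cohecktoinsold}. Given such an $R$, restrict it to its two connected components to get a straight-shape tableau $T'$ of shape $\lambda$ and a straight-shape tableau $T''$ of shape $\mu$; conversely, given $(T',T'')$, fill the $\lambda$-component of $\lambda\oplus\mu$ with $T'$ and the $\mu$-component with $T''$. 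Because the two components share no row or column, the assembled filling is increasing along rows and columns exactly when $T'$ and $T''$ are, and its entries are exactly $[\,|\lambda|+|\mu|\,]$ exactly when the entry sets of $T'$ and $T''$ partition $[\,|\lambda|+|\mu|\,]$ --- which is automatic for $(T',T'')\in T(S)$, since Knuth equivalence preserves content. Finally, since the $\lambda$-component lies below the $\mu$-component, $\r(R)=\r(T')\,\r(T'')$, so the condition $\r(R)\approx\r(S)$ is precisely the condition defining $T(S)$. Combining this bijection with the count from the previous step proves the corollary.

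I expect the only real obstacle to be the bookkeeping in the last step: one must fix the orientation convention for $\lambda\oplus\mu$ so that the factorization of the reading word lands on the correct side (matching the order of $\overline{T'}$ and $\overline{T''}$ in Theorem~\ref{thm:cohecktoinsold}), and then check that restriction and reassembly are mutually inverse. The conceptual content --- that $\psi$ carries the Poirier--Reutenauer coproduct to the coproduct on $\Lambda$ --- is already supplied by Theorems~\ref{thm:cohecktoinsold} and~\ref{thm:psi}.
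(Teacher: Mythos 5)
Your proposal is correct and follows exactly the route the paper intends: the paper derives Corollary \ref{cor:skewLRold} precisely by applying the bialgebra morphism $\psi$ of Theorem \ref{thm:psi} to the coproduct identity of Theorem \ref{thm:cohecktoinsold}, invoking the self-duality of $\Lambda$ in the Schur basis, and re-encoding pairs $(T',T'')\in T(S)$ as standard tableaux of the disconnected shape $\lambda\oplus\mu$. The paper leaves these steps implicit, and your write-up supplies them correctly, including the content-preservation and reading-word-factorization checks for the final bijection.
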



\subsection{$K$-theoretic Poirier-Reutenauer bialgebra and Littlewood-Richardson rule}

The combinatorics of the $K$-theory of Grassmannians has been developed in \cite{FG, FK, LS}. In \cite{B} Buch gave an explicit description
of the {\it {stable Grothendieck polynomials}}, which represent Schubert classes in the $K$-theory ring. Such a description was already implicit in \cite{FG}. 
Then Buch proceeded to give a Littlewood-Richardson rule,
which describes the structure constants of the ring with respect to the basis of those classes. An alternative description of those structure constants was 
obtained by Thomas and Yong in \cite{ThY2,ThY3}. 

In \cite{BKSTY}, a natural analogue of Knuth insertion called {\it {Hecke insertion}} is defined. A result of such insertion is an {\it {increasing tableau}}, which is a 
natural analogue of a standard Young tableau. 

A question arises then: can one use Hecke insertion 
to define a $K$-theoretic analogue of the Poirier-Reutenauer Hopf algebra? Can one then proceed to obtain a version of the Littlewood-Richardson rule analogous to 
Corollary \ref{cor:LRold} and Corollary \ref{cor:skewLRold}? It turns out the answer is yes, although there are additional obstacles to overcome. 
This is the goal of this paper.


It turns out that there is no {\it {local}} way to describe equivalence between words that Hecke insert into the same tableaux. This was, of course, already known 
in \cite{BKSTY}. The consequence is that the verbatim definition of the Poirier-Reutenauer bialgebra simply does not work. If for an increasing tableau $T$ we define 
$${\bf T} = \sum_{P(w) = T} w,$$ where the sum is over all words that Hecke insert into $T$,
the resulting sums are not closed under the natural product and coproduct, see Remark \ref{rem:prod} and Remark \ref{rem:coprod}.

We use instead classes defined by the 
\textit{$K$-Knuth equivalence relation} of \cite{BS}, a combination of the Hecke equivalence of \cite{B} and Knuth equivalence. The relation is defined 
by the following three local rules:

\begin{center}
\begin{tabular}{l c l}
 & $pp \equiv p$ & for all $p$ \vspace{.1in}\\
 & $pqp \equiv qpq$ & for all $p$ and $q$\vspace{.1in} \\
 & $pqs \equiv qps$ and $sqp \equiv spq$ & whenever $p<s<q$.
 \end{tabular}
\end{center}


It is important to note that the $K$-Knuth classes combine some classes of increasing tableaux, as seen in \cite{BS}. In other words, there are $K$-Knuth equivalence classes of words that have more than one
corresponding tableau. For example,
the $K$-Knuth equivalence class of $3124$ contains {\it {two}} increasing tableaux, shown below.
\begin{center}
 \begin{ytableau}
  1 & 2 & 4 \\
3 & 4
 \end{ytableau}\hspace{1in}
\begin{ytableau}
  1 & 2 & 4 \\
3 
 \end{ytableau}\hspace{.1in}.
\end{center}
We invite the reader to verify that the row reading words of those tableaux can be indeed connected to each other by $K$-Knuth equivalences. 

In order to get a working version of the Littlewood-Richardson rule, such tableaux need to be avoided. We use the notion of a \textit{unique rectification target} of Buch 
and Samuel, see \cite{BS}, which are increasing tableaux with the property of being the only increasing tableau in their $K$-Knuth equivalence class. We will refer to a unique rectification target 
as a URT.

Finally, armed with this notion of unique rectification targets, we can state and prove the following versions of the Littlewood-Richardson rule, similar to those of 
Corollary \ref{cor:LRold} and Corollary \ref{cor:skewLRold}. The first was proven previously in \cite[Corollary 3.19]{BS} and in less generality in \cite[Theorem 1.2]{ThY2} 
using a K-theoretic analogue of jeu de taquin. 

\begin{theorem*} [Theorem \ref{thm:LR}]
 Let $T$ be a URT of shape $\mu$. Then the coefficient $c_{\lambda, \mu}^{\nu}$ in the decomposition 
$$G_{\lambda} G_{\mu} = \sum_{\nu} (-1)^{|\nu|-|\lambda|-|\mu|} c_{\lambda, \mu}^{\nu} G_{\nu}$$ is equal to the number of increasing tableaux $R$
of skew shape $\nu/\lambda$ such that $P(\r(R))=T$.
\end{theorem*}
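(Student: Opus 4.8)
The plan is to build a $K$-theoretic Poirier--Reutenauer bialgebra and then run the same argument that deduces Corollary~\ref{cor:LRold} from Theorems~\ref{thm:insold} and \ref{thm:psi}. Let $\mathcal K$ be the free module spanned by formal sums ${\bf T}=\sum_{w\equiv\r(T)}w$, one for each $K$-Knuth equivalence class, indexed by an increasing tableau $T$ in that class; when $T$ is a URT I would normalize ${\bf T}=\sum_{w\equiv\r(T)}(-1)^{\ell(w)-|\lambda(T)|}w$ so that the lowest term $\r(T)$ has coefficient $+1$. I would give $\mathcal K$ the shifted shuffle product $\ast$ and the deconcatenation coproduct $\Delta$ copied verbatim from the classical definitions, and the first task is to prove $\mathcal K$ is closed under both. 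Since the three $K$-Knuth relations are local, this is a finite case analysis of how each relation interacts with shuffling and with splitting a word into a prefix and a suffix; two auxiliary facts I would record are that the underlying alphabet of a word is a $K$-Knuth invariant, and that restricting a word to an initial segment of the alphabet carries $K$-Knuth moves to $K$-Knuth moves.

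The next step is the pair of structural results that drive the machine. The first is a $K$-analogue of Theorem~\ref{thm:insold}: I expect that for URTs $T_1,T_2$ one has
\[
{\bf T_1}\ast{\bf T_2}\;=\;\sum_{T}(-1)^{|\lambda(T)|-|\lambda(T_1)|-|\lambda(T_2)|}\,{\bf T},
\]
the sum running over those tableaux $T$ whose restriction to the small letters equals $T_1$ and the shift-down of whose restriction to the large letters Hecke-inserts to $T_2$. The proof is a bookkeeping of the coefficient of a fixed word $u$ in the shuffle: the two alphabet-subwords of $u$ are determined by $u$ itself, so the shuffle multiplicity is $1$, and the sign collapses because $\ell(u)$ splits as the sum of the two subword lengths. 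Crucially, because $T_1,T_2$ are URTs, ``Hecke-inserts to $T_i$'' is a class-level condition (it is the same as ``$K$-Knuth equivalent to $\r(T_i)$''), so by the restriction observation above the coefficient of $u$ depends only on the class of $u$, and the right-hand side is genuinely a combination of the ${\bf T}$'s. The second result is a $K$-analogue of Theorem~\ref{thm:psi}: the assignment ${\bf T}\mapsto G_{\lambda(T)}$ on URT classes extends to a bialgebra morphism $\psi$ from $\mathcal K$ to a suitable completion of the ring $\Gamma$ of stable Grothendieck polynomials. For this I would lean on Buch's quasi-symmetric expansion of $G_\mu$ from \cite{B}, which is exactly of the shape $\sum_{w\equiv\r(T_\mu)}(-1)^{\ell(w)-|\mu|}L_{\mathrm{Des}(w)}$ for the superstandard URT $T_\mu$; this identifies $\psi$ with descent-quasi-symmetrization at the level of words, after which compatibility with $\ast$ and $\Delta$ is formal.

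Then I would conclude exactly as Corollary~\ref{cor:LRold} follows from Theorems~\ref{thm:insold} and \ref{thm:psi}. By \cite{BS} there is a URT $T_\lambda$ of shape $\lambda$ (for instance the superstandard tableau, for which the small alphabet is $[\,|\lambda|\,]$). Applying $\psi$ to the product identity with $T_1=T_\lambda$ and $T_2=T$ gives $G_\lambda G_\mu$ on one side and, on the other, $\sum_{T'}(-1)^{|\lambda(T')|-|\lambda|-|\mu|}G_{\lambda(T')}$ summed over the tableaux $T'$ (of shape $\nu$, say) occurring in ${\bf T_\lambda}\ast{\bf T}$. By the restriction conditions each such $T'$ is the superposition of $T_\lambda$ on the cells of $\lambda$ with an increasing tableau $R$ of skew shape $\nu/\lambda$ (shifted up by $|\lambda|$) such that $P(\r(R))=T$, and this superposition is a bijection onto all such $R$; since $|\nu|=|\lambda|+|R|$ and $|R|\ge|\mu|$, the exponent $|\nu|-|\lambda|-|\mu|$ is nonnegative and matches the sign in the statement. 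Comparing coefficients of $G_\nu$ on the two sides, against $G_\lambda G_\mu=\sum_\nu(-1)^{|\nu|-|\lambda|-|\mu|}c_{\lambda,\mu}^{\nu}G_\nu$, yields $c_{\lambda,\mu}^{\nu}=\#\{R\text{ increasing of shape }\nu/\lambda:\ P(\r(R))=T\}$.

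The main obstacle is the failure of locality emphasized in the introduction and in \cite{BKSTY}: a product of URT classes need not be a combination of URT classes, and a single $K$-Knuth class can carry several increasing tableaux of different shapes — already ${\bf T}\ast{\bf T}$ with $T$ the one-row tableau on $\{1,2\}$ runs into the class of $3124$, which as noted above contains two increasing tableaux, so the ``coefficient of ${\bf T}$'' in the displayed identity is not even well posed on the nose for those classes. Thus the genuinely delicate point, and where I expect the bulk of the work to sit, is making the bialgebra $\mathcal K$, the map $\psi$, and the product rule precise in the presence of such multi-tableau classes, and checking that when $\psi$ is applied to ${\bf T_\lambda}\ast{\bf T}$ the contributions of those classes reassemble so as not to change the coefficient of any $G_\nu$ — equivalently, that the superposition bijection above accounts for the full structure constant. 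I anticipate that this is precisely where Buch--Samuel's theory of unique rectification targets enters: the URT hypothesis on $T$, together with the freedom to take $T_\lambda$ a URT, should be exactly what makes the relevant rectifications order-independent and the count clean.
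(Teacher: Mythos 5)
Your overall architecture --- a $K$-theoretic Poirier--Reutenauer bialgebra, a product formula for URT classes, a bialgebra morphism to a completion of symmetric functions, and a coefficient comparison --- is the paper's. But the specific route you chose for the morphism $\psi$ has a concrete error. The expansion you want to ``lean on,''
$G_\mu=\sum_{w\equiv\r(T_\mu)}(-1)^{\ell(w)-|\mu|}L_{\mathcal{C}(w)}$, is false. Already for $\mu=(1)$: the relevant class is $\{1,11,111,\dots\}$ with $\mathcal{C}(1^k)=(k)$ and $L_{(k)}=h_k$, so the right side is $h_1-h_2+h_3-\cdots$, whereas $G_{(1)}=\sum_{k\ge 1}(-1)^{k-1}e_k=e_1-e_2+e_3-\cdots$. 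The reason is that sequences $i_1\le\cdots\le i_l$ compatible with the descents of $w$ record \emph{weak} set-valued recording tableaux (multisets allowed in a box), not Buch's set-valued tableaux; descent-quasisymmetrization of an insertion class therefore produces the weak stable Grothendieck polynomial $J_\lambda$ (Theorem \ref{thm:Pp}), not $G_\lambda$. This is exactly why the paper keeps the bialgebra and the morphism $\phi$ unsigned, proves $\phi([[h]])=\sum_{\r(T)\equiv h}J_{\lambda(T)}$ (Theorem \ref{thm:phi(h)}), and converts $J$-structure constants into $G$-structure constants only at the very last step via $x_i\mapsto -x_i/(1-x_i)$ (Corollary \ref{cor:coincide}); the sign $(-1)^{|\nu|-|\lambda|-|\mu|}$ comes from that substitution, not from signs carried inside the bialgebra.

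The second gap is one you flag yourself but do not close, and it cannot be left open: your signed normalization of ${\bf T}$ and the signed product formula are not well posed on multi-tableau classes. In the class of $3124$ from Example \ref{ex:3124} the two increasing tableaux have sizes $4$ and $5$, so even the parity of $|\lambda(T)|$ is ambiguous, and such classes do occur in products of URT classes. The paper's resolution is to index the right-hand side of the product formula by \emph{insertion} classes rather than $K$-Knuth classes (Theorem \ref{thm:ins}): the product of two URT classes decomposes exactly into sets $\{w:P(w)=Y\}$, and each such set --- whether or not $Y$ is a URT --- maps to a single $J_{\lambda(Y)}$ under $\phi$. With that, the coefficient of $J_\nu$ is literally the number of insertion tableaux $Y$ of shape $\nu$ restricting correctly, i.e.\ of skew tableaux $R$ with $P(\r(R))=T$, with no cancellation or reassembly to check. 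Your superposition bijection at the end is the right combinatorial picture, but without these two repairs the identity to which you apply $\psi$ is not established, and $\psi$ itself does not land on the $G_\lambda$'s.
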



While we obtain the next result only for unique rectification targets, \cite[Theorem 1.4]{ThY3} proves it for arbitrary increasing tableaux.
\begin{theorem*} [Theorem \ref{thm:skewLR}]
 Let $T_0$ be a URT of shape $\nu$. Then the coefficient $d_{\lambda, \mu}^{\nu}$ in the decomposition 
$$\Delta(G_{\nu}) = \sum_{\lambda, \mu} (-1)^{|\nu|-|\lambda|-|\mu|} d_{\lambda, \mu}^{\nu} G_{\lambda} \otimes G_{\mu}$$ is equal to the number of increasing tableaux $R$
of skew shape $\lambda \oplus \mu$ such that $P(\r(R))=T_0$.
\end{theorem*}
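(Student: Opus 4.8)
The plan is to obtain Theorem~\ref{thm:skewLR} exactly as Corollary~\ref{cor:skewLRold} is obtained from Theorem~\ref{thm:cohecktoinsold}: by pushing the coproduct of a suitable basis element of $KPR$ through the $K$-theoretic analogue of the bialgebra morphism $\psi$ of Theorem~\ref{thm:psi}. I take for granted the constructions of the preceding sections, namely: that $KPR$ is a bialgebra, with the shifted-shuffle product and the deconcatenation-plus-standardization coproduct, the standardization $st$ now being extended to words with repeated letters; that there is a morphism of bialgebras $\psi\colon KPR\to\widehat{\Lambda}$ into a completion of the ring of symmetric functions, sending the basis element $\mathbf{C}$ attached to a $K$-Knuth class $C$ to $\sum_{D}G_{\lambda(D)}$ summed over the increasing tableaux $D$ lying in $C$; and that unique rectification targets of any shape exist (Buch--Samuel, \cite{BS}).

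Fix a URT $T_0$ of shape $\nu$. Since $T_0$ is the only increasing tableau in its $K$-Knuth class, the words $K$-Knuth equivalent to $\r(T_0)$ are precisely the words $w$ with $P(w)=T_0$, so $\mathbf{T_0}$ is a well-defined basis element of $KPR$ and $\psi(\mathbf{T_0})=G_\nu$. The heart of the argument is to establish a $K$-theoretic analogue of Theorem~\ref{thm:cohecktoinsold}: an explicit expansion of $\Delta(\mathbf{T_0})$ in the tableau basis of $KPR\otimes KPR$ whose structure constants count increasing tableaux of disconnected skew shape $\lambda\oplus\mu$ whose reading words Hecke-insert to $T_0$. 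One proves such an identity by unwinding the definition of the coproduct: a word $w$ with $P(w)=T_0$ is cut as $w=uv$, and one must show that the $K$-Knuth classes of $st(u)$ and $st(v)$, together with the multiplicities with which the various tableau tensors appear, are governed by the combinatorics of cutting the reading word of a disconnected skew increasing tableau; the URT hypothesis on $T_0$ is what removes the ambiguity on the $T_0$-side and lets the bookkeeping of \cite{LTh} carry over.

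Granting this, apply $\psi\otimes\psi$. On the left, since $\psi$ is a bialgebra morphism, $(\psi\otimes\psi)\bigl(\Delta(\mathbf{T_0})\bigr)=\Delta\bigl(\psi(\mathbf{T_0})\bigr)=\Delta(G_\nu)=\sum_{\lambda,\mu}(-1)^{|\nu|-|\lambda|-|\mu|}d_{\lambda,\mu}^{\nu}\,G_\lambda\otimes G_\mu$. On the right, reading off the coefficient of $G_\lambda\otimes G_\mu$, using the combinatorial description of the structure constants from the previous paragraph together with the fact that the reading word of an increasing tableau of shape $\lambda\oplus\mu$ is a concatenation of the reading words of its two components (which share no row and no column), we recover the number of increasing tableaux $R$ of skew shape $\lambda\oplus\mu$ with $P(\r(R))=T_0$. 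The signs match by a degree count: $|\lambda|+|\mu|\ge|\nu|$, because Hecke insertion never increases the number of boxes.

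I expect the genuine difficulty to be the coproduct identity for $\mathbf{T_0}$, the $K$-analogue of Theorem~\ref{thm:cohecktoinsold}, for the reasons stressed in the introduction: $K$-Knuth equivalence admits no local characterization, so one cannot argue locally as in the classical proof, and $K$-Knuth classes are not in bijection with increasing tableaux, so the verbatim Poirier--Reutenauer argument breaks down. The remedy is once more the URT notion, and the step that should be isolated as a lemma is the assertion that cutting a word which Hecke-inserts to a URT and standardizing the two pieces yields a controlled pair of increasing tableaux whose $K$-Knuth classes occur with exactly the multiplicities of the straight-shape case. A secondary, routine but unavoidable, technical point is to set up $st$ and the coproduct on words with repeated letters and to verify that they stay compatible with $\psi$.
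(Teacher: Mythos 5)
There is a genuine gap, and it sits exactly where you placed your trust rather than where you placed your worry. The morphism you take for granted --- $\psi\colon KPR\to\widehat{\Lambda}$ sending the basis element of a $K$-Knuth class $C$ to $\sum_{D}G_{\lambda(D)}$ over the increasing tableaux $D$ in $C$ --- is not a bialgebra morphism, and no sign bookkeeping at the end can repair this. The product in $KPR$ expands any product of basis elements as a \emph{nonnegative} sum of basis elements (Theorem \ref{thm:PRprod}), so $\psi$ of a product is a nonnegative sum of $G_\lambda$'s; but the $G_\lambda$ themselves have alternating structure constants. Concretely, $[[1]]\cdot[[1]]$ contains the class $[[121]]$, whose unique tableau has shape $(2,1)$, so your $\psi([[1]]\cdot[[1]])$ contains $G_{(2,1)}$ with coefficient $+1$, whereas $\psi([[1]])^2=G_{(1)}^2=G_{(2)}+G_{(1,1)}-G_{(2,1)}$. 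The paper's resolution is to target the \emph{weak} stable Grothendieck polynomials instead: the morphism $\phi$ of Theorem \ref{thm:phi} is defined on words by $w\mapsto L_{\mathcal{C}(w)}$ (fundamental quasisymmetric functions indexed by descent compositions), Theorem \ref{thm:Pp} shows each insertion class maps to a single $J_{\lambda}$, hence Theorem \ref{thm:phi(h)} gives $\phi([[h]])=\sum_{\r(T)\equiv h}J_{\lambda(T)}$; the $J_\lambda$ have nonnegative structure constants equal to the unsigned $c_{\lambda,\mu}^{\nu}$ and $d_{\lambda,\mu}^{\nu}$, and only at the very last step does Corollary \ref{cor:coincide} (the substitution $J_{\lambda}(x)=(-1)^{|\lambda|}G_{\lambda}(-x/(1-x))$) convert the statement into one about the $G_\lambda$ with the sign $(-1)^{|\nu|-|\lambda|-|\mu|}$. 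Your argument needs this intermediate basis; without it the central equation $(\psi\otimes\psi)(\Delta(\mathbf{T_0}))=\Delta(G_\nu)$ is comparing a positive expansion to a signed one.

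On the step you flag as the genuine difficulty: the coproduct identity is the paper's Theorem \ref{thm:coins}, which follows from Theorem \ref{thm:cohecktoins} and ultimately from Theorem \ref{thm:PRcoprod}, and it is less delicate than you fear. The $K$-Knuth relations \emph{are} local rewriting rules (it is insertion equivalence, not $K$-Knuth equivalence, that lacks a local description), so deconcatenation sends a $K$-Knuth class to a sum of tensor products of $K$-Knuth classes essentially verbatim as in Poirier--Reutenauer, after checking that the relations commute with standardization; the URT hypothesis is then only needed to identify $[[\r(T_0)]]$ with the single insertion class of $T_0$ on the left-hand side, and at the end to guarantee each summand contributes $J_{\lambda(T')}\otimes J_{\lambda(T'')}$ via Theorem \ref{thm:Pp} applied to insertion classes (not to $K$-Knuth classes, which may contain several tableaux).
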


\begin{remark}
Let us make the relationship between the two previous theorems, the two theorems of Thomas and Yong: Theorem \cite[Theorem 1.4]{ThY3} and \cite[Theorem 1.4]{ThY2}, and the result of Buch and Samuel 
\cite[Corollary 3.19]{BS} clear. Their theorems are stated in terms of $K$-theoretic jeu de taquin, which is introduced by Thomas and Yong. In \cite{BS}, 
Buch and Samuel prove that $K$-Knuth equivalence is equivalent to $K$-theoretic jeu de taquin equivalence in \cite[Theorem 6.2]{BS}, thus explaining the connection.

Therefore, our Theorem \ref{thm:LR} is a corollary of \cite[Theorem 1.4]{ThY3}, where our theorem is more specialized since we require $T$ to be a URT. On the other 
hand, Theorem \ref{thm:skewLR} is the same as \cite[Corollary 3.10]{BS}, which both generalize \cite[Theorem 1.4]{ThY2}, as we allow $S$ to be an arbitrary URT rather than fixing a particular 
({\it {superstandard}}) choice. 

\end{remark}

In our proof of an analogue of Theorem \ref{thm:psi}, it is more natural to work with the {\it {weak set-valued tableaux}}
defined in \cite{LP} than with the set-valued tableaux of Buch \cite{B}. However, as we show in Corollary \ref{cor:coincide}, the two languages are equivalent. 


\subsection{Plan of the paper and acknowledgements}
In Section 2, we describe Hecke insertion and reverse Hecke insertion. We define the insertion tableau, $P(w)$, and the recording tableau, $Q(w)$, for a word $w$.
We review several relevant results regarding Hecke insertion. We then recall (from \cite{BS}) the \textit{$K$-Knuth equivalence} of finite words 
on the alphabet $\{1,2,3,\ldots\}$ and discuss certain characteristics of this equivalence.

In Section 3,  we define $[[h]]$ to be the sum of all words in the Hecke equivalence class of a word $h$. We define a vector space, $KPR$, 
spanned by all such sums. We introduce a bialgebra structure on $KPR$ and show that $KPR$ has no antipode for this bialgebra structure. Thus, we 
obtain the $K$-theoretic Poirier-Reutenauer bialgebra. 

In Section 4, we recall from \cite{BS} the notion of a \textit{unique rectification target} (URT), a tableau that is the unique tableau in its $K$-Knuth equivalence class. 
We rephrase the product and coproduct formulas from the previous section 
for $K$-Knuth equivalence classes that correspond to URTs.

In Section 5, we draw a connection between the material in the previous sections and the ring of symmetric functions. We define 
the \textit{stable Grothendieck polynomials}, $G_\lambda$, as in \cite{B} by using set-valued tableaux and discuss their structure constants. 
We then use \textit{weak set-valued tableaux} to define \textit{weak stable Grothendieck polynomials}, $J_\lambda$. We show that the 
bialgebra structure constants of the $G_\lambda$ and the $J_\lambda$ coincide up to a sign. Using the fundamental 
quasisymmetric functions, we define a bialgebra morphism, $\phi$, with the property that $\phi([[h]])$ can be written as a sum of weak 
stable Grothendieck polynomials. 

In Section 6, we use the bialgebra morphism from Section 5 to state and prove a Littlewood-Richardson rule for the product and coproduct of the stable Grothendieck polynomials. 

\

We are grateful to Oliver Pechenik, Alex Yong and Thomas Lam for helpful comments on the first draft of the paper. 

\section{Hecke insertion and the $K$-Knuth monoid}

\subsection{Hecke insertion}
 
An \textit{increasing tableau} is a filling of a Young diagram with positive integers such that the entries in rows are strictly
increasing from left to right and the entries in columns are strictly increasing
from top to bottom. 

\begin{example}
 The tableau shown on the left is an increasing tableau. The tableau on the right is not an increasing tableau
because the entries in the first row are not strictly increasing.
\begin{center}
\begin{ytableau}
 1 & 2 & 4 & 5 \\
2 & 3 & 5 & 7 \\
6 & 7 \\
8
\end{ytableau}\hspace{1in}
\begin{ytableau}
 1 & 2 & 2 & 4 \\
3 & 4 \\
5
\end{ytableau}
\end{center}
\end{example}

\begin{lemma} \label{lem:fin}
 There are only finitely many increasing tableaux filled with a given finite alphabet.
\end{lemma}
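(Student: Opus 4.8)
The plan is to bound the number of increasing tableaux filled from a fixed finite alphabet $A$ by controlling both the possible shapes and the possible fillings of each shape. First I would observe that the column-strictness condition forces the number of rows of any increasing tableau filled from $A$ to be at most $|A|$, since the entries down any column are strictly increasing positive integers drawn from $A$; likewise the row-strictness condition forces the number of columns to be at most $|A|$. Hence every such tableau fits inside the $|A| \times |A|$ square, and in particular its underlying Young diagram is one of the finitely many partitions $\lambda \subseteq (|A|^{|A|})$.

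Next, for each such fixed shape $\lambda$, the number of fillings of $\lambda$ with entries from $A$ is at most $|A|^{|\lambda|} \le |A|^{|A|^2}$, since there are $|\lambda|$ boxes and at most $|A|$ choices for each box; a fortiori the number of \emph{increasing} fillings of $\lambda$ is finite. Summing the (finite) count of increasing fillings over the (finitely many) admissible shapes $\lambda$ gives a finite total, which is the claim. One could also simply note that the set of all functions from the $\le |A|^2$ boxes of the ambient square into $A \cup \{\text{blank}\}$ is finite, and increasing tableaux filled from $A$ inject into it.

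There is no real obstacle here; the only point requiring a moment's care is the justification that an increasing tableau on alphabet $A$ cannot have more than $|A|$ rows or columns, which is immediate from strict monotonicity along rows and columns. Everything else is counting a subset of a finite set.
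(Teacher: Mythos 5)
Your proof is correct and follows the same route as the paper: strict increase along rows and columns bounds both dimensions by $|A|$, so every such tableau lives inside the $|A|\times|A|$ square, and the finitely many shapes each admit finitely many fillings. You simply spell out the counting that the paper leaves implicit.
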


\begin{proof}
 If the alphabet used has $n$ letters, each row and each column cannot be longer than $n$. 
\end{proof}

Of particular importance in what follows will be increasing tableaux on alphabets consisting of the first several
positive integers, i.e. on $[k] = \{1,2,\ldots,k\}$. We call such increasing tableaux {\it {initial}}.

We follow \cite{BKSTY} to give a description of Hecke (row) insertion of a positive integer $x$ 
into an increasing tableau $Y$ resulting in an increasing tableau $Z$.
The shape of $Z$ is obtained from the shape of $Y$ by adding at most one box. If a box is added in position $(i,j)$, then
we set $c=(i,j)$. 
In the case where no box is added, then $c=(i,j)$, where $(i,j)$ is a special corner indicating where the insertion
process terminated. We will use a parameter $\alpha\in\{0,1\}$ to keep track
of whether or not a box is added to $Y$ after inserting $x$ by setting
$\alpha=0$ if $c\in Y$ and $\alpha=1$ if $c\notin Y$. 
We use the notation $Z=(Y {\overset{H}{\longleftarrow}} x)$ to denote the resulting tableau, and we denote the 
outcome of the insertion by $(Z,c,\alpha)$.

We now describe how to insert $x$ into increasing tableau $Y$ by describing
how to insert $x$ into $R$, a row of $Y$. This insertion may 
modify the row and may produce an output integer, 
which we will insert into the next row. To begin the insertion process, insert $x$ into the first row of $Y$.  
The process stops when there is no output integer. 
The rules for insertion of $x$ into $R$ are as follows:
\begin{itemize}
\item[(H1)] If $x$ is weakly larger than all integers in $R$ and adjoining $x$ to the end of row $R$ 
results in an increasing tableau, then $Z$ is the resulting tableau and $c$ is the new
corner where $x$ was added.

\item[(H2)] If $x$ is weakly larger than all integers in $R$ and adjoining $x$ to the end of row $R$ 
does not result in an increasing tableau, then $Z=Y$, and $c$ is the 
box at the bottom of the column of $Z$ containing the rightmost box of the row $R$. 
\end{itemize}
For the next two rules, assume $R$ contains boxes strictly larger than $x$, and let $y$ be the smallest such box.
\begin{itemize}
\item[(H3)] If replacing $y$ with $x$ results in an increasing tableau, then replace $y$ with $x$. 
In this case, $y$ is the output integer to be inserted into the next row
\item[(H4)] If replacing $y$ with $x$ does not result in an increasing tableau, then do not change row $R$. 
In this case, $y$ is the output integer to be inserted into the next row.
\end{itemize}

\begin{example}\text{ }\\
\begin{center}
\begin{ytableau}
  $1$ & $2$ & $3$ & $5$ \\
$2$ & $3$ & $4$ & $6$ \\
$6$ \\
$7$ 
 \end{ytableau}\text{ }${\overset{H}{\longleftarrow}} 3$\text{ } $=$ \text{ }
\begin{ytableau}
  $1$ & $2$ & $3$ & $5$ \\
$2$ & $3$ & $4$ & $6$ \\
$6$ \\
$7$
 \end{ytableau} \end{center}
We use rule (H4) in the first row to obtain output integer $5$. Notice that the $5$ cannot replace the $6$ 
in the second row since it would be directly below the $5$ in the first row. 
Thus we use (H4) again and get output integer $6$. Since we cannot add this $6$ to the end of the third row, 
we use (H2) and get $c=(1,4)$. Notice that the shape did not
change in this insertion, so $\alpha=0$.
\end{example}
\begin{example}  \label{ex:ins2} \text{ }\\
\begin{center}
\begin{ytableau} 
 $2$ & $4$ & $6$ \\
$3$ & $6$ & $8$ \\
$7$
\end{ytableau}\text{ }${\overset{H}{\longleftarrow}} 5$\text{ } $=$ \text{ }
\begin{ytableau} 
 $2$ & $4$ & $5$ \\
$3$ & $6$ & $8$ \\
$7$ & $8$
\end{ytableau}
\end{center}
The integer $5$ bumps the $6$ from the first row using (H3). The $6$ is inserted into the second row, 
which already contains a $6$. Using (H4), the second row remains unchanged
and we insert $8$ into the third row. Since $8$ is larger than everything in the third row, we use (H1) 
to adjoin it to the end of the row. Thus $\alpha=1$.
\end{example}

In \cite{BKSTY}, Buch, Kresch, Shimozono, Tamvakis, and Yong give the following algorithm for reverse Hecke 
insertion starting with the triple $(Z,c,\alpha)$ as described above and ending with a pair $(Y,x)$ consisting of an 
increasing tableau and a postive integer. 

\begin{itemize}
\item[(rH1)] If $y$ is the cell in square $c$ of $Z$ and $\alpha=1$, 
then remove $y$ and reverse insert $y$ into the row above.

\item[(rH2)] If $\alpha=0$, do not remove $y$, 
but still reverse insert it into the row above.
\end{itemize}
In the row above, let $x$ be the largest integer 
such that $x<y$. 

\begin{itemize}
\item[(rH3)] If replacing $x$ with $y$ results in an 
increasing tableau, then we replace $x$ with $y$ and reverse insert $x$ into the row above.

\item[(rH4)] If replacing
$x$ with $y$ does not result in an increasing tableau, leave the row unchanged and reverse insert $x$ into the row
above.

\item[(rH5)] If $R$ is the first row of the tableau, the final output consists of $x$ and the modified tableau.
\end{itemize}

\begin{theorem} \cite[Theorem 4]{BKSTY}\label{thm:insbijection}
 Hecke insertion $(Y,x)\mapsto (Z,c,\alpha)$ and reverse Hecke insertion $(Z,c,\alpha)\mapsto (Y,x)$ define
mutually inverse bijections between the set of pairs consisting of an increasing tableau and a positive integer and the
set of triples consisting of an increasing tableau, a corner cell of the increasing tableau, and $\alpha\in\{0,1\}$.
\end{theorem}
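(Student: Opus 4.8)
The statement to prove is Theorem~\ref{thm:insbijection}: that Hecke insertion and reverse Hecke insertion are mutually inverse bijections. The plan is to verify that the two maps are well-defined between the claimed sets, and then that composing them in either order yields the identity.

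First I would check well-definedness. For Hecke insertion $(Y,x)\mapsto(Z,c,\alpha)$ I need to confirm that each of rules (H1)--(H4) preserves the property of being an increasing tableau, that exactly one rule applies in each row, that the process terminates (it can only move downward row by row, and by Lemma~\ref{lem:fin} there are finitely many rows since the alphabet is finite), and that the resulting $c$ is a genuine corner cell of $Z$ with $\alpha=1$ precisely when a box is added. The subtle point is the definition of $c$ in case (H2): one must check that the box at the bottom of the column containing the rightmost box of $R$ is indeed a corner of $Z=Y$; this follows because (H2) fires only when adjoining $x$ to the end of $R$ would violate strict column-increase, which forces that column to be the last column of the rectangle occupied in that region. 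Symmetrically, for reverse Hecke insertion $(Z,c,\alpha)\mapsto(Y,x)$ I would check that rules (rH1)--(rH5) preserve increasingness, that the reverse bumping path moves strictly upward, and that the output $x$ together with the modified tableau $Y$ form a valid pair — in particular that when $\alpha=0$ the shape is genuinely unchanged and when $\alpha=1$ exactly the cell $c$ is removed.

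Next I would prove the two compositions are identities. The cleanest approach is a row-by-row (or rather, box-by-box along the bumping path) local analysis: show that the reverse insertion path starting from $(Z,c,\alpha)$ visits exactly the same cells, in reverse order, as the forward insertion path that produced $(Z,c,\alpha)$ from $(Y,x)$, and that at each cell the reverse rule (rHk) undoes the effect of the corresponding forward rule (Hk). Concretely: rule (H1) is undone by (rH1) at the added corner; (H2) is undone by (rH2) starting at the designated column-bottom corner with $\alpha=0$; within the tableau, a forward step of type (H3) (an actual replacement $y\mapsto x$ producing output $y$) is undone by a step of type (rH3), while a forward step of type (H4) (no change, output $y$) is undone by a step of type (rH4). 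The key lemma to isolate is that the forward insertion path is \emph{left-weakly-decreasing} in column index (equivalently the outputs are weakly increasing down the rows) and that the reverse path reconstructs this same path; once one knows the paths coincide, matching the rules is a finite case check. Then one argues the other direction: starting from a triple $(Z,c,\alpha)$, apply reverse insertion to get $(Y,x)$, and show forward insertion of $x$ into $Y$ retraces the reverse path and returns $(Z,c,\alpha)$, using the same local correspondence of rules. Since both composites are the identity on their respective (finite, by Lemma~\ref{lem:fin}) sets, the maps are mutually inverse bijections.

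The main obstacle I expect is the bookkeeping around the boundary cases (H2)/(rH2) and (H1)/(rH1): one must be careful that the choice of $c$ in (H2) — a column-bottom corner that is \emph{not} at the end of the bumping path in the naive sense — is exactly the cell from which (rH2) should launch the reverse process, and that the reverse process then correctly identifies that the insertion was "absorbed" (i.e. recovers $\alpha=0$ and the unchanged shape). The interior rules (H3)/(H4) versus (rH3)/(rH4) require checking that "replacing $y$ by $x$ keeps the tableau increasing" on the forward side corresponds precisely to "replacing $x$ by $y$ keeps the tableau increasing" on the reverse side, which is a symmetric but slightly delicate comparison of inequalities with the cells directly above and below. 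Since the result is quoted from \cite[Theorem 4]{BKSTY}, in the paper one can reasonably defer the full verification to that reference and only sketch the local rule-matching.
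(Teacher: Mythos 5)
The paper does not actually prove this statement: it is imported verbatim as \cite[Theorem 4]{BKSTY}, so there is no internal argument to compare yours against, and deferring to that reference (as you suggest at the end) is exactly what the authors do. Your sketch is nonetheless a reasonable outline of how the proof goes in \cite{BKSTY} — well-definedness of both maps plus a step-by-step matching of each rule (Hk) against its counterpart (rHk) along the bumping path. Two caveats. First, your central claim that the reverse path ``visits exactly the same cells, in reverse order, as the forward insertion path'' is literally false in the $\alpha=0$ case: when forward insertion terminates by (H2) in some row $R$, the designated cell $c$ is the bottom of a column possibly several rows \emph{below} $R$, so $c$ does not lie on the forward bumping path at all; the reverse process launched from $c$ by (rH2) must first climb through those intermediate rows, and one has to verify it passes through them without modification before reaching $R$ and beginning to undo the (H3)/(H4) steps. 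You do flag this as the main obstacle, but the path-coincidence lemma as you state it would need to be weakened to accommodate it rather than merely checked. Second, the appeal to Lemma~\ref{lem:fin} for finiteness of the two sets is both unnecessary and incorrect: the set of pairs (increasing tableau, positive integer) is infinite, but mutual inverseness of the two composites already yields the bijection without any finiteness hypothesis.
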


Buch, Kresch, Shimozono, Tamvakis, and Yong prove the following lemma about Hecke insertion, which
will be useful later.

\begin{lemma} \cite[Lemma 2]{BKSTY}\label{lem:below}
 Let $Y$ be an increasing tableau and $x_1,x_2$ be two positive integers. Suppose that Hecke insertion 
of $x_1$ into $Y$ results in $(Z,c_1)$ and Hecke insertion
of $x_2$ into $Z$ results in $(T,c_2)$. Then $c_2$ is strictly below $c_1$ if and only if $x_1>x_2$.
\end{lemma}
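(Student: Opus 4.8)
The plan is to prove this ``$K$-theoretic row bumping lemma'' by the classical strategy of chasing the two insertion routes down the tableau, now adapted to the four Hecke rules (H1)--(H4). First I would reduce the ``if and only if'' to two implications: that $x_1 > x_2$ forces $c_2$ to lie strictly below $c_1$, and that $x_1 \le x_2$ forces $c_2$ to lie weakly above $c_1$ (equivalently, not strictly below it). Since for any two positive integers exactly one of $x_1 > x_2$ and $x_1 \le x_2$ holds, and since ``strictly below'' and ``not strictly below'' are complementary, these two implications together are equivalent to the lemma.

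To organize the chase I would attach to any Hecke insertion its \emph{bumping route}: the list of rows $1, 2, \dots, \ell$ that receive an incoming letter, and for each such row a \emph{pivot column} --- the column in which the incoming letter lands under (H1) or (H3), or the column of the entry $y$ under (H4), or the column of the rightmost box of the row under (H2) --- together with the letter passed to the next row when there is one. The route stops in row $\ell$ either by (H1), adjoining a new box at the pivot column, or by (H2), in which case $c$ is obtained by dropping to the bottom of the pivot column. Exactly as in ordinary RSK, along a single route the pivot columns are weakly decreasing, and in the rows where a genuine bump occurs the letters passed down strictly increase; the two features with no classical analogue are that (H4) passes a letter down while leaving the row untouched, and that (H2) terminates with a downward drop along a column.

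The core of the argument is an invariant comparing the route $\rho_1$ of $x_1$ into $Y$ with the route $\rho_2$ of $x_2$ into $Z$, established by induction down the rows. When $x_1 > x_2$ I would maintain that every row visited by $\rho_1$ is also visited by $\rho_2$, that in each such row the pivot column of $\rho_2$ is weakly to the left of that of $\rho_1$, and that the letter $\rho_2$ passes to the next row is strictly smaller than the one $\rho_1$ passes down. The last clause is what stops $\rho_2$ from halting as soon as $\rho_1$ does: it keeps $\rho_2$ active, carrying a letter too small to be adjoined where $\rho_1$ terminated, so $\rho_2$ must continue into a strictly lower row --- and in particular below the cell $c_1$ produced by a possible (H2) drop, since that drop stays inside the column through which $\rho_2$ has already been forced to pass. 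When $x_1 \le x_2$ I would maintain the mirror invariant --- every row visited by $\rho_2$ is visited by $\rho_1$, the pivot column of $\rho_2$ is strictly to the right of that of $\rho_1$, and $\rho_2$ passes down a weakly larger letter --- which pins $\rho_2$ to terminate no lower than $\rho_1$.

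Checking that these invariants persist from one row to the next means working through the table of possibilities for which of (H1)--(H4) each of $\rho_1, \rho_2$ applies in the current row, together with the increasing-tableau side conditions that distinguish, say, (H3) from (H4); this is the bulk of the proof. The step I expect to be the real obstacle is the mixed case in which $\rho_1$ has just stopped via (H2) while $\rho_2$ is still running: here one must rule out the coincidence $c_2 = c_1$ and show that $\rho_2$'s corner lands strictly below the possibly deep cell to which $\rho_1$'s drop descends, which is the one place where the interaction of (H2) with an ongoing route has to be analyzed directly rather than imported from the classical argument. Theorem~\ref{thm:insbijection} is useful here for keeping the bookkeeping honest and, in some cases, for deducing one implication from the other by running the insertions backwards. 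The base of the induction --- the first row, where both routes begin --- is immediate once the invariant is correctly stated.
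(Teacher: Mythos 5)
The paper does not actually prove this lemma: it is imported verbatim as \cite[Lemma 2]{BKSTY}, so there is no internal argument to compare yours against. Judged on its own terms, your proposal is a reasonable outline of the standard bumping-route strategy (and of the sort of case analysis Buch--Kresch--Shimozono--Tamvakis--Yong carry out), and it correctly identifies the genuinely delicate points, namely the interaction with (H2) and (H4). But it is an outline, not a proof: you explicitly defer ``the bulk of the proof'' (the row-by-row verification of the invariants across all combinations of (H1)--(H4)) and you flag the mixed case where $\rho_1$ terminates by (H2) while $\rho_2$ is still running as ``the real obstacle'' without resolving it. Those deferred steps are the entire content of the lemma.

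Moreover, at least one of your invariants does not propagate by the purely local, one-row-at-a-time induction you describe. Suppose $\rho_1$ enters row $i$ carrying $a_1$ and applies (H4) because the entry $u$ directly \emph{above} the bumped entry $y_1$ satisfies $u\ge a_1$, while the left neighbor of $y_1$ is $\le a_2$. Then the row is unchanged and there need be no entry of row $i$ in the interval $(a_2,a_1]$, so the naive computation gives $y_2=y_1$: the two routes would pass down \emph{equal} letters, destroying your ``strictly smaller'' clause and, with it, the mechanism that forces $\rho_2$ below $c_1$. Ruling this configuration out is possible, but it requires the column component of the invariant in an essential way (one shows that an above-obstruction for $\rho_1$ at row $i$ forces $\rho_1$ to have applied (H4) in the same column in row $i-1$, and that the hypothesis $j_2^{(i-1)}\le j_1^{(i-1)}$ together with $a_2^{(i)}<a_1^{(i)}$ then makes the left neighbor of $y_1$ exceed $a_2$ after all); nothing in your sketch records why the letter and column clauses reinforce each other in exactly this way. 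Until that verification, the (H1)/(H2) termination analysis, and the converse direction $x_1\le x_2$ (where, note, the ``strictly to the right'' clause already fails for $x_1=x_2$ even though the conclusion holds) are written out, the safe course is to do what the paper does and cite \cite[Lemma 2]{BKSTY}.
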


Define the {\it {row reading}} word of an increasing tableau $T$, $\r(T)$, to be its content read left to right in each row, starting from the bottom row 
and ending with the top row. 

\begin{example}
 The second tableau in Example \ref{ex:ins2} has the reading word $78368245$.
\end{example}

Suppose $w=w_1w_2\ldots w_n$ is a word. Its \textit{insertion tableau} is $$P(w)=(\ldots((\emptyset {\overset{H}{\longleftarrow}} w_1){\overset{H}{\longleftarrow}} w_2)\ldots {\overset{H}{\longleftarrow}} w_n).$$

We shall also need the following two lemmas.

\begin{lemma} \label{lem:restrict}
 If $P(w)=T$ then $P(w)|_{[k]} = P(w|_{[k]})=T|_{[k]}$. 
\end{lemma}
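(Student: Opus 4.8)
The plan is to prove the statement by induction on the length of $w$, analyzing how a single Hecke insertion step interacts with the restriction operation $\cdot|_{[k]}$. Write $w = w'x$ where $w'$ has length $n-1$, and let $Y = P(w')$, so that $T = (Y \overset{H}{\longleftarrow} x)$. By the inductive hypothesis, $P(w')|_{[k]} = P(w'|_{[k]}) = Y|_{[k]}$. I need to compare two things: the restriction $T|_{[k]}$, and the tableau obtained by inserting $w|_{[k]}$. There are two cases. If $x > k$, then $w|_{[k]} = w'|_{[k]}$, so I must show $T|_{[k]} = Y|_{[k]}$; in other words, inserting a letter strictly larger than $k$ does not disturb the subtableau on entries $\leq k$. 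This should follow because the bumping path of $x$ consists only of entries that are $\geq x > k$ (each output integer in rules (H3), (H4) is the bumped entry $y > x$, hence stays $> k$), and rules (H1), (H2) only add or touch boxes at the ends of rows in positions that lie weakly to the right and weakly below the $[k]$-subtableau. If $x \leq k$, then $w|_{[k]} = (w'|_{[k]})x$, and I must show $T|_{[k]} = (Y|_{[k]} \overset{H}{\longleftarrow} x) = (P(w'|_{[k]}) \overset{H}{\longleftarrow} x)$, i.e.\ that Hecke insertion commutes with restriction when the inserted letter survives the restriction.

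The heart of the argument is this second case: I would show that the bumping path of $x$ in $Y$, restricted to its entries that are $\leq k$, is exactly the bumping path of $x$ in $Y|_{[k]}$, and that the rule applied ((H1)--(H4)) at each relevant row is the same in both tableaux. The key observation is that within a single row $R$ of $Y$, the entries $> k$ all lie to the right of the entries $\leq k$ (since the row is strictly increasing), and the smallest entry of $R$ strictly larger than the current insertion value $x$ is $\leq k$ precisely when it is not past all the $\leq k$ entries. So as long as the insertion value stays $\leq k$ — which it does, since any bumped entry $y$ satisfies $y > x$ but the output integer fed to the next row is compared against whether the outcome is increasing, and one checks the output stays $\leq k$ exactly while the path is in the $[k]$-region — the insertion sees only the $[k]$-part of each row and behaves identically. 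When the insertion value would become $> k$ (or the path falls off the bottom of the $[k]$-subtableau), the remainder of the path in $Y$ lies entirely in the $> k$ region and in rows/columns outside the $[k]$-subshape, so it does not affect $T|_{[k]}$; correspondingly, in $Y|_{[k]}$ the insertion terminates via (H1) or (H2) at the matching location. A careful case check of (H1)--(H4) against the position of the box $y$ relative to the $[k]$-boundary, together with Lemma~\ref{lem:below} to control where new boxes land, closes this.

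The main obstacle I anticipate is the bookkeeping around rules (H2) and (H4), where no box is bumped or added but the "special corner" $c$ is recorded at the bottom of a column: I must make sure that when the $K$-theoretic absorption happens (a repeated entry, or a would-be non-increasing adjunction) on an entry $\leq k$, the restricted tableau $Y|_{[k]}$ exhibits the same absorption, and conversely that absorptions involving entries $> k$ are invisible to the restriction. This requires knowing that $T|_{[k]}$ really is an increasing tableau of a legitimate shape — which is automatic since $T$ is — and that the shape of $T|_{[k]}$ differs from that of $Y|_{[k]}$ by at most one box, located exactly where $(Y|_{[k]} \overset{H}{\longleftarrow} x)$ would place it. I would handle this by tracking the triple $(Z, c, \alpha)$ through the restriction and invoking Theorem~\ref{thm:insbijection} if a cleaner inverse-insertion argument is needed, but I expect the direct forward analysis of the four rules, organized by whether the active entry is $\leq k$ or $> k$, to suffice.
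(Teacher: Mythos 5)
Your proposal is correct and follows essentially the same route as the paper, whose entire proof is the one-sentence assertion that ``letters greater than $k$ never affect letters in $[k]$''; your induction and case analysis of rules (H1)--(H4) is simply a careful verification of that assertion (your Case 1) together with its implicit companion that inserting a letter $\leq k$ acts on the $[k]$-subtableau exactly as it would on $Y|_{[k]}$ alone (your Case 2). Both cases check out, so your write-up is a valid, more detailed version of the paper's argument.
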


\begin{proof}
 This follows from the insertion rules; letters greater than $k$ never affect letters in $[k]$.
\end{proof}

\begin{lemma}\label{lem:insrowword}
 For any tableau $T$, $P(\r(T))=T$.
\end{lemma}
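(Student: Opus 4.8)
The plan is to prove $P(\r(T)) = T$ by induction on the number of rows of $T$, peeling off the bottom row. Write $T$ as having rows $R_1$ (top) through $R_k$ (bottom), so that $\r(T) = \r(R_k)\,\r(R_{k-1})\cdots \r(R_1)$, where $\r(R_i)$ denotes the entries of row $i$ read left to right. Let $T'$ be the tableau obtained from $T$ by deleting the bottom row $R_k$; then $T'$ is again an increasing tableau and $\r(T') = \r(R_{k-1})\cdots\r(R_1)$, so by the inductive hypothesis $P(\r(T')) = T'$. Since $\r(T) = \r(R_k)\cdot\r(T')$, we have $P(\r(T)) = (P(\r(R_k)) \overset{H}{\longleftarrow} \r(T'))$, i.e. we first Hecke-insert the bottom row in order and then insert the word $\r(T')$. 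The first step is easy: inserting the strictly increasing word $\r(R_k) = a_1 < a_2 < \cdots < a_m$ into the empty tableau uses rule (H1) at each step and simply lays the entries $a_1,\dots,a_m$ out in a single row. So it remains to show that inserting $\r(T')$ into this single-row tableau $R_k$ reconstructs all of $T$.

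The key step is therefore the following claim: if $U$ is the one-row increasing tableau with entries $R_k$, then $(U \overset{H}{\longleftarrow} \r(T')) = T$. I would prove this by a nested induction, again on the number of rows, but now tracking what happens row by row as we insert $\r(T')$. The cleanest way is to show more generally, by induction on the number of rows of $T$, a statement of the form: if $V$ is an increasing tableau consisting of the bottom $j$ rows of $T$ and $\r(T^{(j)})$ is the reading word of the top $k-j$ rows, then $(V \overset{H}{\longleftarrow} \r(T^{(j)})) = T$. For the inductive step one inserts the reading word of row $R_{k-j}$ (the lowest of the remaining top rows) into $V$; the point is that each entry of $R_{k-j}$, read left to right, gets placed exactly in its original position in row $k-j$, producing no bumping into the rows below because those rows already agree with $T$, and then the reading word of the rows above $R_{k-j}$ is inserted into the enlarged tableau, to which the outer induction applies.

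The main obstacle — and the part deserving real care — is verifying that inserting $\r(R_{k-j}) = b_1 < b_2 < \cdots < b_\ell$ into the tableau $V$ (whose rows below row $k-j$ already match $T$, and which currently has no row $k-j$ or higher) places the $b_i$'s precisely into the cells of row $k-j$ of $T$ without disturbing anything. One must check, using rules (H1)–(H4) and the fact that $T$ is an increasing tableau, that when $b_i$ is inserted it is weakly larger than everything currently in the top row of $V$ (which consists of $b_1,\dots,b_{i-1}$, occupying the first $i-1$ cells of what will become row $k-j$), and that adjoining $b_i$ as the $i$-th cell of that row yields an increasing tableau — this uses strict column-increase between row $k-j$ and row $k-j+1$ of $T$, i.e. that the entry of $T$ directly below the cell $(k-j, i)$ is strictly greater than $b_i$. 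So rule (H1) fires each time and no output integer is ever produced, meaning the lower rows are untouched. Here one also uses Lemma \ref{lem:below} or a direct argument to be sure that successive insertions from a strictly increasing word indeed append to the right in the same row rather than dropping down. Once this row-by-row placement is established, assembling the rows gives $P(\r(T)) = T$, completing the induction. An alternative, perhaps slicker, route is to invoke Lemma \ref{lem:restrict}: insert $\r(T)$ and argue by induction on the largest letter that the restriction $P(\r(T))|_{[k]}$ agrees with $T|_{[k]}$, but the row-peeling argument above seems the most transparent.
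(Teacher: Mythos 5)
Your overall strategy---induct on the number of rows and insert the reading word one row at a time, starting from the bottom row---is the same as the paper's, and the first step (inserting $\r(R_k)$ into the empty tableau via repeated (H1) to get the single row $R_k$) is fine. But the mechanism you describe for the key step is wrong. Hecke insertion always begins in the \emph{first} (top) row of the current tableau and can only create new boxes at the ends of existing rows or as a new \emph{bottom} row; it never deposits a letter into a fresh row above the existing ones. So when you insert $b_1$, the first letter of $R_{k-j}$, into $V$ (whose top row is $R_{k-j+1}$ of $T$), $b_1$ is \emph{strictly smaller} than every entry of that top row, since column-strictness of $T$ gives $b_1 <$ the $(k-j+1,1)$ entry. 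Hence (H1) does not fire; (H3) fires, $b_1$ replaces the first entry of the top row, and that entry is output and inserted into the row below, triggering a cascade of (H3)'s down the first column that ends with (H1) creating a new box at the bottom. Your claims that ``$b_i$ is weakly larger than everything currently in the top row,'' that ``rule (H1) fires each time,'' and that ``no output integer is ever produced, meaning the lower rows are untouched'' are all false in this situation, and the picture of $V$ as a tableau ``which currently has no row $k-j$ or higher'' is not something Hecke insertion can ever act on.

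What actually happens---and what the paper's one-line proof asserts---is that inserting $\r(R_{k-j}) = b_1 < \cdots < b_\ell$ into $V$ \emph{pushes every previously built row down by one}: each $b_i$ bumps the $i$-th entry of the old top row, which bumps the $i$-th entry of the next row, and so on, each replacement being legal precisely because of the row- and column-strictness of $T$; the cascade terminates by appending the displaced bottom entry via (H1), and the letters $b_{m+1},\dots,b_\ell$ beyond the length of the old top row are appended by (H1). The result is $V$ shifted down one row with $R_{k-j}$ as the new top row, which is what the induction needs. So the lemma is true and your skeleton is sound, but the verification you flag as ``the part deserving real care'' is carried out with the wrong insertion rules and would not survive being written out; it needs to be replaced by the bumping-cascade argument above.
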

\begin{proof}
It is easy to see that when each next row is inserted, it pushes down the previous rows.  
\end{proof}

\subsection{Recording tableaux} \label{sec:rec}

A \textit{set-valued tableau} $T$ of shape $\lambda$ is a filling of the boxes
with finite, non-empty subsets of positive integers so that
\begin{enumerate}
 \item the smallest number in each box is greater than or equal to the largest
number in the box directly to the left of it (if that box is present), and
\item the smallest number in each box is strictly greater than the largest number 
in the box directly above it (if that box is present).
\end{enumerate}

Given a word $h=h_1h_2\ldots h_l$, we can associate a pair of tableaux $(P(h), Q(h))$, where $P(h)$ is the insertion tableau described previously 
and $Q(h)$ is a set-valued tableau called the \textit{recording tableau} obtained as follows. Start with $Q(\emptyset)=\emptyset$. At each step of the insertion of $h$, let $Q(h_1\ldots h_k)$ be 
obtained from $Q(h_1\ldots h_{k-1})$ by labeling the special corner, $c$, in the insertion of $h_k$ into $P(h_1\ldots h_{k-1})$ with the positive integer $k$. Then 
$Q(h)=Q(h_1 h_2\ldots h_l)$ is the resulting strictly increasing set-valued tableau.

\begin{example}
 Let $h$ be $15133$. We obtain $(P(h),Q(h))$ with the following sequence, where in column $k$, $Q(h_1 \ldots h_k)$ is shown below $P(h_1 \ldots h_k)$. 
\begin{center}
\begin{ytableau}
 1
\end{ytableau}\hspace{.3in}
\begin{ytableau}
 1 & 5
\end{ytableau}\hspace{.3in}
\begin{ytableau}
 1 & 5 \\
5
\end{ytableau}\hspace{.3in}
\begin{ytableau}
 1 & 3 \\
5
\end{ytableau}\hspace{.3in}
\begin{ytableau}
 1 & 3 \\
5
\end{ytableau} 
$=P(h)$
\end{center}
\vspace{.2in}
\begin{center}
\begin{ytableau}
 1
\end{ytableau}\hspace{.3in}
\begin{ytableau}
 1 & 2
\end{ytableau}\hspace{.3in}
\begin{ytableau}
 1 & 2 \\
3
\end{ytableau} \hspace{.3in}
\begin{ytableau}
 1 & 2 \\
34
\end{ytableau}\hspace{.3in}
\begin{ytableau}
 1 & 25 \\
34
\end{ytableau}
$=Q(h)$
\end{center}
\end{example}

Call a word $h$ {\it {initial}} if the letters appearing in it are exactly the numbers in $[k]$ for some positive integer $k$. 

\begin{example}
 The word $13422$ is initial since the letters appearing in it are the numbers from $1$ to $4$. On the other hand, the word $1422$ is not initial 
because the letters appearing in it are $1$, $2$, and $4$ and do not form a set $[k]$ for any $k$. 
\end{example}

\begin{theorem} \label{thm:bij}
The map sending $h=h_1h_2\cdots h_n$ to $(P(h),Q(h))$ is a bijection between words and ordered pairs of tableaux of the same shape $(P,Q)$, where $P$ is an increasing tableau 
and $Q$ is a set-valued tableau with entries $\{1,2,\ldots,n\}$. It is also a bijection if there is an extra condition of being initial imposed both on $h$ and $P$.
\end{theorem}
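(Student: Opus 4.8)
The plan is to establish the bijection by exhibiting an explicit inverse map, using reverse Hecke insertion together with the fact that the recording tableau encodes exactly the data $(c,\alpha)$ needed to undo one insertion step. First I would recall that by Theorem~\ref{thm:insbijection}, a single Hecke insertion $(Y,x)\mapsto(Z,c,\alpha)$ is a bijection onto triples (increasing tableau, corner cell, $\alpha\in\{0,1\}$). The key observation is that from the pair $(P,Q)$ with $Q$ a set-valued tableau having entries $[n]$, one can read off the last insertion step: the cell $c_n$ containing the label $n$ is the special corner of the $n$-th insertion, and $\alpha_n=1$ precisely when $n$ is the \emph{only} entry in that cell (so a box was genuinely added), while $\alpha_n=0$ when $c_n$ contains other entries besides $n$ (the shape did not grow). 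Applying reverse Hecke insertion to $(P,c_n,\alpha_n)$ recovers $(Y,x)$ with $x=w_n$; removing $n$ from $Q$ (deleting the box if $n$ was alone there) yields $Q(w_1\cdots w_{n-1})$, and one iterates.

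The main steps in order are: (1) check that any pair $(P,Q)$ of the stated type is in the image, by showing the iterative stripping procedure above is well-defined — i.e.\ that deleting the largest label from a strictly increasing set-valued tableau again gives a strictly increasing set-valued tableau of a legitimate (increasing-tableau) shape, and that $\alpha_n$ is correctly determined by whether $n$ shares its cell; (2) check that this procedure is a two-sided inverse to $h\mapsto(P(h),Q(h))$, which reduces by induction on $n$ to the single-step statement of Theorem~\ref{thm:insbijection} plus the bookkeeping that $Q$ faithfully records each $(c_k,\alpha_k)$; (3) verify injectivity and surjectivity follow formally once the inverse is in hand. For the ``initial'' refinement, I would invoke Lemma~\ref{lem:restrict}: if $h$ is initial with largest letter $k$, then $h$ must contain every value in $[k]$, and conversely if $P$ is initial one argues that the word reconstructed by the stripping procedure uses exactly the alphabet $[k]$ — here the point is that reverse Hecke insertion from an initial tableau at each stage produces a letter that keeps the running word initial, which can be seen from the insertion rules (H1)--(H4) since the value bumped out of the first row, or adjoined, lies in the alphabet of $P$ together with values already present.

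The hard part will be step (1): pinning down exactly when $\alpha_n=0$ versus $\alpha_n=1$ in terms of the recording tableau, and confirming that the cell holding the maximum label $n$ is always a corner of the shape of $P$ that can legitimately be reverse-inserted. One must rule out pathologies such as the label $n$ sitting in a cell that is not a removable corner, or a cell where removing $n$ would leave a non-set-valued filling; both are handled by the observation that the special corner $c$ of an insertion is by construction at the end of its row and bottom of its column among the newly-affected cells, but making this airtight requires tracing through how $Q$ is built up. Once the single-step correspondence and this corner/$\alpha$ dictionary are nailed down, the rest is a routine induction, and the ``initial'' variant is a short addendum via Lemma~\ref{lem:restrict} and inspection of the insertion rules.
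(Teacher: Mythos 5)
Your proposal is correct and takes essentially the same approach as the paper: the paper's proof likewise inverts the map by repeatedly stripping the largest label $l$ from $Q$, setting $\alpha=1$ exactly when $l$ is alone in its cell and $\alpha=0$ otherwise, and applying reverse Hecke insertion via Theorem~\ref{thm:insbijection}. The only difference is one of emphasis — you flag the well-definedness of the stripping step and the initial-alphabet refinement as points needing verification, which the paper leaves implicit.
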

\begin{proof}
It is clear from the definition of $Q(h)$ that $P(h)$ and $Q(h)$ have the same shape, and it is clear 
from the insertion algorithm that $P(h)$ is an increasing tableau and $Q(h)$ is an increasing set-valued tableau. Thus, we must show that given $(P,Q)$, one can uniquely recover $h$. 

To recover $h$, perform reverse Hecke insertion in $P$ multiple times as follows. Let $l$ be the the largest entry in $Q$ and call its cell $c(l)$. If $l$ is the only entry in $c(l)$ inside in $Q$, 
perform reverse Hecke insertion with the triple $(P,c(l),\alpha=1)$. 
If the $l$ is not the only entry in its cell in $Q$, perform reverse Hecke insertion with the triple $(P,c(l),\alpha=0)$. This reverse Hecke insertion will end with output $(P_2,x_l)$.  
Set $Q_2=Q-\{l\}$, and follow the same procedure described above replacing $Q$ with $Q_2$ and $P$ with $P_2$. 
The reverse insertion will end with output $(P_3,x_{l-1})$. Set $Q_3=Q_2-\{l-1\}$. Continue this process until the output tableau is empty.
By Theorem \ref{thm:insbijection}, $h=x_1x_2\ldots x_l$, $P(h)=P$, and $Q(h)=Q$.
\end{proof}

\begin{example}
Let's start with the pair $(P,Q)$ from the previous example and recover $h$.
\begin{center}
$P=$
\begin{ytableau}
  1 & 3 \\
5
\end{ytableau}\hspace{1in}
$Q=$
\begin{ytableau}
  1 & 25 \\
34
\end{ytableau}
\end{center}
We first notice the largest entry of $Q$ is in cell $(1,2)$ and is not the smallest entry in cell $(1,2)$, so we perform the 
reverse Hecke insertion determined by the triple $(P,(1,2),0)$. The output of this reverse insertion is $(P_2,3)$, so $h_5=3$.
\begin{center}
 $P_2=$
\begin{ytableau}
  1 & 3 \\
5
\end{ytableau}\hspace{1in}
$Q_2=$
\begin{ytableau}
  1 & 2 \\
34
\end{ytableau}
\end{center}
The largest entry in $Q_2$ is in cell $(2,1)$ and is not the smallest entry in cell $(2,1)$, so we perform the reverse Hecke insertion determined by $(P_2,(2,1),0)$ and obtain output
$(P_3,3)$. Thus $h_4=3$.
\begin{center}
 $P_3=$
\begin{ytableau}
  1 & 5 \\
5
\end{ytableau}\hspace{1in}
$Q_3=$
\begin{ytableau}
  1 & 2 \\
3
\end{ytableau}
\end{center}
The largest entry in $Q_3$ is in cell $(2,1)$ and is the smallest entry in its cell. We perform reverse insertion $(P_3,(2,1),1)$, obtain output $(P_4,1)$, and set $h_3=1$.
\begin{center}
$P_4=$
\begin{ytableau}
  1 & 5 \\
\end{ytableau}\hspace{1in}
$Q_4=$
\begin{ytableau}
  1 & 2 \\
\end{ytableau}
\end{center}
In the last two steps, we recover $h_2=5$ and $h_1=1$.

\end{example}

\subsection{$K$-Knuth equivalence}

We next introduce the {\it {$K$-Knuth monoid}} of \cite{BS} as the quotient of the free monoid of all finite words on the alphabet $\{1,2,3,\ldots\}$ by the following relations:
\begin{center}
\begin{tabular}{l c l}
$(1)$ & $pp \equiv p$ & for all $p$ \vspace{.1in}\\
$(2)$ & $pqp \equiv qpq$ & for all $p$ and $q$\vspace{.1in} \\
 $(3)$ & $pqs \equiv qps$ and $sqp \equiv spq$ & whenever $p<s<q$.
\end{tabular}
\end{center}

This monoid is better suited for our purposes than Hecke monoid of \cite{BKSTY}, see Remark \ref{rem:whyrestrict}.


We shall say two words are {\it {$K$-Knuth equivalent}} if they are equal in the $K$-Knuth monoid. We denote $K$-Knuth equivalence by $\equiv$. 
We shall also say two words are {\it {insertion equivalent}} if they Hecke insert into the same tableau. We shall denote insertion equivalence by $\sim$.

\begin{example} \label{ex:3124}
 The words $34124$ and $3124$ are $K$-Knuth equivalent, since $$34124 \equiv 31424 \equiv 31242 \equiv 13242 \equiv 13422 \equiv 1342 \equiv 1324 \equiv 3124.$$
They are not insertion equivalent, however, since they insert into the following two distinct tableaux.

\begin{center}
$P(34124)=$
 \begin{ytableau}
  1 & 2 & 4 \\
3 & 4
 \end{ytableau}\hspace{1in}
$P(3124)=$
\begin{ytableau}
  1 & 2 & 4 \\
3 
 \end{ytableau}\hspace{.1in}.
\end{center}
\end{example}

\begin{example} \label{ex:35}
 As we soon shall see, $13524 \not \equiv 15324$.
\end{example}

\subsection{Properties of $K$-Knuth equivalence}

We will need three additional properties of Hecke insertion and $K$-Knuth equivalence.
The first follows from \cite[Theorem 6.2]{BS}.

\begin{theorem} \label{cor:imply}
 Insertion equivalence implies $K$-Knuth equivalence: if $w_1\sim w_2$ for words $w_1$ and $w_2$, then $w_1\equiv w_2$.
\end{theorem}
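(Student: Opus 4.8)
\textbf{Proof plan for Theorem~\ref{cor:imply}.}
The plan is to reduce the statement to the cited result \cite[Theorem 6.2]{BS}, which asserts that $K$-Knuth equivalence coincides with $K$-theoretic jeu de taquin equivalence, and to combine it with the compatibility between Hecke insertion and jeu de taquin. First I would recall that, by the construction of Hecke insertion in \cite{BKSTY}, inserting a word $w$ letter by letter can be realized as a sequence of $K$-jeu de taquin slides: one forms the ``super-standard'' or anti-diagonal skew filling whose reading word is $w$ and rectifies it, obtaining $P(w)$. Since rectification via $K$-jeu de taquin does not depend on the order of slides (this is exactly the confluence result underlying \cite{BS, ThY2}), the rectification of that skew filling is well-defined and equals $P(w)$. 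Hence if $w_1 \sim w_2$, i.e.\ $P(w_1) = P(w_2)$, the two skew fillings associated to $w_1$ and $w_2$ rectify to the same straight-shape increasing tableau, so they are $K$-jeu de taquin equivalent, and therefore $K$-Knuth equivalent by \cite[Theorem 6.2]{BS}.

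An alternative, more self-contained route that I would sketch as a backup avoids invoking jeu de taquin directly. The key observation is that $P(w)$ can always be recovered from $w$ by applying the $K$-Knuth relations alone: one shows that the reading word of $P(w)$ is $K$-Knuth equivalent to $w$. This is proved by induction on the length of $w$, tracking what happens to the reading word when a single letter is Hecke-inserted. Each of the insertion rules (H1)--(H4) changes the reading word in a controlled way — bumping a letter $y$ out of a row and into the next, or leaving a row unchanged while still passing $y$ upward — and in each case the change is effected by a sequence of the moves $pp \equiv p$, $pqp \equiv qpq$, and the Knuth-type braid moves $pqs \equiv qps$, $sqp \equiv spq$. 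This is the same bookkeeping that proves the classical fact that $\r(P_{\mathrm{RSK}}(w)) \approx w$ under ordinary Knuth moves, with the extra relation $pp \equiv p$ absorbing the idempotency phenomena special to Hecke insertion (e.g.\ rule (H2) and rule (H4), where a letter is reinserted without changing the row). Granting this, if $w_1 \sim w_2$ then $w_1 \equiv \r(P(w_1)) = \r(P(w_2)) \equiv w_2$, which is the claim.

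The main obstacle is the inductive step in this second approach: one must check that \emph{every} configuration arising in rules (H2), (H4), and the box-blocking cases of (H1), (H3) can be resolved using only relations (1)--(3), and in particular that the non-local ``look at the whole column'' behavior of Hecke insertion (the reason, noted in the paper, that insertion equivalence has no purely local description) can nonetheless be simulated by finitely many local $K$-Knuth moves on the reading word. Since the paper explicitly flags that it is content to cite \cite[Theorem 6.2]{BS} for this direction, I would present the first route as the actual proof: state that the reading word of $P(w)$ is obtained from $w$ by $K$-jeu de taquin rectification, invoke order-independence of rectification to conclude $w_1 \sim w_2 \Rightarrow \r(P(w_1))$ and $\r(P(w_2))$ are $K$-jeu de taquin equivalent, and then apply \cite[Theorem 6.2]{BS} to upgrade this to $K$-Knuth equivalence. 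The proof is then a one-paragraph citation argument, with the heavy lifting delegated to \cite{BS}.
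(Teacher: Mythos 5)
Your overall strategy --- delegate everything to \cite[Theorem 6.2]{BS} --- is exactly what the paper does: its entire proof of this statement is that citation. The cited theorem says that every word is $K$-Knuth equivalent to the row reading word of its Hecke insertion tableau, so the argument is simply $w_1 \equiv \r(P(w_1)) = \r(P(w_2)) \equiv w_2$, with no jeu de taquin needed at all. Your second, self-contained route is essentially a sketch of how one would prove that cited fact directly ($\r(P(w)) \equiv w$ by induction on insertion steps); it is the right idea, but as you acknowledge, the case analysis for (H2)/(H4) is the entire content of such a proof, and you do not carry it out, so that route remains a plan rather than a proof.

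The genuine problem is in your primary route. You assert that ``rectification via $K$-jeu de taquin does not depend on the order of slides'' and call this ``the confluence result underlying \cite{BS, ThY2}.'' This is false: $K$-rectification of increasing tableaux is \emph{not} confluent in general, and that failure is precisely why the notion of a unique rectification target exists --- the main theorems of \cite{ThY2} and \cite{BS} identify special targets (superstandard, minimal) for which order-independence \emph{does} hold, exactly because it fails otherwise. The phenomenon is already visible in Example \ref{ex:3124} of the paper, where one $K$-Knuth class contains two distinct straight-shape tableaux. The argument can be repaired without confluence: you only need that Hecke insertion of $w$ realizes \emph{some} sequence of slides on the antidiagonal skew tableau of $w$ --- so that this skew tableau is jdt-equivalent to $P(w)$ --- together with the fact that jdt-equivalence of skew tableaux implies $K$-Knuth equivalence of their reading words. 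But the compatibility of Hecke insertion with $K$-jdt is itself a substantive theorem (it is what Section 6 of \cite{BS} establishes), not something that follows ``by the construction of Hecke insertion in \cite{BKSTY},'' so the detour through jeu de taquin buys you nothing over citing \cite[Theorem 6.2]{BS} in the form $w \equiv \r(P(w))$ and concluding in one line.
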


As we saw in Example \ref{ex:3124}, the converse of this result is not true. 

We now examine the length of the longest strictly increasing subsequence of a word $w$, denoted by $\i(w)$,
and length of the longest strictly decreasing subsequence of $w$, $\d(w)$. The next result follows from the 
$K$-Knuth equivalence relations.

\begin{lemma}\label{lem:LIS}
If $w_1\equiv w_2$, then $\i(w_1)=\i(w_2)$ and $\d(w_1)=\d(w_2)$.
\end{lemma}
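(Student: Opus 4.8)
The plan is to reduce the statement to a single local claim: each of the three generating relations of the $K$-Knuth monoid preserves both $\i$ and $\d$. Since $K$-Knuth equivalence is the congruence generated by these relations, and since the quantities $\i$ and $\d$ of a word depend only on the multiset of subsequences (not on global structure), it suffices to check invariance under one application of a relation inside an arbitrary context; that is, I would show that for any words $u,v$ and any instance $a\equiv b$ of a relation, $\i(uav)=\i(ubv)$ and $\d(uav)=\d(ubv)$. By the obvious symmetry (reversing a word swaps $\i$ and $\d$, and the relation set is stable under reversal), it is enough to handle $\i$.

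First I would dispose of relation $(1)$, $pp\equiv p$: inserting or deleting a repeated adjacent letter changes neither the length of the longest strictly increasing subsequence (a strictly increasing subsequence can use at most one copy of two equal adjacent letters anyway) nor that of the longest strictly decreasing one. Next, for relation $(2)$, $pqp\equiv qpq$: here I would argue that any strictly increasing subsequence of $u\,pqp\,v$ uses at most two of the three middle letters, and in fact at most one of $\{p,p\}$ together with possibly $q$; a short case analysis on which letters of the triple are used — and whether $p<q$ or $q<p$ — shows that the set of ``profiles'' $(\text{last letter usable from }u\text{-side},\ \text{first letter usable from }v\text{-side})$ realizable through $pqp$ is the same as through $qpq$. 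Concretely, through either block one can pass with an increase using: nothing; a single $p$; a single $q$; or the pair giving the smaller-then-larger of $p,q$. The same bookkeeping handles relation $(3)$: for $pqs\equiv qps$ with $p<s<q$, a strictly increasing subsequence through the block can use at most two of the three letters, and the admissible ordered pairs are among $\{p,s\},\{p,q\}$ (note $q$ then $s$ is not increasing, nor is $q,p$ nor $s,p$), which is exactly the set available through $qps$; symmetrically for $sqp\equiv spq$.

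The cleanest way to package the case analysis is to define, for a word $x$, the function $f_x(i)=$ the smallest value of the last entry of a strictly increasing subsequence of $x$ of length $i$ (with $f_x(i)=\infty$ if none exists) — equivalently the ``patience sorting'' pile-top vector — and to observe that $\i(uav)$ is determined by $f_u$, by $f_{(\,\cdot\,)}$ composed through $a$, and by the reversed analogue for $v$. It then suffices to check that $a$ and $b$ induce the same transformation on these pile-top profiles, which is the finite check sketched above for $a\equiv b$ each of the three types. I expect the main obstacle to be purely organizational: making the ``pass-through profile'' formalism precise enough that the three local verifications are genuinely routine rather than ad hoc, while keeping the write-up short. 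An acceptable alternative, if that formalism feels heavy, is to prove invariance directly: given a longest strictly increasing subsequence $\sigma$ of $uav$, exhibit one of the same length in $ubv$ by replacing the (at most two) positions of $\sigma$ lying in the block $a$ with suitable positions in $b$, and conversely; this is elementary but requires listing the same handful of cases. Finally, I would note that $\d$ is handled verbatim by the word-reversal symmetry, since reversing $u\,pqp\,v$ gives $\tilde v\,pqp\,\tilde u$ (and $pqs$ reversed is $sqp$, which is the other half of relation $(3)$), so no separate argument is needed.
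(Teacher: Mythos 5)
Your overall strategy coincides with the paper's: reduce to a single application of each generating relation inside an arbitrary context and check invariance case by case, and your observation that word reversal swaps $\i$ and $\d$ while permuting the relations (sending $pqs\equiv qps$ to $sqp\equiv spq$) is a legitimate economy over the paper's ``a similar argument applies for $\d$.'' Cases (1) and (2) are fine. However, your main-line argument for relation (3) contains a genuine error: it is not true that the set of increasing ordered pairs realizable through the block $pqs$ equals the set realizable through $qps$. With $p<s<q$, the pair $(p,q)$ is a strictly increasing subsequence of $pqs$ (positions $1$ and $2$) but is not realizable in $qps$, where $q$ precedes $p$; the only increasing pair through $qps$ is $(p,s)$. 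So the two blocks do not admit the same ``pass-through profiles,'' and the finite check you describe fails exactly here. The same issue occurs for $sqp\equiv spq$, where $(p,q)$ is realizable in $spq$ but not in $sqp$.

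What saves the lemma --- and what the paper actually does --- is a substitution rather than an identification: if a longest strictly increasing subsequence of $upqsv$ has the form $u'pqv'$, then every letter of $v'$ used after the block exceeds $q>s$, so $u'psv'$ is a strictly increasing subsequence of $uqpsv$ of the same length; conversely, every increasing subsequence through $qps$ uses only $(p,s)$ or a singleton, all of which are available in $pqs$. You mention this replacement argument only as an ``acceptable alternative'' and do not carry it out, but it is the necessary step, not an optional one. The pile-top formalism via $f_x$ could also be made to work --- one can verify that $pqs$ and $qps$ induce the same map on pile configurations even though they do not admit the same increasing subsequences --- but that verification is not the one you sketched, so as written the proposal has a gap at relation (3).
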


\begin{proof}
It is enough to assume the two words differ by one equivalence relation. 

Suppose $w_1=upv$ and $w_2=uppv$ for some possibly empty words $u$ and $v$. Then if $u'pv'$ is a
strictly increasing sequence in $w_1$, for some possibly empty $u'$ subword of $u$ and $v'$ subword of $v$,
 it is also a strictly increasing sequence in $w_2$. And 
since a strictly increasing sequence can only use one occurence of $p$, any strictly increasing sequence in $w_2$
is also strictly increasing in $w_1$. 

Next, consider the case where $w_1=upqpv$ and $w_2=uqpqv$, and assume $p<q$. If $u'pqv'$ is a strictly increasing sequence
in $w_1$, notice that we have the same sequence in $w_2$. Similarly, if a strictly increasing
sequence of $w_1$ is of the form $u'pv'$ or $u'qv'$, we have the same sequence in $w_2$. Since strictly increasing
sequences of $w_2$ involving the $p$ or $q$ that are outside of $u$ and $v$ have the same form as those described above, 
any strictly increasing sequence of $w_2$ is appears as a strictly increasing sequence in $w_1$.

Lastly, suppose $w_1=upqsv$ and $w_2=uqpsv$ for $p<s<q$. If a strictly increasing sequence
in $w_1$ (resp. $w_2$) uses only one of the $p$ and $q$ outside of $u$ and $v$, then clearly this is still a strictly increasing
sequence in $w_2$ (resp. $w_1$). If a strictly increasing sequence in $w_1$ is $u'pqv'$, then $u'psv'$ is a strictly
increasing sequence in $w_2$ of the same length and vice versa.

A similar arguement applies for $\d(w_1)$ and $\d(w_2)$.
\end{proof}

We can use this result to verify that $13524$ is not $K$-Knuth equivalent to $15324$, as promised in Example \ref{ex:35}. 
Indeed, $\d(13524)=2$ and $\d(15324)=3$.

\begin{remark}
 We do not know any analogue of the other Greene-Kleitman invariants, see \cite{GK} and \cite{ThY}. 
\end{remark}

We shall need the following lemma.

\begin{lemma}\label{restricttointerval}\cite[Lemma 5.5]{BS}
 Let $I$ be an interval in the alphabet $A$. If $w\equiv w'$, then $w|_I \equiv w'|_I$.
\end{lemma}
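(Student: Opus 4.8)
The plan is to reduce to the case where $w$ and $w'$ differ by a single $K$-Knuth relation, and then to verify that applying $(-)|_I$ to both sides yields two words that are again $K$-Knuth equivalent — in fact related by at most one relation, or simply equal. Since $K$-Knuth equivalence is generated by relations (1), (2), (3) and restriction to $I$ is compatible with concatenation, $(uv)|_I = (u|_I)(v|_I)$, it suffices to analyze each relation applied to a factor surrounded by fixed words $u$ and $v$ and to record what happens to the restrictions.

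For relation (1), with $w = upv$ and $w' = uppv$: if $p \in I$ the restricted words are $u|_I\, p\, v|_I$ and $u|_I\, p\, p\, v|_I$, related by relation (1); if $p \notin I$ they are equal. For relation (2), with $w = upqpv$ and $w' = uqpqv$: if both $p, q \in I$ the restrictions are related by relation (2); if exactly one of $p, q$ lies in $I$ the restrictions are $u|_I\, x\, x\, v|_I$ and $u|_I\, x\, v|_I$ for the surviving letter $x$, related by relation (1); if neither lies in $I$ they are equal. None of this uses that $I$ is an interval.

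For relation (3), say $w = upqsv$ and $w' = uqpsv$ with $p < s < q$ (the case $sqp \equiv spq$ is symmetric). Here the three permuted letters are distinct, and the interval hypothesis does the work: if the two outer values $p$ and $q$ both lie in $I$, then $s$, lying strictly between them, also lies in $I$, and the restricted words $u|_I\, p\, q\, s\, v|_I$ and $u|_I\, q\, p\, s\, v|_I$ are related by relation (3). In every remaining case at most one of $p, q$ survives restriction, so at least two of the three distinct letters that were rearranged are deleted; a short check of the finitely many sub-cases (which single letter, if any, among $\{p, q, s\}$ survives) shows that $w|_I$ and $w'|_I$ then coincide.

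The only subtlety is the observation that the interval hypothesis rules out the configuration $p, q \in I$, $s \notin I$ in relation (3); without it the restriction would not be well defined — for instance $\{1,3\}$ is not an interval, and $132 \equiv 312$ restrict to $13$ and $31$, which are inequivalent by Lemma \ref{lem:LIS}. Beyond isolating this point, the argument is a routine finite case analysis, so I expect no real obstacle.
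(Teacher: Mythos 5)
The paper offers no proof of this lemma at all—it is quoted directly from \cite[Lemma 5.5]{BS}—and your generator-by-generator verification is the standard argument for it; the essential point, that the interval hypothesis forces $s\in I$ whenever $p,q\in I$ in relation (3), is exactly right, as is the counterexample with $I=\{1,3\}$ showing the hypothesis cannot be dropped. One small miscount in your last paragraph: in the remaining cases of relation (3) it is not true that at least two of the three letters are deleted (e.g.\ $p,s\in I$ but $q\notin I$ leaves two survivors), but the conclusion still holds because the relation only transposes $p$ and $q$ while $s$ stays put, so the two restrictions literally coincide whenever at most one of $p,q$ survives.
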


The last result in this section was proved by H. Thomas and A. Yong in \cite{ThY}. It gives information about 
the shape of $P(w)$ and of $P(h)$ for any $h\equiv w$.

\begin{theorem}\cite[Theorem 1.3]{ThY}\label{thm:heckeshape}
 For any word $w$, the size of the first row and first column of its insertion tableau are given by
 $\i(w)$ and $\d(w)$, respectively.
\end{theorem}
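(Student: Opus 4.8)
The plan is to reduce the statement to a purely combinatorial fact about the reading word of an increasing tableau, and then to prove that fact directly. First I would note that $w$ and $\r(P(w))$ are insertion equivalent: by Lemma~\ref{lem:insrowword} we have $P(\r(P(w)))=P(w)$, so $w\sim \r(P(w))$. Theorem~\ref{cor:imply} then gives $w\equiv \r(P(w))$, and Lemma~\ref{lem:LIS} yields $\i(w)=\i(\r(P(w)))$ and $\d(w)=\d(\r(P(w)))$. Since $P(w)$ is an increasing tableau of some partition shape $\lambda$, it therefore suffices to prove the following: for every increasing tableau $T$ of shape $\lambda$, one has $\i(\r(T))=\lambda_1$ (the length of the first row) and $\d(\r(T))=\ell(\lambda)$ (the length of the first column).

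The lower bounds are immediate. The entries of the first row of $T$, read left to right, form a strictly increasing subsequence of $\r(T)$ of length $\lambda_1$, so $\i(\r(T))\ge\lambda_1$. The entries of the first column of $T$, which by definition of the reading word appear in $\r(T)$ in order from the bottom of the column to the top and are strictly increasing down the column, form a strictly decreasing subsequence of $\r(T)$ of length $\ell(\lambda)$, so $\d(\r(T))\ge\ell(\lambda)$.

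For the upper bounds I would track each letter of a subsequence by the cell of $T$ it occupies. If $b_1>b_2>\cdots>b_m$ is a strictly decreasing subsequence of $\r(T)$ occurring in that order, with $b_j$ in row $r_j$, then two consecutive terms cannot lie in the same row (a term occurring later in a row is larger, contradicting strict decrease), and since the reading word visits lower rows before higher rows we get $r_1>r_2>\cdots>r_m$, so $m\le\ell(\lambda)$. If $a_1<a_2<\cdots<a_k$ is a strictly increasing subsequence with $a_i$ in cell $(r_i,c_i)$, I claim $c_1<c_2<\cdots<c_k$, which forces $k\le\lambda_1$. When $a_i,a_{i+1}$ lie in the same row this is clear; otherwise $r_i>r_{i+1}$, and assuming $c_{i+1}\le c_i$ the cell $(r_i,c_{i+1})$ exists because $T$ has Young-diagram shape. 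Its entry exceeds $a_{i+1}$ by strict increase down column $c_{i+1}$ and is at most $a_i$ by weak increase along row $r_i$, giving $a_{i+1}<a_i$, a contradiction. Combining the two bounds yields $\i(\r(T))=\lambda_1$ and $\d(\r(T))=\ell(\lambda)$, which together with the reduction above proves the theorem.

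The main obstacle is this last case analysis for increasing subsequences: unlike the decreasing case, consecutive terms may legitimately occupy different rows, and ruling out a column collision genuinely requires the Young-diagram shape to guarantee the auxiliary cell $(r_i,c_{i+1})$ is present. (One could alternatively attempt a Schensted-style proof tracking how Hecke insertion grows the first row and first column of $P(w)$, but the idempotency relation $pp\equiv p$ makes the bookkeeping more delicate, so the reduction to reading words of tableaux via Theorem~\ref{cor:imply} and Lemma~\ref{lem:LIS} is the cleaner route given the tools already available.)
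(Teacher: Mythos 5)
Your argument is correct, and it is worth noting that the paper itself supplies no proof of this statement: it is imported wholesale from Thomas--Yong \cite[Theorem 1.3]{ThY}, where it is established by a direct Schensted-style analysis of how Hecke insertion grows the first row and first column. Your route is genuinely different and is internally consistent with the paper's logical ordering: Lemma~\ref{lem:insrowword} gives $w\sim\r(P(w))$, Theorem~\ref{cor:imply} upgrades this to $w\equiv\r(P(w))$, Lemma~\ref{lem:LIS} transports $\i$ and $\d$ across the equivalence, and your self-contained cell-tracking argument correctly shows that for an increasing tableau of shape $\lambda$ the reading word has $\i=\lambda_1$ and $\d=\ell(\lambda)$ (both the lower bounds and the two upper-bound arguments --- distinct rows for decreasing subsequences, strictly increasing column indices for increasing subsequences, using left-justification to guarantee the auxiliary cell $(r_i,c_{i+1})$ --- check out). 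What your approach buys is a short derivation from ingredients already on the page; what it costs is that all the depth is concentrated in Theorem~\ref{cor:imply}, which the paper itself only cites from \cite[Theorem 6.2]{BS}. So your proof is conditional on that black box, and one should verify that the Buch--Samuel result does not itself rely on \cite[Theorem 1.3]{ThY} before claiming an independent proof; the original Thomas--Yong argument avoids this dependency entirely by working directly with the insertion algorithm.
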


\section{$K$-theoretic Poirier-Reutenauer}

Let $[[h]]$ denote the sum of all words in the $K$-Knuth equivalence class of an initial word $h$:
$$[[h]] = \displaystyle\sum_{h \equiv w} w.$$
This is an infinite sum. The number of terms in $[[h]]$ of length $l$ is finite, however, for every positive integer $l$. 

Let $KPR$ denote the vector space 
spanned by all sums of the form $[[h]]$ for some initial word $h$. We will endow $KPR$ with a product and a coproduct structure, which are compatible 
with each other. We will refer to the resulting bialgebra as the {\it {$K$-theoretic Poirier-Reutenauer bialgebra}} and denote it by $KPR$. 

\subsection{$K$-Knuth equivalence of tableaux}

Suppose we have increasing tableaux $T$ and $T'$. Recall that $\r(T)$ denotes the row reading word of $T$. As in \cite{BS}, we say that $T\equiv T'$ if $\r(T)\equiv \r(T')$.

\begin{example}
For the $T$ and $T'$ shown below, we have that $T\equiv T'$ because $$\r(T)=34124\equiv \r(T')=3124$$ as shown in Example \ref{ex:3124}.
\begin{center}
$T=$
\begin{ytableau}
1 & 2 & 4 \\
3 & 4
\end{ytableau}\hspace{.5in}$\equiv$\hspace{.5in}
$T'=$
\begin{ytableau}
1 & 2 & 4 \\
3
\end{ytableau}
\end{center}
\end{example}

Note that by Lemma \ref{lem:LIS} and Theorem \ref{thm:heckeshape}, if two tableaux are equivalent, their first rows
have the same size and their first columns have the same size.

The following lemma says that each element of $KPR$ splits into insertion classes of words.

\begin{lemma} \label{lem:hecktoins}
 We have 
$$[[h]] = \sum_{T} \left( \sum_{P(w)=T} w\right)$$
where the sum is over all increasing tableaux $T$ whose reading word is in the $K$-Knuth equivalence class of $h$. 
\end{lemma}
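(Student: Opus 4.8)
The plan is to show that the two sides of the claimed identity are sums of the same multiset of words. The left-hand side $[[h]]$ is, by definition, the sum of all words $w$ with $w \equiv h$. The right-hand side groups words according to their insertion tableau $P(w)$, so it suffices to establish two things: first, that every word $w$ appearing on the left appears on exactly one inner sum on the right; and second, that the inner sums on the right range exactly over those tableaux $T$ with $\r(T) \equiv h$, with no spurious terms.

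First I would observe that for a fixed word $w$, the tableau $P(w)$ is uniquely determined (Hecke insertion is a deterministic algorithm, and $P(w)$ is well-defined by Theorem \ref{thm:bij}), so each word $w$ contributes to at most one inner sum $\sum_{P(w')=T} w'$ on the right. Thus the right-hand side is a sum over disjoint classes of words, partitioned by insertion tableau. It remains to check that a word $w$ satisfies $w \equiv h$ if and only if there is an increasing tableau $T$ with $\r(T) \equiv h$ and $P(w) = T$. For the forward direction: given $w \equiv h$, set $T = P(w)$; then by Lemma \ref{lem:insrowword} we have $P(\r(T)) = T = P(w)$, so $\r(T) \sim w$, hence by Theorem \ref{cor:imply} (insertion equivalence implies $K$-Knuth equivalence) $\r(T) \equiv w \equiv h$, as needed. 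For the reverse direction: suppose $P(w) = T$ and $\r(T) \equiv h$. Again by Lemma \ref{lem:insrowword}, $P(\r(T)) = T = P(w)$, so $w \sim \r(T)$, and by Theorem \ref{cor:imply} again, $w \equiv \r(T) \equiv h$. Hence $w$ is counted on the left.

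Putting these together: the right-hand side is $\sum_T \sum_{P(w) = T} w$ where $T$ ranges over increasing tableaux with $\r(T) \equiv h$; by the two implications just shown, the collection of words $w$ occurring in this double sum is exactly $\{w : w \equiv h\}$, each occurring once, which is precisely $[[h]]$. One should also note for bookkeeping that the outer sum on the right is genuinely supported only on increasing tableaux: if $P(w) = T$ then $T$ is an increasing tableau by the properties of Hecke insertion recalled in Section 2, so there is no need to worry about non-increasing fillings appearing.

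The only mildly delicate point is making sure the indexing set of the outer sum is stated consistently with how the words are actually partitioned — that is, that "$\r(T)$ is in the $K$-Knuth class of $h$" is the correct condition rather than, say, "$T \equiv$ some tableau equivalent to $h$," but since $\equiv$ on tableaux is defined via reading words and is transitive, these coincide. I do not expect a real obstacle here; the statement is essentially a repackaging of the definitions together with the one nontrivial input, Theorem \ref{cor:imply}, that insertion equivalence refines $K$-Knuth equivalence. The main thing to be careful about is not to claim the converse of Theorem \ref{cor:imply} — we are only using that $P(w) = P(w')$ implies $w \equiv w'$, never the reverse, which is exactly why a single $K$-Knuth class can split into several insertion classes and the inner sums are in general nontrivial.
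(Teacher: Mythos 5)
Your proof is correct and follows essentially the same route as the paper: the paper's entire proof is the observation that the identity follows from Theorem \ref{cor:imply} (insertion equivalence refines $K$-Knuth equivalence), which is precisely the key input you isolate and apply in both directions, with $P(\r(T))=T$ supplying the link between a tableau and its insertion class. You have merely written out the bookkeeping that the paper leaves implicit.
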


\begin{proof}
 Follows from Theorem \ref{cor:imply}.
\end{proof}

This expansion is always finite by Lemma \ref{lem:fin}.

\subsection{Product structure}

Let $\shuffle$ denote the usual shuffle product of words. Let $h$ be a word in the alphabet $[n]$, and let $h'$ be a word in the alphabet $[m]$. 
Denote by $w[n]$ the word obtained from $w$ by increasing each letter by $n$. Define 
$$[[h]] \cdot [[h']] = \sum_{w \equiv h, w' \equiv h'} w \shuffle w'[n].$$

\begin{theorem} \label{thm:PRprod}
For any two initial words $h$ and $h'$, their product can be written as 
$$[[h]] \cdot [[h']] = \displaystyle\sum_{h''} [[h'']],$$
where the sum is over a certain set of initial words $h''$. 
\end{theorem}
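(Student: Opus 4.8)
The plan is to pin down exactly which words occur in the defining sum
$$[[h]] \cdot [[h']] = \sum_{w \equiv h,\ w' \equiv h'} w \shuffle w'[n],$$
to see that each occurs with coefficient one, and then to observe that the resulting support is a disjoint union of full $K$-Knuth classes, each of which contains an initial word.

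First I would record the elementary fact that each of the three $K$-Knuth relations $(1)$, $(2)$, $(3)$ preserves the set of distinct letters occurring in a word. Hence $w \equiv h$ forces $w$ to use exactly the letters $[n]$, and $w' \equiv h'$ forces $w'$ to use exactly $[m]$, so $w'[n]$ uses exactly $[n+1,n+m]$. Consequently every word $u$ in the support of $[[h]]\cdot[[h']]$ uses exactly the letters $[n+m]$ — in particular $u$ is initial — and satisfies $u|_{[n]} \equiv h$ together with $u|_{[n+1,n+m]} \equiv h'[n]$; conversely any word $u$ with exactly these letters and these two restriction properties lies in the support. The key point for multiplicities is that, the letter sets $[n]$ and $[n+1,n+m]$ being disjoint, a word $u$ on $[n+m]$ has a \emph{unique} expression as a shuffle of a word on $[n]$ and a word on $[n+1,n+m]$: the two factors must be $u|_{[n]}$ and $u|_{[n+1,n+m]}$, and the interleaving is forced by the positions of the two letter classes inside $u$. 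Thus $u \mapsto \bigl(u|_{[n]},\, (u|_{[n+1,n+m]})\text{ shifted down by }n\bigr)$ is a bijection from the support onto $\{w : w \equiv h\}\times\{w' : w' \equiv h'\}$, so every word in the support of $[[h]]\cdot[[h']]$ appears there with coefficient exactly $1$.

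Next I would show this support, call it $S$, is closed under $\equiv$. If $u \in S$ and $u \equiv v$, then $v$ again uses exactly the letters $[n+m]$ (since $\equiv$ preserves the distinct-letter set), hence is initial. Applying Lemma \ref{restricttointerval} with the interval $I = [1,n]$ gives $v|_{[n]} \equiv u|_{[n]} \equiv h$, and applying it with $I = [n+1,n+m]$ gives $v|_{[n+1,n+m]} \equiv u|_{[n+1,n+m]} \equiv h'[n]$. Hence $v \in S$, so $S$ is a disjoint union of $K$-Knuth equivalence classes. Since every word in $S$ is initial, each such class equals $[[h'']]$ for an initial word $h''$ (take $h''$ to be any member of the class), and choosing one representative per class together with the multiplicity-one statement yields $[[h]] \cdot [[h']] = \sum_{h''} [[h'']]$ over this set of representatives.

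I do not expect a serious obstacle in this argument; it is essentially the Poirier–Reutenauer closure argument transported to $K$-Knuth classes, with Lemma \ref{restricttointerval} doing the work that locality of the ordinary Knuth relations does classically. The two places that genuinely need care are the uniqueness of the shuffle decomposition across the two disjoint sub-alphabets — this is what guarantees coefficient one and rules out distinct cross-terms collapsing — and making sure the restrictions in Lemma \ref{restricttointerval} are taken with respect to the correct intervals $[1,n]$ and $[n+1,n+m]$.
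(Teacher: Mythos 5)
Your argument is correct and follows essentially the same route as the paper's proof, whose entire content is the observation that Lemma \ref{restricttointerval} (applied to the intervals $[1,n]$ and $[n+1,n+m]$) forces the support of the product to be a union of full $K$-Knuth classes. You additionally spell out the multiplicity-one bookkeeping via the uniqueness of the shuffle decomposition over disjoint alphabets, which the paper leaves implicit but which is needed for the sum to literally be a sum of classes $[[h'']]$.
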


\begin{proof}
From Lemma \ref{restricttointerval}, 
we know that if a word appears in the righthand sum, the entire equivalence class of this word appears as well.
The claim follows. 
\end{proof}

\begin{example}\label{ex:PRprod}
 Let $h=12$, $h'=312$. Then 
$$[[12]] \cdot [[312]] = [[53124]]+[[51234]]+[[35124]]+[[351234]]+[[53412]]+[[5351234]].$$  
\end{example}

\begin{theorem} \label{thm:hecktoins}
 Let $h$ be a word in alphabet $[n]$, and let $h'$ be a word in alphabet $[m]$. Suppose $\mathscr{T}=\{P(h),T'_1, T'_2, \ldots, T'_s\}$ is 
the equivalence class containing $P(h)$. Then we have 
$$[[h]] \cdot [[h']] =
\displaystyle\sum_{T\in T(h \shuffle h')}\sum_{P(w)=T} w,$$
where $T(h \shuffle h')$ is the finite set of tableaux $T$ such that $T|_{[n]}\in\mathscr{T}$ and $\r(T)|_{[n+1,n+m]}\equiv h'[n]$.
\end{theorem}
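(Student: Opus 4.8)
\textbf{Proof proposal for Theorem \ref{thm:hecktoins}.}
The plan is to combine the two previous results, Theorem \ref{thm:PRprod} (which tells us the product is a sum of full $K$-Knuth classes $[[h'']]$) and Lemma \ref{lem:hecktoins} (which splits each $[[h'']]$ into insertion classes $\sum_{P(w)=T}w$). So it suffices to identify exactly which tableaux $T$ arise, i.e.\ to show that the set of $T$ with $\r(T)\equiv h''$ for some $h''$ appearing in the product coincides with $T(h\shuffle h')$. Concretely, I would first show the containment: if a word $u$ appears in $[[h]]\cdot[[h']]$, then $u = w\shuffle w'[n]$ for some $w\equiv h$, $w'\equiv h'$, so the restriction $u|_{[n]}$ is a word in the positions occupied by $w$; since those positions carry exactly the letters of $w$, we get $u|_{[n]}\equiv h$ by definition, hence $P(u|_{[n]})\in\mathscr T$. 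By Lemma \ref{lem:restrict}, $P(u)|_{[n]} = P(u|_{[n]})\in\mathscr T$. Similarly $u|_{[n+1,n+m]} = w'[n]\equiv h'[n]$, and a row-reading-word version of Lemma \ref{lem:restrict} (or simply the observation that $\r(P(u))|_{[n+1,n+m]}\equiv u|_{[n+1,n+m]}$, which follows from Theorem \ref{cor:imply} applied to $u\equiv\r(P(u))$ together with Lemma \ref{restricttointerval}) gives $\r(P(u))|_{[n+1,n+m]}\equiv h'[n]$. Thus $P(u)\in T(h\shuffle h')$, and summing over all $u$ in the product shows the left side is contained in $\sum_{T\in T(h\shuffle h')}\sum_{P(w)=T}w$.

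For the reverse containment I would argue that every word $w$ with $P(w)=T$ for some $T\in T(h\shuffle h')$ actually occurs in the product. Given such a $T$, we have $T|_{[n]}\in\mathscr T$, so $\r(T|_{[n]})\equiv h$; and by Lemma \ref{lem:restrict} and Theorem \ref{cor:imply}, $w|_{[n]}\equiv\r(P(w|_{[n]}))=\r(T|_{[n]})\equiv h$, so $w|_{[n]}$ is (after forgetting positions) a word $K$-Knuth equivalent to $h$ on the alphabet $[n]$. Likewise $w|_{[n+1,n+m]}\equiv\r(T)|_{[n+1,n+m]}\equiv h'[n]$, so it is a word equivalent to $h'[n]$. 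Since $w$ is a shuffle of $w|_{[n]}$ and $w|_{[n+1,n+m]}$ (the alphabets $[n]$ and $[n+1,n+m]$ are disjoint), and these are respectively equivalent to $h$ and to $h'[n]$, the word $w$ is one of the terms in $\sum_{w_1\equiv h,\, w_2\equiv h'} w_1\shuffle w_2[n]$, i.e.\ $w$ appears in $[[h]]\cdot[[h']]$. Conversely any term in the product is a shuffle of such pieces, so no extra terms appear; this establishes equality of the two sums.

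Finally I would dispatch the finiteness claim: $T(h\shuffle h')$ is finite because any $T$ in it has $\r(T)\equiv \r(T|_{[n]})\cdot(\text{something})$ of bounded size — more directly, $T$ is filled with the alphabet $[n+m]$, so by Lemma \ref{lem:fin} there are only finitely many candidates, and the conditions $T|_{[n]}\in\mathscr T$ and $\r(T)|_{[n+1,n+m]}\equiv h'[n]$ only cut this down. (Alternatively: by Theorem \ref{thm:PRprod} the product is a \emph{finite} sum $\sum_{h''}[[h'']]$ of full classes, and by Lemma \ref{lem:hecktoins} each contributes finitely many tableaux.)

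\textbf{Main obstacle.} The delicate point is the careful bookkeeping between \emph{words as abstract sequences on disjoint alphabets} and \emph{words as occupants of positions in a shuffle}: one must be sure that restricting a shuffle $w_1\shuffle w_2[n]$ to the sub-alphabet $[n]$ returns exactly $w_1$ (and to $[n+1,n+m]$ returns $w_2[n]$), and conversely that any word splitting correctly under these restrictions is a shuffle of its two restrictions. This is elementary but is where an error would most easily creep in. A secondary subtlety is the passage from insertion statements (Lemma \ref{lem:restrict}, which is about $P$ and restriction to $[k]$) to reading-word statements on the \emph{interval} $[n+1,n+m]$ rather than an initial segment; here one leans on Lemma \ref{restricttointerval} of \cite{BS} together with Theorem \ref{cor:imply} to move between $w$, $\r(P(w))$, and their restrictions.
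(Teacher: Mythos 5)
Your proposal is correct and follows essentially the same route as the paper's (much terser) proof: decompose the product via Theorem \ref{thm:PRprod} and Lemma \ref{lem:hecktoins}, identify the tableaux that occur using Lemma \ref{lem:restrict} for the $[n]$-restriction, and get finiteness from Lemma \ref{lem:fin}. Your additional care with the $[n+1,n+m]$ condition via Lemma \ref{restricttointerval} and Theorem \ref{cor:imply} just fills in details the paper leaves implicit.
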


\begin{proof}
 If $w$ is a shuffle of some $w_1\equiv h$ and $w_2\equiv h'[n]$, then by Lemma \ref{lem:restrict} $P(w)|_{[n]}=P(w_1)\in\mathscr{T}$.
By Lemma \ref{lem:hecktoins} and Theorem \ref{thm:PRprod}, we get the desired expansion. 
Its finiteness follows from Lemma \ref{lem:fin}.
\end{proof}

\begin{example}
 Let's take $h=12$ and $h'=312$. Then \\
\begin{center}
 $P(h)=$ \begin{ytableau}
          1 & 2
         \end{ytableau}\hspace{.1in}and $P(h')=$
\begin{ytableau}
1 & 2 \\
3
\end{ytableau}\hspace{.1in}.
\end{center}
The insertion tableaux appearing in their product are those shown below.
\begin{center}
\begin{ytableau}
 1 & 2 & 4 \\
3 \\
5
\end{ytableau}\hspace{.2in}
\begin{ytableau}
  1 & 2 & 4 \\
3 & 5 \\
5
 \end{ytableau}\hspace{.2in}
\begin{ytableau}
 1 & 2 & 4 \\
3 & 4 \\
5
\end{ytableau}\hspace{.2in}
\begin{ytableau}
 1 & 2 & 3 & 4 \\
5
\end{ytableau}\hspace{.2in}
 \begin{ytableau}
  1 & 2 & 4 \\
3 & 5
 \end{ytableau}\hspace{.2in}
\begin{ytableau}
  1 & 2 & 3 & 4 \\
3 & 5
 \end{ytableau}\hspace{.2in}
\end{center}
\begin{center}
\begin{ytableau}
 1 & 2 \\
3 & 4 \\
5
\end{ytableau}\hspace{.2in}
\begin{ytableau}
 1 & 2 & 3 & 4 \\
3 \\
5
\end{ytableau}\hspace{.2in}
\begin{ytableau}
  1 & 2 & 3 & 4 \\
3 & 5 \\
5
 \end{ytableau}
\end{center}

Each of them restricted to $[2]$ is clearly $P(12)$. One can check that each of the row reading words restricted to the alphabet $3,4,5$ is $K$-Knuth 
equivalent to $534$. For example, in case of the last tableau
$$53534 \equiv 53354 \equiv 5354 \equiv 5534 \equiv 534.$$ 

Note that the first three tableaux listed are equivalent to each other and the last two tableaux listed are equivalent to each other. 
We will see in the next section that the fourth and sixth tableaux are not 
equivalent. With this in mind, we can see that there are no other equivalent pairs by examining the sizes of the first rows and first columns. 
The six classes of tableaux in this example correspond to the six equivalence classes in Example \ref{ex:PRprod}.
\end{example}

\begin{corollary} \label{cor:prod}
 The vector space $KPR$ is closed under the product operation. That is, the sum appearing on the right hand side in Theorem \ref{thm:PRprod}  is always finite. 
\end{corollary}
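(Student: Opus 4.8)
\textbf{Proof plan for Corollary \ref{cor:prod}.}

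The plan is to show that the product $[[h]] \cdot [[h']]$ is a \emph{finite} sum of terms of the form $[[h'']]$, from which closure under the product operation is immediate since each $[[h'']]$ lies in $KPR$ by definition. The key point to establish is the finiteness of the indexing set for the sum on the right-hand side of Theorem \ref{thm:PRprod}. I would argue via the two-sided description already available: by Theorem \ref{thm:hecktoins}, the product can also be written as $\sum_{T \in T(h \shuffle h')} \sum_{P(w)=T} w$, where $T(h \shuffle h')$ is the set of increasing tableaux $T$ with $T|_{[n]} \in \mathscr{T}$ and $\r(T)|_{[n+1,n+m]} \equiv h'[n]$. Since the classes $[[h'']]$ appearing in Theorem \ref{thm:PRprod} are precisely obtained by grouping the insertion classes in $T(h\shuffle h')$ according to $K$-Knuth equivalence (combining Lemma \ref{lem:hecktoins} with the matching of the two expansions), it suffices to show that $T(h \shuffle h')$ is a finite set.

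The finiteness of $T(h \shuffle h')$ follows from a bounded-alphabet argument. First, I would observe that every tableau $T$ in $T(h \shuffle h')$ is filled with letters from the fixed finite alphabet $[n+m]$: indeed, any word $w$ appearing as a shuffle of some $w_1 \equiv h$ and $w_2[n]$ with $w_2 \equiv h'$ uses only letters of $w_1$ (which lie in $[n]$, since $h$ is initial on $[n]$ and $K$-Knuth equivalence preserves the set of letters used, by inspection of relations (1)--(3)) together with letters of $w_2[n]$ (which lie in $[n+1, n+m]$ for the analogous reason). Hence $P(w)$ is filled with letters from $[n+m]$. By Lemma \ref{lem:fin}, there are only finitely many increasing tableaux filled with a given finite alphabet, so $T(h \shuffle h')$ is finite. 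Consequently the outer sum in Theorem \ref{thm:hecktoins} is finite, and regrouping into $K$-Knuth classes yields a finite sum of the form $\sum_{h''} [[h'']]$, each summand lying in $KPR$; therefore $KPR$ is closed under the product.

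I do not expect any serious obstacle here — the corollary is essentially a packaging of Theorem \ref{thm:PRprod}, Theorem \ref{thm:hecktoins}, and Lemma \ref{lem:fin}. The one point requiring a moment of care is the claim that $K$-Knuth equivalence preserves the underlying set of letters: this must be checked against each of the three defining relations, but it is immediate, since $pp \equiv p$, $pqp \equiv qpq$, and $pqs \equiv qps$ (and its mirror) each involve exactly the letters $\{p\}$, $\{p,q\}$, and $\{p,q,s\}$ on both sides. With that observation in hand, the alphabet of every word in $h \shuffle h'$ is contained in $[n+m]$, and Lemma \ref{lem:fin} closes the argument.
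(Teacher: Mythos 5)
Your proposal is correct and follows essentially the same route as the paper: the paper likewise deduces finiteness of the sum in Theorem \ref{thm:PRprod} from the finiteness in Theorem \ref{thm:hecktoins} (which itself rests on Lemma \ref{lem:fin}) together with the fact, from Lemma \ref{lem:hecktoins}, that $K$-Knuth classes are coarser than insertion classes. Your added check that $K$-Knuth equivalence preserves the underlying alphabet is a reasonable extra detail but not a departure from the paper's argument.
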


\begin{proof}
 We know from Lemma \ref{lem:hecktoins} that $K$-Knuth classes are coarser than insertion classes. Thus finiteness of right hand side in Theorem \ref{thm:PRprod}
follows from that in Theorem \ref{thm:hecktoins}. 
\end{proof}

\begin{remark} \label{rem:prod}
 The product of insertion classes is not necessarily a linear combination of insertion classes. For example, consider the following tableaux, $T$ and $T'$.
\begin{center}
$T=$
 \begin{ytableau}
  1 & 2
 \end{ytableau}\hspace{1in}
$T'=$
\begin{ytableau}
 1 & 2 & 4 \\
3
\end{ytableau}
\end{center}
Then $12$ and $1342$ are in the insertion classes of $T$ and $T'$, respectively, and we get $315642$ as a term in their shuffle product.
The insertion tableau of $315642$ is shown below.
\begin{center}
$P(315642)=$
 \begin{ytableau}
  1 & 2 & 6 \\
3 & 4 \\
5
 \end{ytableau}
\end{center}
Notice that $P(3156442)=P(315642)$, but $3156442$ will not appear in the shuffle product of the insertion classes of $12$ and $1342$ since
$314562|_{[3,6]}=35644\not\sim3564$.
\end{remark}

\subsection{Coproduct structure}

For any word $w$, let $\overline w$ denote the {\it {standardization}} of $w$: if a letter $a$ is $k$-th smallest among the letters of 
$w$, it becomes $k$ in $\overline w$. For example, ${\overline {42254}} = 21132$. Note that standardization of a word is 
always an initial word.

Let $w = a_1 a_2 \ldots a_n$ be an initial word. Define $$\Delta (w) = \sum_{i=0}^n \overline {a_1 \ldots a_i} \otimes \overline {a_{i+1} \ldots a_n}.$$

Similarly, define
$$\Delta ([[h]]) = \sum_{h \equiv w} \Delta (w).$$

\begin{example}
 We have $$\Delta(34124) = \emptyset \otimes 34124 + 1 \otimes 3123 + 12 \otimes 123 + 231 \otimes 12 + 3412 \otimes 1 + 34124 \otimes \emptyset,$$
and 
$$\Delta([[34124]]) = \Delta(34124) + \Delta(31424) + \Delta(31242) + \dotsc.$$
\end{example}

Here, $\emptyset$ should be understood to be the identity element of the ground field. We denote it by $\emptyset$ so as to avoid confusion with the word $1$.

\begin{theorem} \label{thm:PRcoprod}
For any initial word $h$, its coproduct can be written as 
$$\Delta ([[h]]) = \sum_{h',h''} [[h']] \otimes[[h'']],$$
where the sum is over a certain set of pairs of initial words $h',h''$. 
\end{theorem}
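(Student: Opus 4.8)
The plan is to mirror the proof of Theorem \ref{thm:PRprod} by showing that the right-hand side of the displayed formula for $\Delta([[h]])$ is a union of full $K$-Knuth equivalence classes of initial words, taken in both tensor factors simultaneously. Concretely, expanding the definitions, $\Delta([[h]]) = \sum_{h \equiv w}\sum_{i=0}^{|w|} \overline{w_1\cdots w_i} \otimes \overline{w_{i+1}\cdots w_{|w|}}$, so it suffices to prove the following closure statement: if a simple tensor $\overline{u} \otimes \overline{v}$ occurs in this sum (where $w = uv$ is a splitting of some $w \equiv h$), then for every initial word $p$ with $p \equiv \overline{u}$ and every initial word $q$ with $q \equiv \overline{v}$, the tensor $p \otimes q$ also occurs. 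Granting this, one groups the terms of the sum into $[[h']]\otimes[[h'']]$ blocks indexed by the distinct pairs $([[h']],[[h'']])$ appearing, which is exactly the asserted form.

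The key step is therefore the closure statement, and the natural tool is Lemma \ref{restricttointerval} (restriction to an interval preserves $K$-Knuth equivalence), used in tandem with a compatibility between splitting-then-standardizing and the $K$-Knuth relations. First I would reduce to a single $K$-Knuth move: it is enough to show that if $w \equiv w'$ differ by one of the relations $(1)$, $(2)$, $(3)$, then the multiset $\{\overline{u}\otimes\overline{v} : w = uv\}$ and the corresponding multiset for $w'$ give rise to the same union of pairs of classes. Because the relations are \emph{local} — they act on a window of two or three consecutive letters — any splitting point $i$ that does not fall strictly inside that window is completely unaffected: the two pieces $u, v$ on the two sides are identical for $w$ and $w'$ up to replacing that window, and since the window lies entirely on one side, that side changes by the very same $K$-Knuth relation (hence stays in the same class after standardization, as standardization commutes with $K$-Knuth equivalence — this last point I would record as a small observation, since $\overline{(\cdot)}$ is induced by an order-preserving bijection of the alphabet and all three relations are stated purely in terms of the order of the letters). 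The finitely many remaining splitting points — those inside the three-letter window — must be checked by hand: for each such $i$ one writes down the (at most three) words $\overline{u}$, the (at most three) words $\overline{v}$ on either side of the move, and verifies that applying the appropriate $K$-Knuth relation to $\overline{u}$ or to $\overline{v}$ (or, in a couple of boundary cases, using relation $(1)$ to absorb a repeated letter, which changes the length of a piece) carries the pair from one side of the equivalence to the other.

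The main obstacle I expect is precisely this case analysis at split points interior to a relation window, and in particular the behavior of relation $(1)$, $pp \equiv p$: here the two words $w = u(pp)v$ and $w' = u(p)v$ have different lengths, so the indexing of split points does not match up naively, and a split "between the two $p$'s" on the left must be matched with a split adjacent to the single $p$ on the right. One checks that $\overline{u p} \otimes \overline{p v}$ versus $\overline{u p}\otimes \overline{v}$ and $\overline{u}\otimes\overline{p v}$ differ only by an application of relation $(1)$ in one tensor slot (the repeated letter $p$ either already present in $u$'s last position or in $v$'s first position, or freshly $(1)$-absorbable), so the pair of classes is unchanged; the relations $(2)$ and $(3)$ preserve length and are more routine. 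After assembling these local checks, the reduction to a single move plus induction on the length of the chain of relations connecting two words in $[[h]]$ completes the argument; finiteness of the number of distinct blocks $[[h']]\otimes[[h'']]$ for fixed total length follows, as before, because only finitely many pairs of initial words of bounded length arise. It is worth noting that, unlike in the cohomological case, we do \emph{not} claim here that the coproduct is "multiplicity-free at the level of tableaux"; the statement to be proved is only the closure under $K$-Knuth equivalence in each factor, which is all that is needed, and the sharper tableau-theoretic description is deferred to the URT setting of Section 4.
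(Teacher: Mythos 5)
Your first paragraph identifies the right target, and it is the same target the paper aims at: one must show that whenever $\overline{u}\otimes\overline{v}$ occurs (from a split $w=uv$ of some $w\equiv h$), every $p\otimes q$ with $p\equiv\overline{u}$ and $q\equiv\overline{v}$ also occurs. The gap is in how you propose to prove this closure. Your reduction --- that for $w\equiv w'$ differing by a single relation, the splits of $w$ and of $w'$ generate the same collection of pairs of classes --- is false. Take $h=132$ and the relation $pqs\equiv qps$ with $(p,s,q)=(1,2,3)$, so $w=132\equiv w'=312$. Splitting after the first letter gives $\overline{1}\otimes\overline{32}=1\otimes 21$ from $w$ but $\overline{3}\otimes\overline{12}=1\otimes 12$ from $w'$; since $12\not\equiv 21$ by Lemma \ref{lem:LIS}, these lie in genuinely different blocks $[[h']]\otimes[[h'']]$, and no $K$-Knuth relation applied ``in one tensor slot'' carries one to the other. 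The same failure occurs for relation $(1)$: from $w=112$ the split between the two $1$'s gives $1\otimes 12$, while the splits of $w'=12$ give only $1\otimes 1$, $12\otimes\emptyset$, $\emptyset\otimes 12$. So the hand-checks you defer cannot succeed. Moreover, even if the reduction held, it would only show that the set of blocks is independent of the chosen representative $w$; it would not show that each block is \emph{entirely contained} in the sum, which is what the closure statement asserts.

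The correct argument (and the paper's) never compares splits of $w$ with splits of an equivalent $w'$; it acts on one tensor factor at a time. Work before standardization with $\blacktriangle(w)=\sum_{i} a_1\cdots a_i\otimes a_{i+1}\cdots a_n$: the pairs occurring in $\blacktriangle([[h]])$ are exactly the pairs $(u,v)$ with $uv\equiv h$, each occurring once. This set is closed under replacing $u$ by any $u'\equiv u$, because a relation applied inside $u$ is a relation applied inside $uv$ supported entirely to the left of the split point, so $u'v\equiv uv\equiv h$ and the split of $u'v$ at position $|u'|$ produces $u'\otimes v$; likewise for the right factor. Hence $\blacktriangle([[h]])=\sum [[h']]\otimes[[h'']]$ with $h',h''$ words on subalphabets. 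Standardizing then carries each class on a fixed support $S$ bijectively onto the class of its standardization among initial words, since the support is a $K$-Knuth invariant and the relations are preserved by the order isomorphism $S\to[|S|]$ --- this is the precise form of your (correct) observation that standardization commutes with the relations. Phrased this way, no analysis of splits interior to a relation window is needed at all.
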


\begin{proof}
 For $w = a_1 a_2 \ldots a_n$ let $\blacktriangle(w) = \sum_{i=0}^n  {a_1 \ldots a_i} \otimes  {a_{i+1} \ldots a_n}$, and $\blacktriangle ([[h]]) = \sum_{h \equiv w} \blacktriangle (w).$
It is clear that $\blacktriangle ([[h]]) = \sum_{h',h''} [[h']] \otimes[[h'']]$ for some collection of pairs of words $h',h''$. This is because $K$-Knuth 
equivalence relations are local and thus can be applied on both sides of $\otimes$ in parallel with applying the same relation to the corresponding word on the left. 
It remains to standardize every term on the right and to use the fact that $K$-Knuth equivalence relations commute with standardization.
\end{proof}

\begin{example}
If we take $h=12$ in the previous theorem, we have
$$\Delta([[12]])=[[\emptyset]]\otimes[[12]]+[[1]]\otimes[[1]]+[[12]]\otimes[[1]]+[[1]]\otimes[[12]]+[[12]]\otimes[[\emptyset]].$$
\end{example}

\begin{theorem} \label{thm:cohecktoins}
 Let $h$ be a word. We have 
$$\Delta ([[h]]) =
\displaystyle\sum_{(T',T'') \in T(h)} \left( \sum_{P(w)= \overline{T'}} w \right) \otimes \left( \sum_{P(w)= \overline{T''}} w \right),$$
where $T(h)$ is the finite set of pairs of tableaux $T',T''$ such that $\r(T') \r(T'') \equiv h$.
\end{theorem}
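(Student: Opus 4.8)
The plan is to deduce Theorem \ref{thm:cohecktoins} from Theorem \ref{thm:PRcoprod} by refining each $K$-Knuth class into insertion classes, in parallel with how Theorem \ref{thm:hecktoins} was obtained from Theorem \ref{thm:PRprod}. First I would start from the definition $\Delta([[h]]) = \sum_{h\equiv w}\Delta(w)$, where $\Delta(w) = \sum_{i} \overline{a_1\ldots a_i}\otimes \overline{a_{i+1}\ldots a_n}$ for $w = a_1\ldots a_n$. Each term is a tensor of two standardized words $\overline u\otimes\overline v$ with $w = uv$ the concatenation. The goal is to collect these terms according to the insertion tableaux $P(\overline u)$ and $P(\overline v)$.

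The key step is the following correspondence. For a term $\overline u\otimes\overline v$ coming from a decomposition $w = uv$ with $w\equiv h$, set $T' = P(\overline u)$ and $T'' = P(\overline v)$; equivalently, by Lemma \ref{lem:insrowword}, $\overline u\equiv \r(T')$ and $\overline v\equiv \r(T'')$ (using Theorem \ref{cor:imply} and the fact that $\overline u\sim \r(P(\overline u))$). I claim the pairs $(T',T'')$ arising this way are exactly those in $T(h)$, i.e. those with $\r(T')\r(T'')\equiv h$. For one direction: if $w = uv\equiv h$, then since standardization commutes with $K$-Knuth equivalence (as used in the proof of Theorem \ref{thm:PRcoprod}) and $\r(T')\equiv \overline u$, $\r(T'')\equiv \overline v$ up to a shift of alphabets, one recovers $\r(T')\r(T'')\equiv \overline{uv} = \overline h = h$ after accounting for the fact that the alphabet of $v$ must be shifted to sit above that of $u$ — here one uses that $T(h)$ is phrased with $\r(T')\r(T'')$ already in a compatible relative order, and that $w = uv$ can always be recovered from $\overline u,\overline v$ and the ``splitting value'' by an order-preserving relabeling. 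For the converse: given $(T',T'')\in T(h)$, the word $\r(T')\cdot(\r(T''){+}k)$ for appropriate $k$ is $K$-Knuth equivalent to $h$, and it is a concatenation $uv$ with $\overline u = \r(T')$, $\overline v = \r(T'')$, exhibiting the term. Then, exactly as in Theorem \ref{thm:hecktoins}, I would group all the words $w'$ in the $\Delta$-sum whose left standardized factor inserts to $\overline{T'}$ and whose right factor inserts to $\overline{T''}$; by Lemma \ref{lem:hecktoins} (the splitting of a $K$-Knuth class into insertion classes) applied to both tensor factors, the total contribution of the pair $(T',T'')$ is precisely $\big(\sum_{P(w)=\overline{T'}}w\big)\otimes\big(\sum_{P(w)=\overline{T''}}w\big)$. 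Finiteness of $T(h)$ follows from Lemma \ref{lem:fin}, since $T'$ and $T''$ are increasing tableaux on alphabets bounded by the length of $h$.

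The main obstacle is bookkeeping the interplay between standardization and the concatenation/alphabet shift: a term of $\Delta$ records $\overline u\otimes\overline v$, but to say that the pair is captured by the condition $\r(T')\r(T'')\equiv h$ one must reconstruct an actual word $uv\equiv h$ from the pair of standardized factors, and different splittings of the same $w$ can standardize to the same pair only when they agree as words. I would handle this by observing that the map $w\mapsto (\overline u,\overline v, \text{position and value data})$ loses no information needed here because $[[h]]$ already records all $w\equiv h$, so summing $\Delta(w)$ over the class and then standardizing is the same as summing over all compatible concatenations of words equivalent to $\r(T')$ and (shifts of) $\r(T'')$; this is precisely the content that makes Theorem \ref{thm:PRcoprod} well-defined, and I would cite that proof for the fact that $K$-Knuth relations act in parallel across $\otimes$ and commute with standardization. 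Once that is in place, the refinement into insertion classes is immediate from Lemma \ref{lem:hecktoins} and Theorem \ref{cor:imply}, and the proof closes.
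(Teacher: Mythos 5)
Your proposal follows essentially the same route as the paper's proof: pass to the unstandardized coproduct $\blacktriangle$ from the proof of Theorem \ref{thm:PRcoprod}, split each tensor factor into insertion classes via Lemma \ref{lem:hecktoins} and Theorem \ref{cor:imply}, standardize only at the end, and get finiteness from Lemma \ref{lem:fin}. In outline this is correct.

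However, one piece of your bookkeeping is wrong as stated, though it is repairable. To a term $\overline u\otimes\overline v$ of $\Delta(w)$ you attach the pair $T'=P(\overline u)$, $T''=P(\overline v)$ and then try to verify $\r(T')\,\r(T'')\equiv h$. But $\overline u\,\overline v$ is in general not $K$-Knuth equivalent to $uv$: for $w=312$, $u=3$, $v=12$ one gets $\overline u\,\overline v=1\cdot 12=112\equiv 12\not\equiv 312$ (restrict to the interval $\{3\}$ and use Lemma \ref{restricttointerval}). Likewise, in your converse direction the word $\r(T')\cdot(\r(T'')+k)$ imports the alphabet shift from the \emph{product}, which has no counterpart here: $T(h)$ consists of pairs of tableaux in the original letters of $h$ with $\r(T')\,\r(T'')\equiv h$ literally, with no shift. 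The clean fix --- and what the paper does --- is to index the terms of $\blacktriangle([[h]])$ by the unstandardized tableaux $T'=P(u)$, $T''=P(v)$; then $\r(T')\equiv u$ and $\r(T'')\equiv v$ by Lemma \ref{lem:insrowword} and Theorem \ref{cor:imply}, so locality of the $K$-Knuth relations gives $\r(T')\,\r(T'')\equiv uv=w\equiv h$, and every pair in $T(h)$ arises by taking $u=\r(T')$, $v=\r(T'')$. Only at the very end does one apply standardization, using that Hecke insertion depends only on the relative order of letters, so $P(\overline u)=\overline{P(u)}=\overline{T'}$. With that correction your argument coincides with the paper's.
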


\begin{proof}
As we have seen in the proof of Theorem \ref{thm:PRcoprod}, $\blacktriangle ([[h]]) = \sum_{h',h''} [[h']] \otimes[[h'']]$. Note that the sum on the right
is multiplicity-free and that if we split each of the $[[h']]$ and $[[h'']]$ into insertion classes, we get exactly 
$$\displaystyle\sum_{(T',T'') \in T(h)} \left( \sum_{P(w)= {T'}} w \right) \otimes \left( \sum_{P(w)= {T''}} w \right).$$
 It remains to apply standardization to get the desired result. 
The finitness follows from Lemma \ref{lem:fin}.
\end{proof} 

\begin{corollary} \label{cor:coprod}
The vector space $KPR$ is closed under the coproduct operation. That is, the sums appearing on the right hand side in Theorem \ref{thm:PRcoprod} are always finite. 
\end{corollary}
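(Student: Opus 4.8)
\textbf{Proof proposal for Corollary \ref{cor:coprod}.}

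The plan is to deduce the finiteness of the right-hand side of Theorem \ref{thm:PRcoprod} directly from the finiteness already established in Theorem \ref{thm:cohecktoins}, exactly in the spirit of how Corollary \ref{cor:prod} was derived from Theorem \ref{thm:hecktoins}. First I would recall that by Lemma \ref{lem:hecktoins} (together with Theorem \ref{cor:imply}), every $K$-Knuth class $[[h']]$ is a finite union of insertion classes; the set of insertion classes occurring in $[[h']] \otimes [[h'']]$, for all pairs $(h',h'')$ appearing in $\Delta([[h]])$, is precisely the set indexed by $(T',T'') \in T(h)$ in Theorem \ref{thm:cohecktoins}. Since that theorem asserts $T(h)$ is finite (via Lemma \ref{lem:fin}: $T'$ and $T''$ both have bounded shape because $\r(T')\r(T'') \equiv h$ has a fixed longest strictly increasing and strictly decreasing subsequence, so by Theorem \ref{thm:heckeshape} their first rows and columns are bounded, hence by Lemma \ref{lem:fin} there are finitely many such tableaux on the relevant alphabet), only finitely many distinct $K$-Knuth classes $[[h']]$ and $[[h'']]$ can appear. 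Therefore the sum $\sum_{h',h''}[[h']]\otimes[[h'']]$ in Theorem \ref{thm:PRcoprod} has only finitely many summands, which is exactly the assertion that $KPR$ is closed under $\Delta$.

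Concretely, the key steps in order are: (1) invoke Theorem \ref{thm:cohecktoins} to write $\Delta([[h]])$ as a finite sum over $(T',T'') \in T(h)$ of tensor products of insertion-class sums; (2) observe that each insertion class $\{w : P(w) = \overline{T'}\}$ is contained in a single $K$-Knuth class by Theorem \ref{cor:imply}, and that grouping the insertion classes in $\Delta([[h]])$ by their $K$-Knuth class recovers the decomposition $\sum_{h',h''}[[h']]\otimes[[h'']]$ of Theorem \ref{thm:PRcoprod}; (3) conclude that the number of pairs $(h',h'')$ is at most the (finite) number of pairs $(T',T'')$, hence finite. One small point to verify along the way: standardization is a bijection on the level of both words and tableaux that does not change the shape, so passing between the $T'$ and $\overline{T'}$ versions in Theorems \ref{thm:cohecktoins} and \ref{thm:PRcoprod} does not affect any finiteness count.

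I do not expect a serious obstacle here: the entire content has effectively been front-loaded into Theorem \ref{thm:cohecktoins}, whose own proof already cites Lemma \ref{lem:fin} for finiteness. The only thing to be careful about is making the reduction clean — namely spelling out that $K$-Knuth classes are coarser than insertion classes (Lemma \ref{lem:hecktoins}), so that finiteness of the finer decomposition forces finiteness of the coarser one. This is the same one-line argument as in the proof of Corollary \ref{cor:prod}, and I would simply write: "We know from Lemma \ref{lem:hecktoins} that $K$-Knuth classes are coarser than insertion classes. Thus finiteness of the right-hand side in Theorem \ref{thm:PRcoprod} follows from that in Theorem \ref{thm:cohecktoins}."
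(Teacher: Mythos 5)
Your proposal is correct and follows essentially the same route as the paper: finiteness is inherited from Theorem \ref{thm:cohecktoins} (whose tableaux $T',T''$ are filled from the finite alphabet of $h$, so Lemma \ref{lem:fin} applies), combined with the observation that $K$-Knuth classes are coarser than insertion classes. The paper's own proof is exactly this two-line reduction, so no further comment is needed.
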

\begin{proof}
Entries in the tableaux $T$ and $T'$ are a subset of letters in the word $h$. The statement follows from finitness in Theorem \ref{thm:cohecktoins} and the fact that 
$K$-Knuth classes are coarser than insertion classes.
\end{proof}

\begin{remark} \label{rem:coprod}
It is not true that insertion classes are closed under the coproduct. For example, $123\otimes 1 $ is a term in 
$\Delta(1342)$ and thus in the coproduct of its insertion class, but $123 \otimes 11$ is not. To see this, consider 
all words $h$ containing only $1$, $2$, $3$, and $4$ such that $123\otimes 11$ is in $\Delta(h)$. These words are 
$12344$, $12433$, $13422$, and $23411$, none of which are insertion equivalent to $1342$.
\end{remark}

\subsection{Compatibility and antipode}

Recall that a product $\cdot$ and a coproduct $\Delta$ are {\it {compatible}} if the coproduct is an algebra morphism:
$$\Delta(X \cdot Y) = \Delta(X) \cdot \Delta(Y).$$
A vector space endowed with a compatible product and coproduct is called a {\it {bialgebra}}.
We refer the reader for example to \cite{ShS} for details on bialgebras. 

\begin{theorem}
 The product and coproduct structures on $KPR$ defined above are compatible, thus giving $KPR$ a bialgebra structure.
\end{theorem}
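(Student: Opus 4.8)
The plan is to verify the compatibility identity $\Delta(X \cdot Y) = \Delta(X) \cdot \Delta(Y)$ by checking it at the level of individual words and then invoking linearity. Since $KPR$ is spanned by the elements $[[h]]$, and since the product and coproduct of such elements were \emph{defined} by first expanding into the underlying words $w \equiv h$, applying the shuffle/standardization operations word-by-word, it suffices to establish the analogous compatibility for words: for an initial word $w$ on $[n]$ and an initial word $w'$ on $[m]$, one has $\Delta(w \shuffle w'[n]) = \Delta(w) \cdot \Delta(w')$, where on the right the product of two tensors $(\sum u_i \otimes v_i) \cdot (\sum u'_j \otimes v'_j)$ is $\sum (u_i \shuffle u'_j[\,|u_i|\,]) \otimes (v_i \shuffle v'_j[\,|v_i|\,])$ after the appropriate standardizations. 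Because standardization of a concatenation $a_1 \cdots a_i$ coming from an initial word is unambiguous and commutes with the shuffle-and-shift operations in the same way $K$-Knuth relations do (as already used in the proofs of Theorems~\ref{thm:PRprod} and \ref{thm:PRcoprod}), the essential content is the unstandardized identity $\blacktriangle(w \shuffle w'[n]) = \blacktriangle(w) \cdot \blacktriangle(w')$.

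First I would recall that this word-level compatibility is exactly the classical fact underlying the Poirier--Reutenauer bialgebra: the free associative algebra on an alphabet, equipped with the shuffle product and the deconcatenation coproduct, is a bialgebra (indeed a Hopf algebra); see \cite{PR}. Concretely, given a shuffle $z$ of $w$ and $w'[n]$, cutting $z$ after its $k$-th letter partitions the shuffled letters into a prefix and a suffix; restricting the shuffle data to the letters of $w$ lying in the prefix versus the suffix, and likewise for $w'[n]$, exhibits $z$'s prefix as a shuffle of a prefix of $w$ with a (shifted) prefix of $w'$, and similarly for the suffix. This sets up a bijection between the terms of $\blacktriangle(w \shuffle w'[n])$ and the terms of $\blacktriangle(w) \cdot \blacktriangle(w')$, which is precisely the required identity. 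I would state this as the key lemma and either cite \cite{PR} or sketch the bijection in one or two sentences.

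Next I would pass from $\blacktriangle$ to $\Delta$ by applying standardization to every tensor factor, exactly as in the proof of Theorem~\ref{thm:cohecktoins}: standardization is a well-defined map on prefixes and suffixes of initial words, it intertwines the shift operation $w \mapsto w[n]$ with the shifts appearing in the product of tensors, and it commutes with the shuffle in the sense needed. Then I would sum over the $K$-Knuth classes: since $[[h]] \cdot [[h']]$ and $\Delta([[h]])$ were defined as sums over all representatives $w \equiv h$, $w' \equiv h'$, and since $\Delta$ and $\cdot$ of words satisfy the compatibility termwise, summing over representatives yields $\Delta([[h]] \cdot [[h']]) = \Delta([[h]]) \cdot \Delta([[h']])$. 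Extending by linearity over $KPR$ completes the argument. Closure of $KPR$ under both operations has already been established in Corollaries~\ref{cor:prod} and \ref{cor:coprod}, so all expressions involved genuinely lie in $KPR \otimes KPR$ and the statement is meaningful.

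The main obstacle is bookkeeping rather than conceptual: one must be careful that the shift amounts in the definition of the product of two tensors $(u_i \otimes v_i)$ and $(u'_j \otimes v'_j)$ are $|u_i|$ on the left factor and $|v_i|$ on the right factor, and that these match the way a cut point of $z = w \shuffle w'[n]$ splits the $n$ letters of $w$ and the $m$ letters of $w'[n]$ between prefix and suffix. In particular, if the prefix of $z$ uses $k$ letters of $w$ and $\ell$ letters of $w'[n]$, then the left tensor factor is a shuffle of a $k$-letter prefix of $w$ with an $\ell$-letter prefix of $w'$ shifted by $k$ — which is consistent because after standardizing the $k$-letter prefix of $w$ occupies exactly $[k]$. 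Verifying that this matches on the nose for every cut point is the only place where care is needed, and it is routine; I would present it compactly rather than exhaustively, as the underlying combinatorial identity is well known from \cite{PR}.
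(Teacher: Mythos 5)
Your proposal is correct and takes essentially the same route as the paper: both reduce compatibility to the word-level identity for the shuffle product and the standardized deconcatenation coproduct (the classical Poirier--Reutenauer bialgebra fact) and then sum over $K$-Knuth representatives. The bijection you describe between cut points of a shuffle and pairs of cut points of the factors is precisely the content behind the paper's closing remark that the two expressions are ``easily seen to be equal.''
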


\begin{proof}
 The result follows from the fact that the same is true for initial words. Indeed, if $u = a_1 \ldots a_n$ and $w=b_1 \ldots b_m$ are two initial words, 
then 
$$\Delta(w \cdot u) = \Delta(w \shuffle u[n]) = \sum_{v = c_1 \ldots c_{n+m}} \sum_{i=1}^{n+m} \overline {c_1 \ldots c_i} \otimes \overline {c_{i+1} \ldots c_{n+m}},$$
where $v$ ranges over shuffles of $w$ and $u[n]$. On the other hand,  
$$\Delta(w) \cdot \Delta(u) = \left(\sum_{i=1^n} \overline  {a_1 \ldots a_i} \otimes \overline {a_{i+1} \ldots a_n}\right) \cdot 
\left(\sum_{j=1^m} \overline {b_1 \ldots b_j} \otimes \overline {b_{j+1} \ldots b_m}\right)=$$
$$= \sum_{i,j} (\overline {a_1 \ldots a_i} \shuffle \overline {b_1 \ldots b_j} [i]) \otimes (\overline {a_{i+1} \ldots a_n} \shuffle \overline {b_{j+1} \ldots b_m}[n-i]).$$
The two expressions are easily seen to be equal. 
\end{proof}

We also remark that $KPR$ has no antipode (see \cite{ShS} for a definition). Indeed, assume $S$ is an antipode. Then since 
$$\Delta([[1]]) = \emptyset \otimes [[1]] + [[1]] \otimes [[1]] + [[1]] \otimes \emptyset,$$
we solve 
$$S([[1]]) = - \frac{[[1]]}{\emptyset + [[1]]}.$$

This final expression is not a finite linear combination of basis elements of $KPR$, and thus does not lie in the bialgebra.  

\section{Unique Rectification Targets}

As we have seen, $K$-Knuth equivalence classes may have several corresponding insertion tableaux. The following is an open problem.

\begin{problem}
 Describe $K$-Knuth equivalence classes of increasing tableaux. 
\end{problem}

Of special importance are the $K$-Knuth equivalence classes with only one element. 

\subsection{Definition and examples}

We call $T$ a \textit{unique rectification target} or a URT if it is the only tableau in its $K$-Knuth equivalence class \cite[Definition 3.5]{BS}. In other words,
$T$ is a URT if for every $w\equiv \r(T)$ we have $P(w)=T$. The terminology is natural in the context of the $K$-theoretic jeu de taquin of Thomas and Yong \cite{ThY}.
If $P(w)$ is a URT, we call the equivalence class of $w$ a unique rectification class.

For example, $P(1342)$ is not a unique rectification target
because $3124\equiv 34124$, as shown in Example \ref{ex:3124}, and $P(3124)\neq P(34124)$ as shown below. 
\begin{center}
$P(3124)=$
\begin{ytableau}
 1 & 2 & 4 \\
3
\end{ytableau}\hspace{1in}
$P(34124)=$
\begin{ytableau}
1 & 2 & 4 \\
3 & 4
\end{ytableau}
\end{center}

It follows that $[[3124]]$ is not a unique rectification class.

In \cite{BS}, Buch and Samuels give a uniform construction of unique rectification targets of any shape as follows. Define the \textit{minimal increasing tableau} $M_\lambda$
of shape $\lambda$ by filling the boxes of $\lambda$ with the smallest possible values allowed in an increasing tableau. In other words, $M_\lambda$ is the tableau obtained by 
filling all boxes in the $k$th southwest to northeast diagonal of $\lambda$ with 
positive integer $k$.

\begin{example}
 The tableaux below are minimal increasing tableaux.
\begin{center}
$M_{(3,2,1)}=$
\begin{ytableau}
 1 & 2 & 3 \\
2 & 3 \\
3
\end{ytableau}\hspace{1in}
$M_{(5,2,1,1)}=$
\begin{ytableau}
1 & 2 & 3 & 4 & 5 \\
2 & 3 \\
3 \\
4
\end{ytableau}
\end{center}
\end{example}

In \cite[Theorem 1.2]{ThY2}, Thomas and Yong prove that the superstandard tableaux of shape $\lambda$, $S_\lambda$, is a URT, where $S_\lambda$ is defined to be 
the standard Young tableau with $1,2,\ldots,\lambda_1$ in the first row, $\lambda_1+1,\lambda_1+2,\ldots, \lambda_1+\lambda_2$ in the second row, etc. 

\begin{example} The following are superstandard tableaux.
 \begin{center}
$S_{(3,2,1)}=$
\begin{ytableau}
 1 & 2 & 3 \\
4 & 5 \\
6
\end{ytableau}\hspace{1in}
$S_{(5,2,1,1)}=$
\begin{ytableau}
 1 & 2 & 3 & 4 & 5 \\
6 & 7 \\
8 \\
9
\end{ytableau}
\end{center}
\end{example}

\begin{problem}
 Characterize all unique rectification targets or at least provide an efficient algorithm to determine if a given tableau is a URT.
\end{problem}

\begin{remark} \label{rem:whyrestrict}
 Note that if one uses the less restrictive Hecke equivalence of \cite{BKSTY} instead of the $K$-knuth equivalence of \cite{BS}, URT are extremely scarce.
For example, the tableau with reading word $3412$ is equivalent to the tableau with reading word $3124$. In fact, with this definition
there is no standard URT of shape $(2,2)$.  
\end{remark}

\subsection{Product and coproduct of unique rectification classes}

As we have seen before, the product and coproduct of insertion classes do not necessarily decompose into insertion classes. However, the story is different 
if the classes are unique rectification classes, as seen in the following theorems.

\begin{theorem} \label{thm:ins}
Let $T_1$ and $T_2$ be two URT. 
Then we have 
$$\left(\sum_{P(w)=T_1} w \right) \cdot \left(\sum_{P(w)=T_2} w \right) =
\displaystyle\sum_{T\in T(T_1 \shuffle T_2)}\sum_{P(w)=T} w,$$
where $T(T_1 \shuffle T_2)$ is the finite set of tableaux $T$ such that $T|_{[n]} = T_1$ and $P({\r(T)|_{[n+1,n+m]}}) = T_2$.
\end{theorem}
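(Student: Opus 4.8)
The plan is to bootstrap Theorem \ref{thm:ins} from the already-proven Theorem \ref{thm:hecktoins} by choosing the representative words carefully. Let $T_1$ be a URT on the alphabet $[n]$ and $T_2$ a URT on $[m]$. Set $h = \r(T_1)$ and $h' = \r(T_2)$; by Lemma \ref{lem:insrowword} we have $P(h) = T_1$ and $P(h') = T_2$. Since $T_1$ is a URT, its $K$-Knuth equivalence class contains exactly one tableau, so in the notation of Theorem \ref{thm:hecktoins} the set $\mathscr{T} = \{P(h)\} = \{T_1\}$ is a singleton. The product $\left(\sum_{P(w)=T_1} w\right)\cdot\left(\sum_{P(w)=T_2} w\right)$ is by definition $[[h]]\cdot[[h']]$ restricted to the insertion class of $T_1$ on the left factor and of $T_2$ on the right — but because $\mathscr{T}$ is a singleton, $[[h]]$ is exactly $\sum_{P(w)=T_1} w$, and likewise $[[h']]=\sum_{P(w)=T_2} w$ up to the shift by $n$. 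So the left-hand side of Theorem \ref{thm:ins} is literally $[[h]]\cdot[[h']]$.

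Next I would apply Theorem \ref{thm:hecktoins} directly: it gives
$$[[h]] \cdot [[h']] = \sum_{T\in T(h \shuffle h')}\sum_{P(w)=T} w,$$
where $T(h\shuffle h')$ consists of the tableaux $T$ with $T|_{[n]}\in\mathscr{T}$ and $\r(T)|_{[n+1,n+m]}\equiv h'[n]$. Since $\mathscr{T}=\{T_1\}$, the first condition becomes $T|_{[n]} = T_1$. For the second condition, I need to rewrite $\r(T)|_{[n+1,n+m]}\equiv h'[n]$ as $P(\r(T)|_{[n+1,n+m]}) = T_2$. This is exactly where the URT hypothesis on $T_2$ enters again: shifting alphabets by $n$ is harmless for both Hecke insertion and $K$-Knuth equivalence (the relations only involve relative order), so $\r(T)|_{[n+1,n+m]}\equiv h'[n]$ iff $\r(T)|_{[n+1,n+m]}$ unshifted is $K$-Knuth equivalent to $h' = \r(T_2)$; and since $T_2$ is a URT, every word $K$-Knuth equivalent to $\r(T_2)$ inserts to $T_2$ and conversely (by Theorem \ref{cor:imply}, insertion equivalence implies $K$-Knuth equivalence, and URT means the $K$-Knuth class has the single tableau $T_2$). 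Hence $\r(T)|_{[n+1,n+m]}\equiv h'[n]$ is equivalent to $P(\r(T)|_{[n+1,n+m]}) = T_2$, which is the condition defining $T(T_1\shuffle T_2)$ in the statement. Thus $T(h\shuffle h') = T(T_1\shuffle T_2)$ and the two sums coincide. Finiteness is inherited from Theorem \ref{thm:hecktoins} (or directly from Lemma \ref{lem:fin}).

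The main obstacle — really the only subtle point — is the equivalence of the two forms of the right-hand constraint on $T$, namely translating between "$\r(T)$ restricted to the top alphabet is $K$-Knuth equivalent to $h'[n]$" and "its insertion tableau equals $T_2$." This hinges squarely on $T_2$ being a URT: without it, the $K$-Knuth class could contain several tableaux and the two conditions would genuinely differ (this is precisely the phenomenon recorded in Remark \ref{rem:prod}). I would state this translation as a short explicit lemma or an inline paragraph, invoking the definition of URT, Theorem \ref{cor:imply}, and Lemma \ref{restricttointerval} (to know that restricting $\r(T)$ to an interval respects $K$-Knuth equivalence, so the condition is well-defined on the class). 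Everything else is bookkeeping about the shift $w\mapsto w[n]$, which commutes with both insertion and the $K$-Knuth relations and should be dispatched in one sentence.
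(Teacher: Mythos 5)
Your proposal is correct and follows essentially the same route as the paper's own (much terser) proof: since $T_1$ and $T_2$ are URTs, the insertion-class sums coincide with $[[\r(T_1)]]$ and $[[\r(T_2)]]$, and the statement reduces to Theorem \ref{thm:hecktoins} with $\mathscr{T}=\{T_1\}$. Your explicit justification of the translation between the conditions $\r(T)|_{[n+1,n+m]}\equiv \r(T_2)[n]$ and $P(\r(T)|_{[n+1,n+m]})=T_2$ via the URT property of $T_2$ and Theorem \ref{cor:imply} is exactly the point the paper leaves implicit.
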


\begin{proof}
 Since $T_1$ and $T_2$ are URTs, the left hand side is $([[\r(T_1)]])\cdot([[\r(T_2)]])$ and  $T(T_1 \shuffle T_2)$ is $T(\r(T_1)\shuffle\r(T_2))$ as
in Theorem \ref{thm:hecktoins}.
\end{proof}

\begin{theorem} \label{thm:coins}
Let $T_0$ be a URT. We have 
$$\Delta \left(\sum_{P(w)=T_0} w \right) =
\displaystyle\sum_{(T',T'') \in T(T_0)} \left( \sum_{P(w)= \overline{T'}} w \right) \otimes \left( \sum_{P(w)= \overline{T''}} w \right),$$
where $T(T_0)$ is the finite set of pairs of tableaux $T',T''$ such that $P(\r(T') \r(T'')) = T_0$.
\end{theorem}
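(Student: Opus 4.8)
The plan is to deduce Theorem~\ref{thm:coins} from Theorem~\ref{thm:cohecktoins} exactly as Theorem~\ref{thm:ins} was deduced from Theorem~\ref{thm:hecktoins}, using the hypothesis that $T_0$ is a URT to collapse the $K$-Knuth machinery into a statement purely about insertion classes. First I would observe that since $T_0$ is a URT, its $K$-Knuth equivalence class contains no other increasing tableau, so by Lemma~\ref{lem:hecktoins} the sum $\sum_{P(w)=T_0} w$ equals $[[\r(T_0)]]$: the insertion class of $T_0$ coincides with the full $K$-Knuth class of $\r(T_0)$. Hence the left-hand side of the claimed identity is simply $\Delta([[\r(T_0)]])$, to which Theorem~\ref{thm:cohecktoins} applies directly with $h = \r(T_0)$.

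Next I would unpack what Theorem~\ref{thm:cohecktoins} gives in this case. It writes $\Delta([[\r(T_0)]])$ as $\sum_{(T',T'')\in T(\r(T_0))} \bigl(\sum_{P(w)=\overline{T'}} w\bigr) \otimes \bigl(\sum_{P(w)=\overline{T''}} w\bigr)$, where $T(\r(T_0))$ is the set of pairs of tableaux $(T',T'')$ with $\r(T')\r(T'') \equiv \r(T_0)$. So the only gap between this and the asserted formula is the replacement of the indexing condition ``$\r(T')\r(T'') \equiv \r(T_0)$'' by ``$P(\r(T')\r(T'')) = T_0$'': I must show these two conditions describe the same set of pairs $(T',T'')$ (and that the corresponding summands agree, which is automatic once the index sets match).

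The key step is precisely this equivalence of conditions, and it is where the URT hypothesis is essential. If $P(\r(T')\r(T'')) = T_0$, then by Theorem~\ref{cor:imply} (insertion equivalence implies $K$-Knuth equivalence) we get $\r(T')\r(T'') \sim \r(T_0)$ hence $\r(T')\r(T'') \equiv \r(T_0)$, so one inclusion holds with no hypothesis on $T_0$. Conversely, suppose $\r(T')\r(T'') \equiv \r(T_0)$. Let $P(\r(T')\r(T'')) = T$, an increasing tableau. Then $\r(T) \equiv \r(T')\r(T'') \equiv \r(T_0)$ again by Theorem~\ref{cor:imply}, so $T$ is an increasing tableau $K$-Knuth equivalent to $T_0$; since $T_0$ is a URT, $T = T_0$. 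This gives the reverse inclusion and completes the identification of the two index sets, hence the theorem. I would also note in passing that finiteness of $T(T_0)$ is inherited from the finiteness of $T(\r(T_0))$ asserted in Theorem~\ref{thm:cohecktoins}.

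I do not expect any serious obstacle here: the argument is a direct mirror of the proof of Theorem~\ref{thm:ins}, and the only nontrivial input — that a URT is the unique increasing tableau in its class — is exactly its defining property. The one point requiring a moment's care is making sure that when the index condition is rewritten, the summands $\sum_{P(w)=\overline{T'}} w$ and $\sum_{P(w)=\overline{T''}} w$ are left untouched (they depend only on the shapes and standardizations $\overline{T'},\overline{T''}$, not on the condition relating $T'T''$ to $T_0$), so that matching the index sets genuinely matches the two sides term by term.
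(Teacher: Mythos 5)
Your proposal is correct and follows essentially the same route as the paper: identify the left-hand side with $\Delta([[\r(T_0)]])$ via the URT hypothesis and apply Theorem~\ref{thm:cohecktoins}, checking that $T(T_0)=T(\r(T_0))$. The paper simply asserts this last identification, whereas you spell out the two inclusions using Theorem~\ref{cor:imply} and uniqueness of the tableau in the class of a URT; this is a faithful (and more detailed) rendering of the intended argument.
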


\begin{proof}
 Since $T_0$ is a URT, the left hand term is $\Delta([[\r(T_0)]])$ and $T(T_0)=T(\r(T_0))$ as described in Theorem \ref{thm:cohecktoins}.
\end{proof}



\begin{remark}
Note that a product of unique rectification classes is not necessarily  
a sum of unique rectification classes. For example, if we let $w'=12$ and $w''[2]=34$, then $13422$ appears
in the shuffle product. One checks $P(13422)$ is one of the tableaux in Example \ref{ex:3124}  
and thus is not a URT.

Similarly, the coproduct of a unique rectification class does not necessarily decompose into
unique rectification classes. Consider $T_0$, $T'$, $T''$, and $T'''$ below.
\begin{center}
 $T_0=$
\begin{ytableau}
  1 & 2 & 4 \\
3 & 5
 \end{ytableau}\hspace{.5in}
$T'=$
\begin{ytableau}
 1 & 2 & 5 \\
3
\end{ytableau}\hspace{.5in}
$T''=$
\begin{ytableau}
2 & 4 
\end{ytableau}\hspace{.5in}
$T'''=$
\begin{ytableau}
1 & 2 & 5 \\
3 & 5
\end{ytableau}

\end{center}
One can check that $T_0$ is a URT and $P(\r(T')\r(T''))=P(312524)=T_0$, but 
$T'$ is not a URT since it is equivalent to $T'''$.
\end{remark}






\section{Connection to symmetric functions}

\subsection{Symmetric functions and stable Grothendieck polynomials}

\

We denote the ring of symmetric functions in an infinite number of variables $x_i$ by $$\Lambda = \Lambda(x_1, x_2, \ldots) = \oplus_{n \geq 0} \Lambda_n.$$ 
The $n^{\text{th}}$ graded component, $\Lambda_n$, consists of homogenous elements of degree $n$ with $\Lambda_0 = \mathbb R$.
There are several important bases of $\Lambda$ indexed by partitions $\lambda$ of integers. The two most notable such bases
are the monomial symmetric functions, $m_{\lambda}$, and Schur functions, $s_{\lambda}$.
We refer the reader to \cite{EC2} for definitions and further details on the ring $\Lambda$. 

For $f(x_1, x_2, \ldots) \in \Lambda$ let 
$$\Delta(f) = f(y_1,\ldots; z_1, \ldots) \in \Lambda(y_1, \ldots) \otimes \Lambda(z_1, \ldots)$$ be the result of splitting the alphabet of $x_i$'s
into two disjoint alphabets of $y_i$'s and $z_i$'s. $\Lambda$ is known to be a bialgebra with this coproduct, see \cite{Zel}.

Let us denote by $\hat \Lambda$ the completion of $\Lambda$, which consists of possibly infinite linear combinations of $m_{\lambda}$'s. 
Each element of $\hat \Lambda$ can be split into graded components, each being a finite linear combination of the $m_{\lambda}$'s. 
Also, let $\hat {\Lambda \otimes \Lambda}$ denote the completion of $\Lambda \otimes \Lambda$, 
consisting of possibly infinite linear combinations of $m_{\lambda} \otimes m_{\mu}$'s.
It is not hard to see that the completion $\hat \Lambda$ inherits a bialgebra structure from $\Lambda$ in the following sense. 

\begin{theorem}
 If $f,g  \in \hat \Lambda$ then $$f \cdot g  = \sum_{\mu} c_{\mu} m_{\mu} \in \hat \Lambda.$$ If $f \in \hat \Lambda$, then 
$$\Delta(f) = \sum_{\mu, \nu} c_{\mu, \nu} m_{\mu} \otimes m_{\nu} \in \hat {\Lambda \otimes \Lambda}.$$ Furthermore, the 
coefficients $c_{\mu}$ and $c_{\mu, \nu}$ in both expressions are unique. 
\end{theorem}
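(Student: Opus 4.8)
The plan is to reduce the assertion about $\hat\Lambda$ to the already-understood behaviour of the monomial symmetric functions $m_\lambda$ under the two operations, working grade by grade. First I would recall that every $f \in \hat\Lambda$ decomposes uniquely as $f = \sum_n f_n$ with $f_n \in \Lambda_n$ a \emph{finite} linear combination of the $m_\lambda$ with $|\lambda| = n$; this is the definition of $\hat\Lambda$ together with the fact that the $m_\lambda$ form a basis of $\Lambda$, so uniqueness of coefficients is immediate in each grade and hence overall. The point of the theorem is merely that the two operations, applied to infinite sums, land back in the completion rather than producing coefficients that fail to be well defined.

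For the product, I would write $f = \sum_n f_n$ and $g = \sum_m g_m$ and set $f \cdot g := \sum_k \big(\sum_{n+m=k} f_n g_m\big)$. The inner sum is \emph{finite} because multiplication in $\Lambda$ is graded: $f_n g_m \in \Lambda_{n+m}$, so only pairs with $n+m=k$ contribute to grade $k$, and each $f_n g_m$ is a finite combination of $m_\mu$ with $|\mu| = k$ since $f_n, g_m$ are finite combinations and $\Lambda_{n+m}$ is finite-dimensional. Thus the grade-$k$ component is a finite $\mathbb{R}$-linear combination of $m_\mu$, giving an element of $\hat\Lambda$; collecting over all $k$ produces the coefficients $c_\mu$, which are unique because the $m_\mu$ are linearly independent within their grade. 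For the coproduct, the argument is parallel: $\Delta$ is also graded, $\Delta(\Lambda_n) \subseteq \bigoplus_{i+j=n} \Lambda_i(y) \otimes \Lambda_j(z)$, so $\Delta(f) := \sum_n \Delta(f_n)$ has, in each bidegree $(i,j)$, a contribution only from $f_{i+j}$, which is a finite combination of $m_\lambda$, and $\Delta(m_\lambda)$ is a finite combination of $m_\mu \otimes m_\nu$ (indeed $\Delta(m_\lambda) = \sum m_\mu \otimes m_\nu$ over ways of splitting the parts of $\lambda$). Hence $\Delta(f) \in \hat{\Lambda \otimes \Lambda}$ with uniquely determined coefficients $c_{\mu,\nu}$.

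I do not anticipate a genuine obstacle here — the content is entirely bookkeeping about gradings and the finite-dimensionality of each $\Lambda_n$. The one point that deserves a careful sentence is \emph{well-definedness}: a priori the coefficient of a fixed $m_\mu$ in $f \cdot g$ could receive contributions from infinitely many pairs $(f_n, g_m)$, and the whole theorem is the observation that the grading forbids this, so the naive definition actually makes sense. The same remark, applied to bidegrees, handles the coproduct. Uniqueness then follows formally from the basis property of the $m_\lambda$ (resp.\ $m_\mu \otimes m_\nu$) in each graded (resp.\ bigraded) piece, so that $\hat\Lambda$ inherits a bona fide bialgebra structure and the subsequent identification of $\psi$ or $\phi$ with a bialgebra morphism into $\hat\Lambda$ is meaningful.
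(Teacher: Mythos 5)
Your proposal is correct and is essentially the paper's own argument: the paper simply observes that each $m_\mu$ (resp.\ $m_\mu \otimes m_\nu$) can receive contributions from only finitely many terms of the product (resp.\ coproduct), which is precisely the grading/finite-dimensionality bookkeeping you spell out, with uniqueness following from the basis property of the $m_\lambda$. Your version is just a more detailed write-up of the same reduction to graded components.
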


\begin{proof}
 It is easy to see that each $m_{\mu}$ in the first case and each $m_{\mu} \otimes m_{\nu}$ in the second case can appear only in finitely many terms on the left. 
The claim follows. 
\end{proof}

Recall the definition of a set-valued tableau given in Section \ref{sec:rec}. Given a set-valued tableau $T$, let 
$x^T$ be the monomial in which the exponent of $x_i$ is the number of occurences of the letter $i$ in $T$. Let $|T|$ be the 
degree of this monomial.
\begin{example}
The tableau shown below has $x^T=x_1x_3x_4x_5^2x_6^3x_8x_9$ and $|T|=11$.
\begin{center}
 \begin{ytableau}
  134 & 4 & 568 \\
56 & 6 \\
9
 \end{ytableau}
\end{center}
\end{example}

In \cite{B}, Buch proves a combinatorial interpretation of the single stable Grothendieck polynomials indexed by partitions, $G_\lambda$, which we 
present as the definition. This interpretation is implicitly present in the earlier paper \cite{FG}.

\begin{theorem}\cite[Theorem 3.1]{B}
 The single stable Grothendieck polynomial $G_{\lambda}$ is given by the formula 
$$G_\lambda=\displaystyle\sum_T(-1)^{|T|-|\lambda|}x^T,$$
where the sum is over all set-valued tableaux $T$ of shape $\lambda$.
\end{theorem}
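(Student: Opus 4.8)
The statement is presented here as the definition of $G_\lambda$; relative to the original definition of the stable Grothendieck polynomial --- namely the stable limit $\lim_{k\to\infty}\mathfrak G_{w_\lambda(k)}$ of the Grothendieck polynomials of Lascoux and Sch\"utzenberger along Grassmannian permutations of shape $\lambda$ --- it is Buch's Theorem~3.1 of \cite{B}, and it is this identification I would prove. Write $\tilde G_\lambda=\sum_T(-1)^{|T|-|\lambda|}x^T$, the sum over set-valued tableaux of shape $\lambda$; it is a well-defined element of $\hat\Lambda$ since only finitely many such $T$ use any fixed finite alphabet. Because $\mathfrak G_w$ is symmetric for Grassmannian $w$, the limit $G_\lambda$ is symmetric, and the plan is to expand $G_\lambda$ monomial-by-monomial and match it with $\tilde G_\lambda$ through Hecke insertion --- the correspondence of Section~2.

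First I would invoke the Fomin--Kirillov monomial expansion of a Grothendieck polynomial, $\mathfrak G_w=\sum_{(\mathbf a,\mathbf b)}(-1)^{\ell(\mathbf a)-\ell(w)}x_{\mathbf b}$, where $\mathbf a$ runs over words whose Hecke product is $w$ and $\mathbf b$ runs over sequences compatible with $\mathbf a$ in the usual sense. Specializing $w$ to a sequence of Grassmannian permutations with a single descent pushed to infinity, and tracking the standard shift of the alphabet, this rewrites $G_\lambda$ as a signed sum over pairs $(\mathbf a,\mathbf b)$ with $\mathbf a$ a ``Hecke word of shape $\lambda$'' and $\mathbf b$ a compatible sequence, with monomial recorded by $\mathbf b$ and sign $(-1)^{\ell(\mathbf a)-|\lambda|}$.

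Then I would apply Hecke insertion (Theorem~\ref{thm:bij}) to $\mathbf a$, producing an increasing tableau $P(\mathbf a)$ --- of shape $\lambda$ for a Hecke word of shape $\lambda$ --- together with a set-valued recording tableau $Q(\mathbf a)$ of the same shape, having $\ell(\mathbf a)$ entries counted with multiplicity. The compatible sequence $\mathbf b$ relabels the entries of $Q(\mathbf a)$, and the crux is that the compatibility condition on $\mathbf b$ relative to the descent set of $\mathbf a$ translates exactly into the two semistandard-type inequalities defining a set-valued tableau. This would give a content- and sign-preserving bijection between the pairs indexing $G_\lambda$ and the set-valued tableaux of shape $\lambda$ indexing $\tilde G_\lambda$, hence $G_\lambda=\tilde G_\lambda$.

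I expect the main obstacle to be precisely the bookkeeping of the last two steps: getting the stabilization right (which Grassmannian permutations, how the alphabet shift interacts with descents of Hecke words) and, above all, verifying that Hecke insertion carries the compatible-sequence constraint onto set-valued semistandardness \emph{bijectively}. A self-contained alternative, avoiding Grothendieck polynomials altogether, is to prove symmetry of $\tilde G_\lambda$ directly via a set-valued analogue of the Bender--Knuth involution --- which itself requires genuine care on boxes containing both $i$ and $i+1$ --- observe that its degree-$|\lambda|$ component is $s_\lambda$ (only singleton-filled tableaux contribute), and then match $\tilde G_\lambda$ with $G_\lambda$ through a one-variable branching recursion obtained by classifying which cells may receive the largest letter.
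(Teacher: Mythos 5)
The paper does not actually prove this statement: it is quoted from \cite{B} and explicitly adopted ``as the definition'' of $G_\lambda$, so there is no internal proof to compare yours against. What would need proving is exactly what you identify --- that the set-valued tableau sum equals the stable limit of the Lascoux--Sch\"utzenberger Grothendieck polynomials --- and your first route is in essence Buch's original argument: expand $\mathfrak G_w$ via the Fomin--Kirillov compatible-pair formula \cite{FK} and biject, for $w$ Grassmannian of shape $\lambda$, the pairs $(\mathbf a,\mathbf b)$ with set-valued tableaux. (Buch carries out that bijection directly from the explicit structure of Hecke words of a $321$-avoiding permutation; Hecke insertion postdates his paper, so your use of it is a reformulation rather than a divergence.)

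Judged as a proof, however, the sketch has a genuine gap at precisely the point you flag: the central bijection is asserted, not constructed, and two specific facts are doing all the work. First, you need that every Hecke word $\mathbf a$ of $w_\lambda$ inserts into one and the same increasing tableau of shape $\lambda$; otherwise the pairs $(\mathbf a,\mathbf b)$ correspond under Theorem \ref{thm:bij} to pairs $(P,Q)$ with varying $P$, and the count does not collapse to a count of recording tableaux alone. This uniqueness is special to Grassmannian permutations and is not a formal consequence of the insertion bijection --- compare Example \ref{ex:3124}, where equivalent words insert into different tableaux. Second, the translation of the compatibility condition on $\mathbf b$ into set-valued semistandardness is more delicate than ``Lemma \ref{lem:below} handles it'': when consecutive letters of $\mathbf a$ label the \emph{same} box of $Q(\mathbf a)$, a weak inequality $b_j\le b_{j+1}$ would produce a multiset entry, i.e.\ a weak set-valued tableau rather than a set-valued one. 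Getting genuine sets (hence $G_\lambda$ rather than $J_\lambda$) forces the strict/weak pattern of the compatible sequences to be matched exactly to the insertion behaviour; this is the same weak-versus-strict tension that obliges the present paper to introduce $J_\lambda$ alongside $G_\lambda$ in Section 5, and it cannot be waved through. Your alternative route (set-valued Bender--Knuth plus a branching recursion) is plausible but is a second project in itself, not a patch. In short: correct strategy, correctly identified obstacles, but the proof is not yet there.
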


\begin{example}
 We have $$G_{(2,1)}=x_1^2x_2+2x_1x_2x_3-x_1^2x_2^2-2x_1^2x_2x_3-8x_1x_2x_3x_4+\ldots,$$ where, for example 
the coefficient of $x_1^2x_2x_3$ is $-2$ because of the tableaux shown below and the fact that for each of them, $|T|-|\lambda|=1$.
\begin{center}
\begin{ytableau}
 1 & 1 2\\
3
\end{ytableau}\hspace{1in}
\begin{ytableau}
 1 & 13 \\
2
\end{ytableau}
\end{center}
\end{example}

The following claim is not surprising. 

\begin{lemma}
 Each element $f \in \hat \Lambda$ can uniquely be written as 
$$f = \sum_{\mu} c_{\mu} G_{\mu}.$$
Similarly, each element of $\hat {\Lambda \otimes \Lambda}$ can uniquely be written as $$\sum_{\mu, \nu} c_{\mu, \nu} G_{\mu} \otimes G_{\nu}.$$
\end{lemma}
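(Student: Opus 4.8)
The plan is to mimic the standard argument that the $G_\mu$'s form a "topological basis" of $\widehat\Lambda$, using the fact that $G_\mu$ is a filtered/graded deformation of the Schur function $s_\mu$. First I would record the key structural fact: writing $G_\mu = s_\mu + (\text{higher degree terms})$, more precisely
$$G_\mu = \sum_{\nu \supseteq \mu,\ |\nu|\ge|\mu|} g_{\mu\nu}\, s_\nu, \qquad g_{\mu\mu}=1,$$
where the sum is over partitions $\nu$ obtained by adding boxes to $\mu$ (this follows from Buch's set-valued tableau description: the degree-$|\mu|$ part of $G_\mu$ counts set-valued tableaux with all singleton entries, i.e. ordinary semistandard tableaux, giving $s_\mu$, and every monomial of higher degree has a partition shape $\nu$ strictly containing $\mu$ by the column-strict/row-weak conditions). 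The point is that for each fixed degree $n$, the transition matrix from $\{G_\mu : |\mu|\le n\}$ to $\{s_\lambda : |\lambda|\le n\}$, when we truncate each $G_\mu$ to total degree $\le n$, is unitriangular with respect to the partial order "$\mu \preceq \nu$ iff $\mu \subseteq \nu$" refined to a total order by, say, $|\mu|$ first then any fixed linear extension; hence it is invertible over $\mathbb{R}$ (indeed over $\mathbb{Z}$).

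Next I would carry out the existence-and-uniqueness argument degree by degree. Given $f = \sum_\lambda a_\lambda m_\lambda \in \widehat\Lambda$, split it into graded pieces $f = \sum_n f_n$ with $f_n \in \Lambda_n$ finite-dimensional. I construct the coefficients $c_\mu$ inductively on $|\mu|$: having determined $c_\mu$ for all $|\mu| < n$ so that $f - \sum_{|\mu|<n} c_\mu G_\mu$ has vanishing components in degrees $< n$, look at the degree-$n$ component $r_n$ of this difference; since $r_n \in \Lambda_n$ is a finite linear combination of Schur functions $s_\lambda$, $|\lambda|=n$, and since the degree-$n$ parts of the $G_\mu$ with $|\mu|=n$ are exactly $\{s_\mu : |\mu|=n\}$ (a basis of $\Lambda_n$), there is a unique choice of $\{c_\mu : |\mu|=n\}$ with $r_n = \sum_{|\mu|=n} c_\mu s_\mu$; subtracting $\sum_{|\mu|=n} c_\mu G_\mu$ then kills degree $n$ and only modifies strictly higher degrees, so the induction continues. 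The resulting (generally infinite) sum $\sum_\mu c_\mu G_\mu$ lies in $\widehat\Lambda$ and equals $f$, because in each fixed degree only finitely many $G_\mu$ (those with $|\mu|\le n$) contribute, and the construction was designed to match $f$ degree by degree. Uniqueness is immediate from the same triangularity: if $\sum_\mu c_\mu G_\mu = 0$ in $\widehat\Lambda$, look at the smallest degree $n$ with some $c_\mu \ne 0$, $|\mu|=n$; the degree-$n$ component forces $\sum_{|\mu|=n} c_\mu s_\mu = 0$, hence all such $c_\mu=0$, a contradiction.

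The $\widehat{\Lambda\otimes\Lambda}$ statement is the same argument run on the bigraded algebra: $\{G_\mu \otimes G_\nu\}$ expands in $\{s_\mu \otimes s_\nu\}$ unitriangularly with respect to $(\mu,\nu)\subseteq(\mu',\nu')$ ordered first by total degree $|\mu|+|\nu|$, and the $\{s_\mu \otimes s_\nu\}$ with $|\mu|+|\nu|=n$ form a basis of $\bigoplus_{i+j=n}\Lambda_i\otimes\Lambda_j$; run the same degree-by-degree peeling. Since the previous lemma already gives the same statement for $\{m_\mu\}$ and $\{m_\mu\otimes m_\nu\}$, one may alternatively phrase the whole thing as: the change-of-basis from $\{m_\mu\}$ to $\{G_\mu\}$ (composing $m\to s\to G$) is, in each degree, an invertible finite matrix, and such "degreewise invertible" substitutions clearly extend to bijections of the completion onto itself.

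I expect the main obstacle to be purely expository rather than mathematical: making the triangularity statement $G_\mu = \sum_{\nu\supseteq\mu} g_{\mu\nu}s_\nu$ with $g_{\mu\mu}=1$ precise and justifying it cleanly from Buch's theorem (that every monomial in $G_\mu$ of degree $>|\mu|$ comes from a set-valued filling whose ``support shape'' strictly contains $\lambda$, so that after extracting the Schur expansion only $\nu\supsetneq\mu$ occur) — and then being careful that the infinite sum $\sum_\mu c_\mu G_\mu$ genuinely converges in $\widehat\Lambda$, i.e. that each graded component is finite, which it is because $G_\mu$ has no terms of degree $<|\mu|$. Everything else is the standard ``unitriangular matrices are invertible, one degree at a time'' bookkeeping.
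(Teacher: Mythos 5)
Your proof is correct and is essentially the paper's argument: both rest on the triangularity of the $G_\mu$ with respect to a filtration (the lowest-degree component of $G_\mu$ being $s_\mu$, hence no terms of degree $<|\mu|$) and then recover the coefficients by greedy elimination, degree by degree. The only cosmetic difference is that the paper eliminates \emph{minimal monomial terms} $m_\lambda$ under a total order refining reverse dominance, while you peel off graded components using the Schur basis of each $\Lambda_n$; your version even avoids needing the refined order within a fixed degree, and the containment claim $\nu\supseteq\mu$ you worry about is not actually needed for the induction.
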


\begin{proof}
Fix any complete order on monomials $m_{\lambda}$ that agrees with the reverse dominance order for a fixed size $|\lambda|$ 
and satisfies $m_{\mu} < m_{\lambda}$ for $|\mu|<|\lambda|$. See, for example, \cite{MS} for details. 
Then $m_{\lambda}$ is the minimal term of $G_{\lambda}$, and we can uniquely recover coefficients $c_{\mu}$ by using $G_{\lambda}$'s
to eliminate minimal terms in $f$. The proof of the second claim is similar.
\end{proof}

What is surprising, however, is the following two theorems proven by Buch in \cite{B}. A priori, the products and the coproducts of $G_{\lambda}$'s
do not have to decompose into {\emph {finite}} linear combinations.  

\begin{theorem} \cite[Corollary 5.5]{B} \label{thm:prodfin}
 We have $$G_{\lambda} G_{\mu} = \sum_{\nu} c_{\lambda,\mu}^{\nu} G_{\nu},$$ where the sum on the right is over a finite set of partition shapes $\nu$.
\end{theorem}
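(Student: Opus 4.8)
The plan is to realize $G_\lambda$ as the image of a bialgebra element under a morphism to $\hat\Lambda$, and to deduce the finiteness of the product expansion from the finiteness of shuffle products of words. First I would set up the bialgebra morphism $\phi : KPR \to \hat\Lambda$ (the map $\phi$ referred to in the plan of the paper) and observe that $\phi([[h]])$ expands as a finite sum of weak stable Grothendieck polynomials $J_\mu$, hence — after the sign comparison between the $J_\mu$ and $G_\mu$ established earlier in this section — as a finite $\mathbb Z$-linear combination of the $G_\mu$. Concretely, I would pick an initial word $h$ with $P(h)$ a URT of shape $\lambda$ (for instance take $h = \r(M_\lambda)$ or $h = \r(S_\lambda)$, using that these are URTs by the results cited in Section 4), so that $\phi$ of the corresponding unique rectification class is, up to sign, $G_\lambda$ itself.

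The key step is then to compute $G_\lambda G_\mu$ via the product in $KPR$. Choosing URTs $T_1, T_2$ of shapes $\lambda, \mu$, Theorem \ref{thm:ins} expresses $\bigl(\sum_{P(w)=T_1} w\bigr)\cdot\bigl(\sum_{P(w)=T_2} w\bigr)$ as a \emph{finite} sum $\sum_{T\in T(T_1\shuffle T_2)}\sum_{P(w)=T} w$, where $T(T_1\shuffle T_2)$ is finite by Lemma \ref{lem:fin} (bounded alphabet $[n+m]$). Applying the bialgebra morphism $\phi$ to both sides and using $\phi(\sum_{P(w)=T} w)$ is, up to sign, a single $J_{\lambda(T)}$ — equivalently a signed $G_{\lambda(T)}$ — I get $G_\lambda G_\mu = \sum_\nu c_{\lambda,\mu}^\nu G_\nu$ as a finite sum, indexed by the shapes $\lambda(T)$ arising for $T\in T(T_1\shuffle T_2)$, with each coefficient a finite integer. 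The point is that since $\phi$ is an algebra morphism, the product on the symmetric-function side is the image of a product of finitely many honest words, so no infinite tail can appear.

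The main obstacle is the bookkeeping needed to make $\phi$ genuinely land in a space where finiteness is visible: one must know both that $\phi$ is well-defined as a map into $\hat\Lambda$ (respecting $K$-Knuth equivalence, so that $\phi([[h]])$ depends only on the class) and that $\phi([[h]])$ is actually a finite combination of $G_\mu$'s rather than merely an element of the completion. This is exactly the content of the results developed earlier in Section 5 — the identification $\phi([[h]]) = \sum J_\mu$ over the finitely many shapes $\mu$ of increasing tableaux in the class of $h$ (finite by Lemma \ref{lem:fin}), together with the sign-preserving change of basis between the $J_\mu$ and the $G_\mu$. Once those ingredients are in hand, the finiteness of $G_\lambda G_\mu = \sum_\nu c_{\lambda,\mu}^\nu G_\nu$ is an immediate consequence of applying the algebra morphism $\phi$ to the finite product identity of Theorem \ref{thm:ins}. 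I would also remark that this argument simultaneously gives the Littlewood-Richardson interpretation: the coefficient $c_{\lambda,\mu}^\nu$ (up to the stated sign) counts increasing tableaux $T$ in $T(T_1\shuffle T_2)$ of shape $\nu$, which is the statement of Theorem \ref{thm:LR}.
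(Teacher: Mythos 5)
Your argument is correct and is essentially the paper's own: the paper proves this either by combining Theorem \ref{thm:phi} with Corollary \ref{cor:prod}, or (as it explicitly notes) by reading finiteness off the machinery behind Theorem \ref{thm:LR}, which is exactly your route via URTs, Theorem \ref{thm:ins}, Theorem \ref{thm:Pp}/\ref{thm:phi(h)}, and Corollary \ref{cor:coincide}. The only quibble is the phrase ``a signed $G_{\lambda(T)}$'': $\phi(\sum_{P(w)=T}w)$ equals $J_{\lambda(T)}$ exactly, and the passage from $J$'s to $G$'s is a change of basis whose structure constants agree up to sign (Corollary \ref{cor:coincide}), not a scalar multiple --- but you invoke the right result, so the proof stands.
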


\begin{theorem} \cite[Corollary 6.7]{B} \label{thm:coprodfin}
 We have $$\Delta(G_{\nu}) = \sum_{\nu} d_{\lambda,\mu}^{\nu} G_{\lambda} \otimes G_{\mu},$$
where the sum on the right is over a finite set of pairs $\lambda, \mu$.
\end{theorem}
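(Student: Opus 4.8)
The plan is to obtain an explicit combinatorial expansion of $\Delta(G_\nu)$, reduce the statement to the assertion that each skew stable Grothendieck polynomial occurring in it lies in the span of finitely many straight-shape $G_\mu$, and then bound the shapes $\mu$ that can appear using the first-row/first-column information of Theorem~\ref{thm:heckeshape}.

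First I would split the variable alphabet. Writing the $y$-variables below the $z$-variables and restricting a set-valued tableau $T$ of shape $\nu$ with entries in $\{y_i\}\sqcup\{z_j\}$: the boxes of $T$ that contain at least one $y$-value form a straight shape $\lambda\subseteq\nu$, the $y$-entries give a set-valued tableau of shape $\lambda$, and the $z$-entries give a set-valued tableau of the skew shape $\nu/\lambda$ — the only delicate point being that a box on the inner boundary of $\nu/\lambda$ can carry both $y$'s and $z$'s, which must be allotted by the standard convention used to define $G_{\nu/\lambda}$. Collecting monomials with signs yields
$$\Delta(G_\nu)=\sum_{\lambda\subseteq\nu}G_\lambda\otimes G_{\nu/\lambda}.$$
Since $\lambda$ runs over sub-partitions of $\nu$, only finitely many terms occur, so it suffices to show that each $G_{\nu/\lambda}$ is a finite $\mathbb{Z}$-linear combination $\sum_\mu d^\nu_{\lambda,\mu}G_\mu$ of straight-shape polynomials; uniqueness of such an expansion has been recorded above, and substituting it into the display proves the theorem.

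Second I would bound the set of $\mu$ with $d^\nu_{\lambda,\mu}\ne 0$. Through $K$-theoretic rectification — equivalently, Hecke insertion of reading words — the expansion of $G_{\nu/\lambda}$ in the $G_\mu$-basis is controlled by increasing (set-valued) tableaux $R$ of skew shape $\nu/\lambda$, the shape attached to such an $R$ being the shape of its insertion tableau $P(\r(R))$. For any such $R$, Theorem~\ref{thm:heckeshape} gives that the first row of this shape has length $\i(\r(R))$ and its first column has length $\d(\r(R))$. But two entries of a strictly increasing subsequence of $\r(R)$ cannot lie in a common column of $\nu/\lambda$ (the one read first lies strictly below the other, contradicting strict increase down columns), so $\i(\r(R))\le\nu_1$; symmetrically two entries of a strictly decreasing subsequence cannot lie in a common row, so $\d(\r(R))\le\ell(\nu)$. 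Hence every relevant $\mu$ fits inside the fixed $\nu_1\times\ell(\nu)$ rectangle, of which there are only finitely many, and $G_{\nu/\lambda}$ is a finite combination as required.

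The substantive obstacle is the input to the second step: that $G_{\nu/\lambda}$, which a priori only lives in the completion $\hat\Lambda$, really is a $\mathbb{Z}$-combination of straight-shape $G_\mu$ whose coefficients (with the correct alternating signs) are governed by rectifications of skew tableaux — this is the $K$-theoretic Littlewood--Richardson content in skew form — together with the mixed-boundary-box bookkeeping of the first step; once these are established the shape bound makes finiteness immediate. This theorem is of course due to Buch~\cite{B}, and is the coproduct counterpart of Theorem~\ref{thm:prodfin}; within the present framework it also follows a posteriori, since the bialgebra morphism of Section~5 transports the coproduct of $KPR$, which is visibly finite on the combinatorial side, onto the span of the $G_\mu$.
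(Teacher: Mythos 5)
Your main argument has a genuine gap, and you flag it yourself: the second step requires knowing that $G_{\nu/\lambda}$ expands as a $\mathbb{Z}$-linear combination of straight-shape $G_\mu$ with coefficients governed by rectification of skew increasing tableaux. That is not an available prior result at this point --- it is essentially the skew $K$-theoretic Littlewood--Richardson rule, i.e.\ the content of Theorem~\ref{thm:skewLR}, which the paper only reaches after building the entire $KPR$ machinery ($\phi$, Theorem~\ref{thm:Pp}, Theorem~\ref{thm:phi(h)}, Theorems~\ref{thm:coins} and~\ref{thm:cohecktoins}). Without that input, the expansion of $G_{\nu/\lambda}$ in the completion $\hat\Lambda$ obtained by successively eliminating minimal terms gives no a priori reason to terminate, and your rectangle bound via Theorem~\ref{thm:heckeshape} (which is correct as far as it goes) has nothing to apply to. The first step also leaves the mixed-box bookkeeping unresolved: restricting a set-valued tableau of shape $\nu$ to the two alphabets produces a pair of shapes $\mu\subseteq\lambda\subseteq\nu$ (boxes with only $y$'s, boxes with some $y$), so the naive identity $\Delta(G_\nu)=\sum_{\lambda\subseteq\nu}G_\lambda\otimes G_{\nu/\lambda}$ needs a real argument or a sign-corrected double sum. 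So as written the proposal is circular: it assumes a statement at least as strong as the one to be proved.

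The paper's own proof is the remark you relegate to your final sentence: combine Theorem~\ref{thm:phi} with Corollary~\ref{cor:coprod}. Concretely, every shape $\nu$ admits a URT $T_0$ (e.g.\ the minimal or superstandard tableau), so $J_\nu=\phi([[\r(T_0)]])$ by Theorem~\ref{thm:phi(h)}; since $\Delta([[\r(T_0)]])$ is a finite sum of terms $[[h']]\otimes[[h'']]$ and $\phi$ is a bialgebra morphism, $\Delta(J_\nu)$ is a finite sum of $J_\lambda\otimes J_\mu$, and Corollary~\ref{cor:coincide} together with the uniqueness of expansions in $\hat{\Lambda\otimes\Lambda}$ transfers this to the $G$'s. (The paper's alternative argument reads the finiteness off Theorem~\ref{thm:skewLR} plus Lemma~\ref{lem:fin}: the tableaux $R$ of shape $\lambda\oplus\mu$ use only the alphabet of $T_0$.) If you want your route to stand on its own, you must first prove the skew expansion with rectification coefficients; otherwise develop the $\phi$-based argument, which is where the finiteness actually comes for free.
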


\subsection{Weak set-valued tableax}
A \textit{weak set-valued tableau} $T$ of shape $\lambda$ is a filling of the boxes
with finite, non-empty multisets of positive integers so that
\begin{enumerate}
 \item the smallest number in each box is greater than or equal to the largest
number in the box directly to the left of it (if that box is present), and
\item the smallest number in each box is strictly greater than the largest number 
in the box directly above it (if that box is present).
\end{enumerate}

Note that the numbers in each box are not necessarily distinct.

For a weak set-valued tableau $T$, define $x^T$ to be $\prod_{i\geq 1} x_i^{a_i}$, where $a_i$ is
the number of occurences of the letter $i$ in $T$.

\begin{example} The following weak set-valued tableau $T$ has $x^T=x_1^3x_2^4x_3^2x_4^2x_5x_6x_8$.
\begin{center}
 \begin{ytableau}
  11 & 12 & 2 & 346 \\
223 & 45 & 8
 \end{ytableau}
\end{center}
\end{example}

Let $J_\lambda=\sum_T x^T$ denote the weight generating function of all weak set-valued tableux $T$ of
shape $\lambda$. We will call $J_\lambda$ the \textit{weak stable Grothendieck polynomial} indexed by $\lambda$.

\begin{example}
 We have that $$J_{(2,1)}=x_1^2x_2 + 2x_1x_2x_3 + 2x_1^3x_2 + 3x_1^2x_2^2+2x_1x_2^3+8x_1x_2x_3x_4+\ldots,$$
where, for example, the coefficient of $x_1^2x_2^2$ is $3$ because of the following weak set-valued tableaux.
\begin{center}
 \begin{ytableau}
  11 & 2 \\
2
 \end{ytableau}\hspace{.5in}
 \begin{ytableau}
  1 & 12 \\
2
 \end{ytableau}\hspace{.5in}
 \begin{ytableau}
  1 & 1 \\
22
 \end{ytableau}
\end{center}

\end{example}

\begin{remark}
 In \cite{LP}, weak stable Grothendieck polynomials $J_{\lambda}$ were introduced when studying the effect of standard ring automorphism 
$\omega$ on the stable Grothendieck polynomials $G_{\lambda}$. In particular, it was shown in \cite[Theorem 9.21]{LP} that $J_{\lambda}$ are symmetric functions. Note that 
our current convention for labeling $J_{\lambda}$ differs from that in \cite{LP} by shape transposition.
\end{remark}

\begin{theorem} 
 We have $$J_{\lambda}(x_1, x_2, \ldots) = (-1)^{|\lambda|}G_{\lambda} \left(\frac{-x_1}{1-x_1}, \frac{-x_2}{1-x_2}, \ldots \right).$$
\end{theorem}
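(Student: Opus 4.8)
The plan is to prove the identity by a term-by-term comparison after substituting $x_i \mapsto \frac{-x_i}{1-x_i}$ into Buch's tableau formula for $G_\lambda$, and recognizing the result as the generating function for weak set-valued tableaux. First I would recall that
$$G_\lambda\!\left(\frac{-x_1}{1-x_1}, \frac{-x_2}{1-x_2}, \ldots\right) = \sum_S (-1)^{|S|-|\lambda|} \prod_{i\geq 1}\left(\frac{-x_i}{1-x_i}\right)^{b_i(S)},$$
where the sum is over set-valued tableaux $S$ of shape $\lambda$ and $b_i(S)$ is the number of occurrences of $i$ in $S$. Since $|S| = \sum_i b_i(S)$, the sign $(-1)^{|S|}$ cancels against the $(-1)^{|S|}$ coming from the numerators $(-x_i)^{b_i(S)}$, leaving $\prod_i x_i^{b_i(S)}/(1-x_i)^{b_i(S)}$; multiplying by the overall $(-1)^{|\lambda|}$ exactly matches the $(-1)^{|\lambda|}$ on the right side of the claimed identity against the $(-1)^{-|\lambda|}$ still sitting in front. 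So it suffices to show
$$\sum_S \prod_{i\geq 1}\frac{x_i^{b_i(S)}}{(1-x_i)^{b_i(S)}} = \sum_T \prod_{i\geq 1} x_i^{a_i(T)},$$
the right-hand sum being over weak set-valued tableaux $T$ of shape $\lambda$.

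Next I would expand each geometric-type factor: for a box of $S$ containing a given letter $i$, the factor $\frac{x_i}{1-x_i} = \sum_{m\geq 1} x_i^m$ can be read as a choice of a positive multiplicity $m\geq 1$ for that occurrence of $i$. I would then define a bijection $S \mapsto T$ (with the multiplicity data) as follows: given $S$ and, for each box of $S$ and each letter in that box, a chosen multiplicity $\geq 1$, form $T$ by replacing each letter $i$ appearing in a box of $S$ by $m$ copies of $i$ in the same box, where $m$ is the chosen multiplicity. The key point is that the inverse operation is well-defined: from a weak set-valued tableau $T$, inside each box collapse every maximal run of equal letters to a single letter, recording the run length as its multiplicity; this recovers a set-valued tableau $S$ of the same shape (the strict inequalities across box boundaries in $T$ are exactly the conditions that survive the collapse, giving the set-valued conditions on $S$) together with the multiplicity data. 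Checking that the defining inequalities of a weak set-valued tableau correspond precisely, under this collapse, to the defining inequalities of a set-valued tableau together with the freedom to repeat letters within a box is the combinatorial heart of the argument, but it is essentially immediate from comparing the two definitions in Section~\ref{sec:rec} and Section~5.2: in both, the constraint is only \emph{between} adjacent boxes, never \emph{within} a box.

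Tracking the weights through this bijection: a chosen multiplicity $m$ for an occurrence of $i$ in $S$ contributes $x_i^m$, and the corresponding $m$ copies of $i$ in $T$ contribute $x_i^m$ as well, so $\prod_i x_i^{a_i(T)} = \prod_i \prod_{\text{occurrences of }i\text{ in }S} x_i^{m}$, matching the expanded left-hand side exactly. Summing over all $S$ and all multiplicity assignments on one side and over all weak set-valued $T$ on the other completes the identity. A minor technical point to address is convergence/formal well-definedness: the manipulation should be carried out at the level of formal power series (or with the substitution understood coefficient-wise, as is standard for stable Grothendieck polynomials in $\hat\Lambda$), so I would note that for any fixed monomial only finitely many $S$ with multiplicity data contribute. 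The main obstacle, such as it is, is purely bookkeeping: making the collapse bijection and its inverse precise and verifying it respects the row/column conditions; there is no deep difficulty, since the two families of tableaux differ only in whether repeated entries are allowed inside a single box, which is exactly what the substitution $x_i \mapsto \frac{-x_i}{1-x_i}$ encodes.
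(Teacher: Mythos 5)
Your proposal is correct and follows essentially the same route as the paper: substitute into Buch's set-valued tableau formula, cancel the signs using $|S|=\sum_i b_i(S)$, and interpret the expansion of $\prod_i\bigl(x_i/(1-x_i)\bigr)^{b_i(S)}$ as the generating function for the family of weak set-valued tableaux obtained from $S$ by assigning positive multiplicities to each entry. The paper's proof is exactly this multiplicity/collapse correspondence, so no further comparison is needed.
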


\begin{proof}
 There is a correspondence between set-valued tableaux and weak set-valued tableaux as follows. For each set-valued tableau $T$, we can obtain a family of weak set-valued tableaux 
of the same shape, call the family $\mathcal{T}$,
by saying that $W\in\mathcal{T}$ if and only if $W$ can be constructed from $T$ by turning subsets in boxes of $T$ into multisets. 
Conversely, given any weak set-valued tableau, we can find the set-valued tableau it corresponds to by transforming its multisets into subsets containing the same positive integers. 
For example, if we have the $T$ shown below, then $W_1$ and $W_2$ are in $\mathcal{T}$.
\begin{center}
$T=$
\begin{ytableau}
 13 & 4 & 57 \\
4 & 6 
\end{ytableau}\hspace{.5in}
$W_1=$
\begin{ytableau}
 133 & 4 & 57 \\
4 & 666 
\end{ytableau}\hspace{.5in}
$W_2=$
\begin{ytableau}
 13 & 4 & 557 \\
44 & 66 
\end{ytableau}
\end{center}
Thus if $x^T=x_1^{a_1}x_2^{a_2}x_3^{a_3}\ldots$, we have 
$$\displaystyle\sum_{W\in \mathcal{T}} x^W= \left(\frac{x_1}{1-x_1}\right)^{a_1}\left(\frac{x_2}{1-x_2}\right)^{a_2}\left(\frac{x_3}{1-x_3}\right)^{a_3}\ldots$$
since we can choose to repeat any of the $a_i$ $i$'s any positive number of times.

Therefore
$$(-1)^{|\lambda|}G_\lambda\left(\frac{-x_1}{1-x_1}, \frac{-x_2}{1-x_2}, \ldots \right)=(-1)^{|\lambda|}\displaystyle\sum_T  (-1)^{|T|-|\lambda|}\left(\frac{-x}{1-x}\right)^T = 
\displaystyle\sum_T \left(\frac{x}{1-x}\right)^T=J_\lambda(x_1,x_2,\ldots),$$
where the sum is over set-valued tableaux $T$

\end{proof}

\begin{corollary}\label{cor:coincide}
 The structure constants of the rings with bases $G_{\lambda}$ and $J_{\lambda}$ coincide up to sign. In other words, 
$$J_{\lambda} J_{\mu}  = \sum_{\nu} c_{\lambda, \mu}^{\nu} J_{\nu}$$ if and only if
$$G_{\lambda} G_{\mu}  = \sum_{\nu} (-1)^{|\nu|-|\lambda|-|\mu|} c_{\lambda, \mu}^{\nu} G_{\nu}.$$ 
\end{corollary}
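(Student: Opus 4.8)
The plan is to deduce the corollary directly from the preceding theorem, which expresses $J_\lambda$ as a substitution into $G_\lambda$, namely
$$J_{\lambda}(x_1,x_2,\ldots) = (-1)^{|\lambda|} G_{\lambda}\!\left(\frac{-x_1}{1-x_1}, \frac{-x_2}{1-x_2},\ldots\right).$$
The key observation is that the map $\sigma$ sending a symmetric function $f(x_1,x_2,\ldots)$ to $f\!\left(\frac{-x_1}{1-x_1},\ldots\right)$ is a ring homomorphism on $\hat\Lambda$ (it is just a substitution of power series into the variables, and it preserves the degree-wise finiteness needed to stay in $\hat\Lambda$, since each variable is replaced by a power series with zero constant term). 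Therefore, if $G_{\lambda}G_{\mu} = \sum_{\nu} a_{\lambda,\mu}^{\nu} G_{\nu}$ with $a_{\lambda,\mu}^{\nu} = (-1)^{|\nu|-|\lambda|-|\mu|} c_{\lambda,\mu}^{\nu}$, applying $\sigma$ and multiplying by the appropriate sign turns this identity into the corresponding identity among the $J$'s.

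First I would record that $\sigma$ is multiplicative: for $f,g \in \hat\Lambda$, $\sigma(fg) = \sigma(f)\sigma(g)$, since substitution of the same power series into both factors commutes with multiplication, and the result lands in $\hat\Lambda$ by the degree-counting argument above. Next I would apply $\sigma$ to the hypothesized expansion $G_{\lambda}G_{\mu} = \sum_{\nu} (-1)^{|\nu|-|\lambda|-|\mu|} c_{\lambda,\mu}^{\nu} G_{\nu}$, obtaining
$$\sigma(G_{\lambda})\,\sigma(G_{\mu}) = \sum_{\nu} (-1)^{|\nu|-|\lambda|-|\mu|} c_{\lambda,\mu}^{\nu}\, \sigma(G_{\nu}).$$
Then I would substitute $\sigma(G_{\rho}) = (-1)^{|\rho|} J_{\rho}$ from the previous theorem into each term. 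The left side becomes $(-1)^{|\lambda|+|\mu|} J_{\lambda} J_{\mu}$, while the $\nu$-th term on the right becomes $(-1)^{|\nu|-|\lambda|-|\mu|} c_{\lambda,\mu}^{\nu} (-1)^{|\nu|} J_{\nu} = (-1)^{-|\lambda|-|\mu|} c_{\lambda,\mu}^{\nu} J_{\nu}$ (using $(-1)^{2|\nu|}=1$). Dividing both sides by $(-1)^{|\lambda|+|\mu|}$ yields $J_{\lambda} J_{\mu} = \sum_{\nu} c_{\lambda,\mu}^{\nu} J_{\nu}$, which is the "only if" direction. The "if" direction is identical, running the signs in reverse, or one can note that $\sigma$ is invertible on $\hat\Lambda$ (its inverse is substitution by $\frac{x_i}{1+x_i}$, up to the analogous sign), so the two expansions are genuinely equivalent rather than merely one implying the other.

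The only mildly delicate point — and the step I would flag as the place to be careful — is confirming that $\sigma$ is well-defined as a map $\hat\Lambda \to \hat\Lambda$ and is multiplicative there, rather than just on the polynomial ring $\Lambda$. This is handled by the same principle used already in the paper for $\hat\Lambda$: each monomial $m_\rho$ of a fixed degree $n$ is, after the substitution, an infinite series all of whose monomials have degree $\geq n$, so collecting the degree-$n$ part of $\sigma(f)$ only involves finitely many of the $m_\rho$ appearing in $f$; multiplicativity then follows degree by degree from multiplicativity of the ordinary substitution on finite truncations. Since the corollary only asserts the equivalence of two \emph{finite} expansions (the finiteness being guaranteed independently by Theorem \ref{thm:prodfin}), no convergence subtleties beyond this bookkeeping arise.
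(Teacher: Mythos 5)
Your proposal is correct and follows essentially the same route as the paper: the paper's (one-line) proof likewise deduces the equivalence from the substitution identity $J_{\lambda}=(-1)^{|\lambda|}G_{\lambda}\bigl(\tfrac{-x_1}{1-x_1},\ldots\bigr)$ and its inverse, exactly the homomorphism $\sigma$ you use. Your write-up just makes explicit the sign bookkeeping and the well-definedness of $\sigma$ on $\hat\Lambda$, which the paper leaves implicit.
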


\begin{proof}
 In one direction it is clear, in the other follows from 
$$G_{\lambda}(x_1, x_2, \ldots) = (-1)^{|\nu|-|\lambda|-|\mu|}J_{\lambda} \left(\frac{-x_1}{1-x_1}, \frac{-x_2}{1-x_2}, \ldots \right).$$
\end{proof}

 \subsection{Fundamental quasisymmetric functions}
A composition of $n$ is an ordered arrangement of positive integers which sum to $n$. For example, $(3)$, $(1,2)$, $(2,1)$, and $(1,1,1)$ are all of the compositions of $3$.
To a composition $\alpha$ of $n$, we associate $S_\alpha \subset [n-1]$ by letting $$S_\alpha=\{\alpha_1, \alpha_1+\alpha_2, \ldots , \alpha_1+\alpha_2+\ldots +\alpha_{k-1}\}.$$
Conversely, if $S=\{s_1,s_2,\ldots ,s_k\}$ is a subset of $[n-1]$, we associate a composition, $\mathcal{C}(S)$, to $S$ by $$\mathcal{C}(S)=\{s_1, s_2-s_1,s_3-s_2,\ldots ,n-s_k\}.$$
For example, if $S=(1,4,5)\subset[6-1]$, $\mathcal{C}(S)=(1, 4-1, 5-4, 6-5)=(1,3,1,1)$. Conversely, given composition $\alpha=(1,3,1,1)$, $S_\alpha=(1, 1+3,1+3+1)=(1,4,5)$. 

We define the \textit{descent set} of a word $h=h_1h_2\ldots h_l$ to be the set $\mathcal{D}(H)=\{i|h_i>h_{i+1}\}$. Then, using the definitions above, given that $\mathcal{D}(h)=\{\alpha_1,\alpha_2,\ldots,\alpha_m\}$,
we have the associated composition $$\mathcal{C}(h)=\mathcal{C}(\mathcal{D}(h))=(\alpha_1,\alpha_2-\alpha_1, \alpha_3-\alpha_2,\ldots, \alpha_m-\alpha_{m-1}, l-\alpha_m).$$ We call $\mathcal{C}(h)$ the \textit{descent composition}
for $h$.

\begin{example}
 If $h=11423532$, the descent set of $h$ is $\{3,6,7\}$ and $\mathcal{C}(h)=(3,3,1,1)$.
\end{example} 

We shall now define the fundamental quasisymmetric function, $L_\alpha$. Given $\alpha$, a composition of $n$, define 

$$L_\alpha=\displaystyle\sum _{\substack{i_1\leq\ldots\leq i_n \\ i_j<i_{j+1}\text{ if }j\in S_\alpha}}x_{i_1}x_{i_2}\cdots x_{i_n}.$$

For more information on the ring of quasisymmetric functions and on fundamental quasisymmetric functions see \cite{EC2}.

\begin{example}
The fundamental quasisymmetric function indexed by the composition $(1,3)$ is 
$$L_{(1,3)}=x_1x_2^3+x_1x_3^3+x_2x_3^3+x_1x_2^2x_3+x_1x_2x_3^2+x_1x_2x_3x_4+\ldots,$$
an infinite sum where all terms have degree $4$. Note that every term must have $i_1<i_2$ since $S_{(1,3)}=\{2\}$, so $x_1^2x_2^2$ will never appear in $L_{(1,3)}$.
\end{example}


Given a weak set-valued tableau $T$ filled with elements of $[n]$, each appearing once, we say that there is a descent at entry $i$ if $i+1$ is strictly below $i$.
We may then find the descent set of $T$ and determine the composition corresponding to its descent set, $\mathcal{C}(T)$, by listing the entries in increasing order and marking
the entries at which there was a descent in the tableau.

\begin{example}\label{ex:tabdescent}
 The descent set of $T$ shown below is $\{3,5,9\}$ and the corresponding composition is $\mathcal{C}(T)=(3,2,4,2)$.
 \begin{center}
\begin{ytableau}
  123 & 5 & 89 & 11 \\
46 & 7 & 10
 \end{ytableau}
\end{center}
\end{example}

Given any weak set-valued tableau $T$, we determine $\mathcal{C}(T)$ by first standardizing tableau $T$. To standardize $T$, first find all $c_1$ occurences of $1$ in $T$ and 
replace them from southwest to northeast with $1,2,\ldots,c_1$. Next, replace the $c_2$ $2$'s from southwest to northeast with $c_1+1,c_1+2,\ldots,c_1+c_2$. Continue this process, replacing
the $c_i$ i's from southwest to northeast with the next available consecutive integer. The resulting tableau is the standardization of $T$. We may then find the 
descent set of the standardization and let $\mathcal{C}(T)$ be the associated composition.


\begin{example}
The standardization of the weak set-valued tableau shown below is the tableau $T$ of Example \ref{ex:tabdescent}. Thus $\mathcal{C}(T')=\mathcal{C}(T)=(3,2,4,2)$.
\begin{center}
 $T'=$
\begin{ytableau}
  122 & 3 & 56 & 8 \\
34 & 4 & 7
 \end{ytableau}
\end{center}
\end{example}

Recall from Section \ref{sec:rec} that $Q(h)$ denotes the recording tableau of Hecke insertion.

\begin{theorem}\label{cor:descent}
 Let $h$ be a word and $Q(h)$ be its recording tableau.
 Then $\mathcal{C}(h)=\mathcal{C}(Q(h))$.
\end{theorem}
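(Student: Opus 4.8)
The plan is to show that Hecke insertion records descents correctly, i.e.\ that the descent composition of a word $h$ equals the descent composition of its recording tableau $Q(h)$. The natural approach is induction on the length of $h$. The key observation to isolate is the following: if we insert $h_{k+1}$ into $P(h_1 \cdots h_k)$ and place the label $k+1$ in the special corner $c$, then the cell containing $k$ (which was placed at the previous step) is strictly above the cell containing $k+1$ if and only if $h_k > h_{k+1}$. Granting this cell-position statement, a descent of $h$ at position $k$ corresponds precisely to a descent of $Q(h)$ at entry $k$ (recall that $Q(h)$ is a set-valued tableau with entries $1, \ldots, n$ each appearing once, so ``descent at $k$'' means the cell of $k+1$ is strictly below the cell of $k$), and the two descent sets coincide, hence so do the two descent compositions.

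The main step is therefore to establish the cell-position statement, and here I would lean on Lemma~\ref{lem:below}. That lemma says: if $x_1$ is inserted into $Y$ producing corner $c_1$, and then $x_2$ is inserted into the result producing corner $c_2$, then $c_2$ is strictly below $c_1$ iff $x_1 > x_2$. Applying this with $Y = P(h_1 \cdots h_{k-1})$, $x_1 = h_k$, $x_2 = h_{k+1}$ gives exactly that the corner $c_{k+1}$ is strictly below the corner $c_k$ iff $h_k > h_{k+1}$. Since by construction the entry $k$ of $Q(h)$ sits in the cell $c_k$ and the entry $k+1$ sits in $c_{k+1}$, this is precisely the needed correspondence between descents of $h$ and descents of $Q(h)$.

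One subtlety to handle carefully: Lemma~\ref{lem:below} is stated for Hecke insertion of integers into a fixed tableau, tracking only the corner cell (it does not involve the $\alpha$ parameter, which in the set-valued recording tableau only governs whether a new cell is created versus an existing cell being re-labeled). The descent test only cares about which cell receives the label, not whether that cell is new, so this does not cause a problem; one should simply note that ``the cell of entry $k$ in $Q(h)$'' is well-defined and equals $c_k$ regardless of whether $c_k$ was a newly added box or an already-present corner. I should also address that $Q(h)$ is genuinely a set-valued tableau: an entry $k+1$ could in principle be placed in the same cell as $k$. But if $c_{k+1} = c_k$ then $c_{k+1}$ is not strictly below $c_k$, so by Lemma~\ref{lem:below} we are in the non-descent case $h_k \le h_{k+1}$, which is consistent; placing $k+1$ in the same cell as $k$ contributes no descent, exactly as $h_k \le h_{k+1}$ contributes no descent to $h$.

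The hard part, if any, is purely bookkeeping: making sure the definition of descent for a set-valued (as opposed to ordinary) tableau is applied correctly when consecutive labels $k, k+1$ can lie in the same box, and confirming that Lemma~\ref{lem:below}'s ``strictly below'' dichotomy (below versus not-below, the latter including equal and incomparable positions) lines up exactly with the descent/non-descent dichotomy for $h$. Once that is pinned down, the induction is immediate: the base case $|h| = 1$ is trivial (empty descent set on both sides), and the inductive step adds the single letter $h_{n}$, whose relation to $h_{n-1}$ determines the single new potential descent at position $n-1$, matched on the tableau side by the position of entry $n$ relative to entry $n-1$ via the lemma.
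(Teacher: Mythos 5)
Your proof is correct and follows essentially the same route as the paper: the paper's entire argument is the single observation that, by Lemma~\ref{lem:below}, position $i$ is a descent of $h$ if and only if entry $i+1$ lies strictly below entry $i$ in $Q(h)$, which is exactly your main step. Your additional bookkeeping about the set-valued case (labels $k$ and $k+1$ sharing a cell counting as a non-descent, matching $h_k \le h_{k+1}$) is a sound elaboration of a point the paper leaves implicit.
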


\begin{proof}
According to Lemma \ref{lem:below}, there is a descent at position $i$ of word $h$ if and only if the entry $i+1$ is strictly
below entry $i$ in $Q(h)$.
\end{proof}

\begin{example}
 Consider $h=13324535$ with $P(h)$ and $Q(h)$ shown below.
\begin{center}
$P(h)=$
 \begin{ytableau}
  1 & 2 & 3 & 5 \\
3 & 4 
 \end{ytableau}\hspace{1in}
$Q(h)=$
\begin{ytableau}
 1 & 23 & 5 & 68 \\
4 & 7
\end{ytableau}
\end{center}
One easily checks that $\mathcal{C}(h)=(3,3,2)=\mathcal{C}(Q(h))$.
\end{example}

\subsection{Decomposition into fundamental quasisymmetric functions}

\begin{theorem} \label{thm:Pp}
For any fixed increasing tableau $T$ of shape $\lambda$ we have  
$$J_\lambda=\displaystyle\sum_{P(h)=T} L_{\mathcal{C}(h)}.$$
\end{theorem}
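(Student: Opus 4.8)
The plan is to establish the identity $J_\lambda = \sum_{P(h)=T} L_{\mathcal{C}(h)}$ by regrouping the tableau-sum definition of $J_\lambda$ according to Hecke insertion. First I would recall that by definition $J_\lambda = \sum_W x^W$, summed over all weak set-valued tableaux $W$ of shape $\lambda$. The key observation is that a weak set-valued tableau $W$ of shape $\lambda$ can be encoded as a pair $(T', Q)$, where $T' = P(\r(W))$ records... — but actually the cleaner route is to use the bijection of Theorem~\ref{thm:bij} between words and pairs $(P(h), Q(h))$ with $P$ increasing and $Q$ set-valued of the same shape. I would first show that summing $x^W$ over weak set-valued tableaux of shape $\lambda$ is the same as summing, over all increasing tableaux $P$ of shape $\lambda$ and all weak set-valued recording tableaux, an appropriate monomial; then fix the insertion tableau to be our chosen $T$ and argue the contribution is independent of which $T$ we chose.

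The technical heart is a \emph{content-preserving} correspondence. Given a weak set-valued tableau $W$ of shape $\lambda$, read its entries with multiplicity in a canonical (e.g. row-reading, left-to-right, bottom-to-top, smallest-to-largest within a box) order to get a word $h$; then $P(h)$ is some increasing tableau of shape $\lambda$ and $Q(h)$ is a set-valued recording tableau whose standardization has descent composition $\mathcal{C}(h)$. Conversely, Theorem~\ref{thm:bij} lets us reconstruct $h$ from $(P(h),Q(h))$. The monomial $x^W$ depends only on how many times each letter $i$ appears, i.e. on the content of $h$. So I would organize the sum as
$$J_\lambda = \sum_{\text{increasing } T' \text{ of shape } \lambda} \ \sum_{h : P(h) = T'} x^h,$$
where $x^h$ is the content monomial of $h$. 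Now I need two things: (a) that $\sum_{h:P(h)=T'} x^h$ is the same for every increasing $T'$ of shape $\lambda$ — this should follow because the set of possible recording data (weak set-valued tableaux of shape $\lambda$, equivalently descent statistics realized) depends only on the shape $\lambda$, not on $T'$; and (b) that $\sum_{h:P(h)=T'} x^h = \sum_{h:P(h)=T'} L_{\mathcal{C}(h)}$ when we sum over a full set of words realizing each $Q$-tableau. For (b), the point is that the words $h$ with a fixed $P(h)=T$ and fixed standardized recording tableau are exactly the words with a prescribed descent composition and prescribed underlying standardization, and $\sum_{h} x^h$ over all words with descent composition $\alpha$ and a fixed standardization equals $L_\alpha$ by the very definition of the fundamental quasisymmetric function. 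Using Theorem~\ref{cor:descent}, $\mathcal{C}(h) = \mathcal{C}(Q(h))$, so grouping words $h$ with $P(h)=T$ by their recording tableau $Q(h)$ and then unstandardizing gives exactly $\sum_{P(h)=T} L_{\mathcal{C}(h)}$, where now the outer sum ranges over all words (the $L_{\mathcal{C}(h)}$ for $h$ differing only by which letters are used coincide, so one must be careful about what indexes the sum — really it is indexed by pairs (T fixed, standardized recording tableau), equivalently by standard words inserting to $\overline T = T$).

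I expect the main obstacle to be point (a): showing that $\sum_{h:P(h)=T'} x^h$ is genuinely independent of the choice of increasing tableau $T'$ of shape $\lambda$. Equivalently, one must show that for any two increasing tableaux $T_1, T_2$ of the same shape, the multisets of recording tableaux $\{Q(h) : P(h) = T_1\}$ and $\{Q(h) : P(h) = T_2\}$ coincide. This is where Theorem~\ref{thm:bij} is indispensable: it says $(P,Q) \mapsto h$ is a bijection onto \emph{all} pairs of the same shape, so for a fixed shape $\lambda$ the set of $Q$'s that can occur alongside a given $P$ is precisely the set of all set-valued tableaux of shape $\lambda$ — independent of $P$. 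Hence $\sum_{h:P(h)=T'} x^h = \sum_{Q \text{ of shape }\lambda} x^{Q}$ (with the convention that $x^Q$ means the content monomial of the word that $(T',Q)$ reconstructs, which by the bijection has content determined appropriately), and this manifestly does not depend on $T'$. Once (a) is in hand, specializing $T' = T$ and running the descent-composition bookkeeping of the previous paragraph finishes the proof. I would close by remarking that, combined with Lemma~\ref{lem:insrowword} (so that $T$ ranges over all increasing tableaux of shape $\lambda$ and each is hit), this also re-expresses $J_\lambda$ purely in terms of $K$-Knuth / Hecke data, which is the form needed for the quasisymmetric expansion of $\phi([[h]])$ later.
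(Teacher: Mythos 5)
Your high-level instinct --- that Theorem \ref{thm:bij} is the engine, because for a fixed increasing $P$ of shape $\lambda$ the recording tableaux $Q(h)$ range over \emph{all} set-valued tableaux of shape $\lambda$ --- is the right one, and it is exactly what the paper's argument rests on. But the weight bookkeeping in your proposal is attached to the wrong object, and this is a genuine error rather than a presentational one. You identify $x^W$ with the content monomial $x^h$ of a word $h$ read off from $W$, and write $J_\lambda=\sum_{T'}\sum_{h:P(h)=T'}x^h$. This identity is false: the monomial $x_1^2$ occurs in $J_{(2)}$ (from the weak set-valued tableau with boxes $\{1\},\{1\}$), but the only word of content $x_1^2$ is $h=11$, whose insertion tableau has shape $(1)$, so the right-hand side misses it. More structurally, any word $h$ with $P(h)=T$ uses exactly the fixed finite alphabet of $T$, so $\sum_{h:P(h)=T}x^h$ is a polynomial in finitely many variables and cannot equal $J_\lambda$. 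In the correct correspondence the weight of $W$ is carried not by the letters of $h$ but by the auxiliary data on the recording side: $W$ corresponds to a pair $(h,\sigma')$ where $h$ is recovered by \emph{reverse} Hecke insertion from the pair $(T,\ \text{standardization of }W)$ --- the fixed $T$ enters here, with $W$ playing the role of $Q$, not of $P$ --- and $\sigma'=(i_1\leq\cdots\leq i_l)$ is the sorted multiset of entries of $W$, subject to $i_j<i_{j+1}$ whenever $j\in\mathcal{D}(h)$ (this compatibility is Theorem \ref{cor:descent}, via Lemma \ref{lem:below}). Then $x^W=\prod_j x_{i_j}$, and summing over admissible $\sigma'$ for fixed $h$ gives $L_{\mathcal{C}(h)}$ directly from the definition of the fundamental quasisymmetric function.

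A second, related problem is that your outer sum over all increasing $T'$ of shape $\lambda$ contradicts the statement being proved, which fixes a single $T$. Once the bijection above is set up there is nothing to average over and no ``independence of $T'$'' step to carry out; that independence is a corollary of the theorem (both sides equal $J_\lambda$ for every choice of $T$ of shape $\lambda$), not an ingredient of its proof. Your item (b), asserting that all words with a prescribed descent composition and fixed standardization sum to $L_\alpha$, is the classical $P$-partition fact about unstandardizing a \emph{word}; here the unstandardization happens on the recording tableau (turning a set-valued $Q$ into a weak set-valued $W$ by repeating entries), which is precisely why Lemma \ref{lem:below} is needed to check that the repeated labels still produce a valid weak set-valued tableau.
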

\begin{proof}
We give an explicit weight-preserving bijection between the set of weak set-valued tableaux of shape $\lambda$ and the 
set of pairs $(h,\sigma')$ where $h=h_1h_2\ldots h_l$ is a word with $P(h)=T$ and $\sigma'$ is a sequence of positive integers $(i_1\leq i_2\leq\ldots\leq i_l)$, where
$i_j<i_{j+1}$ if $j\in \mathcal{D}(h)$.  

Suppose we have a weak set-valued tableau $W$ of shape $\lambda$. To obtain $h$, first standardize $W$. Next,
using $T$ as $P(h)$ and the standardization of $W$ as $Q(h)$, perform reverse Hecke insertion. 
Let the entries of $W$ in increasing order be $i_1,i_2,i_3,\ldots, i_l$, where each $i_j$ is a positive integer, and 
denote $\sigma'=(i_1,i_2,\ldots,i_l)$. We then have $i_j\leq i_{j+1}$ for all $j\in \{1,2,\ldots,l-1\}$, and $i_j<i_{j+1}$ if $j\in\mathcal{D}(h)$
by Theorem \ref{cor:descent}.

For the reverse map, suppose we have $h=h_1h_2\ldots h_l$ with $P(h)=T$ and some $$\sigma'=(i_1\leq i_2\leq \ldots\leq i_l).$$ Then let $W$ be the recording tableau of the insertion of $h$, which uses
 the positive integers of $\sigma'$, i.e. $i_j$ is used to label the special corner $c$ of the insertion 
$P(h_1h_2\ldots h_{j-1}){\overset{H}{\longleftarrow}}h_j$. According to Lemma \ref{lem:below}, the result is a valid weak set-valued tableau. Using Theorem \ref{thm:bij} we 
conclude we indeed have a bijection. 

It remains to note that for a fixed $h$, we have  $$\sum_{\sigma'=(i_1\leq i_2\leq \ldots\leq i_l)} \prod_{j=1}^l x_{i_j} =  L_{\mathcal{C}(h)},$$
where the sum is over $\sigma'$ such that $i_j<i_{j+1}$ if $j\in \mathcal{D}(h)$.
\end{proof}

\begin{example}
 Suppose we start with increasing tableau $T$ and the weak set-valued tableau $W$ shown below.
\begin{center}
$T=$
\begin{ytableau}
 1 & 2 & 3 \\
4 & 5
\end{ytableau}\hspace{1in}
$W=$
 \begin{ytableau}
  122 & 235 & 58 \\
45 & 677
 \end{ytableau}
\end{center}
Performing reverse Hecke insertion with the standardization of $W$ recording the special box $c$ at each step, we obtain $$h=1114412252233.$$
The corresponding composition is $(5,4,4)$, and $$\sigma'=(1,2,2,2,3,4,5,5,5,6,7,7,8).$$

To understand the inverse map, simply let $T=P(h)$ and record the special box $c$ at each step with the postitive integers
of $\sigma'$ to obtain the corresponding weak set-valued tableau.
\end{example}

\begin{remark}
 Pairs $(h,\sigma')$ as above are an analogue of {\it {biwords}} of \cite{LTh}.
\end{remark}

\begin{remark}
 The decomposition of weak stable Grothendieck polynomials $J_{\lambda}$ into fundamental quasisymmetric functions is similar to Stanley's theory of $P$-partitions, see \cite{EC2}.
A different $K$-theoretic analog of such decomposition appears in \cite{LP}.
\end{remark}

\subsection{Map from the $KPR$ to symmetric functions}

Consider the map $\phi: KPR \longrightarrow Sym$ given by 
$$[[h]] \mapsto \sum_{w \equiv h} L_{\mathcal{C}(w)}.$$
\begin{theorem} \label{thm:phi}
 Map $\phi$ is a bialgebra morphism. 
\end{theorem}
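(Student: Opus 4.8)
The plan is to verify separately that $\phi$ respects the product and the coproduct, reducing each claim to the corresponding identity for individual initial words and then invoking the known bialgebra structures on $\QSym$ and on the $K$-Knuth monoid. The main tool is Theorem~\ref{thm:Pp}, which lets us pass freely between sums of $L_{\mathcal{C}(w)}$ over an insertion class and a weak stable Grothendieck polynomial $J_{\lambda}$; combined with Lemma~\ref{lem:hecktoins}, which splits $[[h]]$ into insertion classes, this gives
\[
\phi([[h]]) = \sum_{T \equiv P(h)} \sum_{P(w)=T} L_{\mathcal{C}(w)} = \sum_{T \equiv P(h)} J_{\lambda(T)},
\]
so that $\phi([[h]])$ is, on the nose, a sum of weak stable Grothendieck polynomials indexed by the shapes occurring in the $K$-Knuth class of $h$. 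Since the $J_{\lambda}$ are symmetric (cited in the remark after their definition), this already shows the image of $\phi$ lies in $Sym$ (or rather $\hat{\Lambda}$); one should remark that the infinite sums live in the completion, as set up earlier in Section~5.

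For multiplicativity, first I would recall that $\QSym$ carries a product under which $L_{\mathcal{C}(u)} \cdot L_{\mathcal{C}(v)}$ expands as $\sum L_{\mathcal{C}(w)}$ over shuffles $w$ of $u$ and $v[n]$ (up to the standard descent bookkeeping at the concatenation point); this is the classical statement that the descent-composition map intertwines the shuffle product on words with the product of fundamental quasisymmetric functions. Granting that, one computes directly from the definitions of $\cdot$ on $KPR$ and of $\phi$:
\[
\phi([[h]]) \cdot \phi([[h']]) = \sum_{w \equiv h,\ w' \equiv h'} L_{\mathcal{C}(w)} \cdot L_{\mathcal{C}(w'[n])} = \sum_{w \equiv h,\ w' \equiv h'}\ \sum_{v \in w \shuffle w'[n]} L_{\mathcal{C}(v)} = \phi\bigl([[h]]\cdot[[h']]\bigr),
\]
where the last equality is the definition of the product on $KPR$ together with $\phi$ applied term-by-term. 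One subtlety to address: the inner equality requires that the standardization/descent composition of a word $w'[n]$ appearing shifted be handled correctly, and that passing to the equivalence class on both sides is consistent — this is where Lemma~\ref{restricttointerval} (equivalence is preserved under restriction to an interval) and the compatibility of $K$-Knuth equivalence with standardization, already used in the proof of Theorem~\ref{thm:PRcoprod}, enter.

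For comultiplicativity, the analogous classical fact is that $\Delta L_{\mathcal{C}(w)} = \sum_{i} L_{\mathcal{C}(\overline{w_1\cdots w_i})} \otimes L_{\mathcal{C}(\overline{w_{i+1}\cdots w_n})}$, i.e.\ deconcatenation of words corresponds to the coproduct on $\QSym$ expressed in the $L$-basis. Then
\[
\Delta\bigl(\phi([[h]])\bigr) = \sum_{w \equiv h} \Delta L_{\mathcal{C}(w)} = \sum_{w \equiv h}\ \sum_{i=0}^{|w|} L_{\mathcal{C}(\overline{w_1\cdots w_i})} \otimes L_{\mathcal{C}(\overline{w_{i+1}\cdots w_n})} = (\phi \otimes \phi)\Delta([[h]]),
\]
the last step being the definition of $\Delta$ on $KPR$ via standardized deconcatenations together with $\phi$ applied on each tensor factor. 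The counit and unit are checked trivially ($[[\emptyset]] \mapsto 1$, and the length-grading forces compatibility with the counit). I expect the main obstacle to be purely bookkeeping rather than conceptual: one must make sure that the descent composition $\mathcal{C}$ behaves correctly under the shift $w \mapsto w[n]$ and under standardization $w \mapsto \overline{w}$ on initial words, so that the classical $\QSym$ identities — stated for words with distinct letters — apply verbatim to our initial words (which may have repeats). The cleanest route is to reduce to the distinct-letter case by standardizing and then note, exactly as in the proof of Theorem~\ref{thm:PRcoprod}, that $K$-Knuth equivalence commutes with standardization and with restriction to intervals, so no new phenomena appear; alternatively, one can observe that $\mathcal{C}(w)$ depends only on $\overline{w}$ by definition, which trivializes all the standardization steps.
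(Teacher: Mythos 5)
Your proposal is correct and follows essentially the same route as the paper: both arguments reduce to the classical identities for fundamental quasisymmetric functions, namely that the shuffle of words corresponds to the product $L_{\mathcal{C}(w')}\cdot L_{\mathcal{C}(w'')}=\sum_{v\in w'\shuffle w''[n]}L_{\mathcal{C}(v)}$ and that deconcatenation of words corresponds to the coproduct of $L_{\mathcal{C}(w)}$ (the paper states the latter via concatenation/near-concatenation of compositions matched to descent/non-descent split points, which is equivalent to your deconcatenation form). The bookkeeping points you flag — invariance of $\mathcal{C}$ under the shift $w\mapsto w[n]$ and under standardization — are exactly the implicit steps in the paper's computation.
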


\begin{proof}
First we show the map preserves the product. Note that $$L_{\mathcal{C}(w')} \cdot L_{\mathcal{C}(w'')}=\sum_{w\in\text{ Sh}(w',w''[n])}L_{\mathcal{C}(w)},$$ where the sum is over 
all shuffles of $w'$ and $w''[n]$ (see \cite{LP}). Thus, 
\begin{eqnarray}
\phi([[h_1]]\cdot [[h_2]])&=&\phi\left(\sum_{w'\equiv h_1,w''\equiv h_2}w'\shuffle w''\right) \nonumber\\
&=&\sum_{w'\equiv h_1,w''\equiv h_2}L_{\mathcal{C}(w'\shuffle w'')} \nonumber \\
&=&\left(\sum_{w'\equiv h_1}L_{\mathcal{C}(w')}\right) \left(\sum_{w''\equiv h_2} L_{\mathcal{C}(w'')}\right) = \phi([[h_1]])\phi([[h_2]]).\nonumber
\end{eqnarray}

For the coproduct, we show the result 
for $\phi$ applied to $w\equiv h$ with $\phi(w)=L_{\mathcal{C}(w)}$ using that $\Delta(L_{\mathcal{C}(w)})=\sum L_{\beta}\otimes L_\gamma$, where we sum over all $\beta=(\beta_1,\ldots,\beta_k)$ and $\gamma=(\gamma_1,\ldots,\gamma_n)$ such that 
$(\beta_1,\ldots,\beta_k,\gamma_1,\ldots,\gamma_n)=\mathcal{C}(w)$ or $(\beta_1,\ldots,\beta_{k-1},\beta_k+\gamma_1,\gamma_2,\ldots,\gamma_n)=\mathcal{C}(w)$ (see \cite{LP}). 
Then, for any $w\equiv h$, $$\Delta(\phi(w))=\Delta(L_{\mathcal{C}(w)})=\sum L_{\beta}\otimes L_\gamma=\phi\otimes\phi(\Delta(w))$$ 
because the terms in $\Delta(w)=\sum_{i=0}^{|w|}w_1\cdots w_i\otimes w_{i+1}\cdots w_{|w|}$
with $i \in \mathcal{D}(w)$ give exactly the terms $L_\beta\otimes L_\gamma$ where $(\beta_1,\ldots,\beta_k,\gamma_1,\ldots,\gamma_n)=\mathcal{C}(w)$ and all terms with $i\not\in \mathcal{D}(w)$ give 
exactly the terms $L_\beta\otimes L_\alpha$ where $(\beta_1,\ldots,\beta_{k-1},\beta_k+\gamma_1,\gamma_2,\ldots,\gamma_n)=\mathcal{C}(w)$. Thus $\Delta(\phi([[h]]))=\phi\otimes\phi(\Delta([[h]])$.
\end{proof}

\begin{theorem}\label{thm:phi(h)}
 We have $$\phi([[h]]) = \sum_{\r(T) \equiv h} J_{\lambda(T)},$$
where the sum is over all tableaux $K$-Knuth equivalent to $[[h]]$, and $\lambda(T)$ denotes the shape of $T$.
\end{theorem}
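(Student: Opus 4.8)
The plan is to combine Theorem \ref{thm:Pp} with Lemma \ref{lem:hecktoins} (which splits $[[h]]$ into insertion classes) and with the definition of $\phi$. First I would start from the definition $\phi([[h]]) = \sum_{w \equiv h} L_{\mathcal{C}(w)}$ and reorganize the sum. By Theorem \ref{cor:imply}, every word $w \equiv h$ Hecke inserts into a tableau $P(w)$ whose reading word is $K$-Knuth equivalent to $h$; conversely, if $\r(T) \equiv h$, then every $w$ with $P(w) = T$ satisfies $w \equiv \r(T) \equiv h$ by Theorem \ref{cor:imply} applied to $w \sim \r(T)$ (using Lemma \ref{lem:insrowword}). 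Hence the index set $\{w : w \equiv h\}$ is the disjoint union, over increasing tableaux $T$ with $\r(T) \equiv h$, of the insertion classes $\{w : P(w) = T\}$. This is exactly the content of Lemma \ref{lem:hecktoins}, now applied at the level of the quasisymmetric images.

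Next I would carry out the regrouping:
\begin{align}
\phi([[h]]) &= \sum_{w \equiv h} L_{\mathcal{C}(w)} = \sum_{\substack{T : \r(T) \equiv h}} \ \sum_{P(w) = T} L_{\mathcal{C}(w)}. \nonumber
\end{align}
Then I would invoke Theorem \ref{thm:Pp}, which states that for any fixed increasing tableau $T$ of shape $\lambda(T)$ one has $\sum_{P(w) = T} L_{\mathcal{C}(w)} = J_{\lambda(T)}$. Substituting this into the inner sum yields $\phi([[h]]) = \sum_{\r(T) \equiv h} J_{\lambda(T)}$, which is the claimed identity.

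The one point requiring a little care is the justification that the reindexing is legitimate as an identity of formal power series (equivalently in $\hat\Lambda$, or in the completed ring of quasisymmetric functions): the sum over $w \equiv h$ is infinite and the sum over tableaux $T$ with $\r(T) \equiv h$ is finite by Lemma \ref{lem:fin}, while each insertion class $\{w : P(w) = T\}$ is itself infinite. So the rearrangement groups an infinite sum into finitely many infinite blocks, and within each block Theorem \ref{thm:Pp} already guarantees convergence coefficient-by-coefficient to the symmetric function $J_{\lambda(T)}$. I would note this briefly, citing Lemma \ref{lem:fin} for finiteness of the outer index set, and observe that every word of a fixed length contributes to only one block, so no term is counted twice and the interchange of summation is valid. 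That is essentially the whole argument; the main (very mild) obstacle is simply spelling out the bijection between $\{w \equiv h\}$ and $\bigsqcup_{\r(T) \equiv h} \{w : P(w) = T\}$ cleanly, which is immediate from Theorem \ref{cor:imply} and Lemma \ref{lem:insrowword}.
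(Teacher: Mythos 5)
Your proposal is correct and matches the paper's proof: both start from the definition of $\phi$, decompose the equivalence class $\{w : w\equiv h\}$ into insertion classes indexed by tableaux $T$ with $\r(T)\equiv h$ (via Lemma \ref{lem:hecktoins}), and apply Theorem \ref{thm:Pp} to each block to obtain $J_{\lambda(T)}$. The extra remarks on finiteness of the outer index set and the validity of the rearrangement are sound but go slightly beyond what the paper spells out.
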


\begin{proof}
Using Theorem \ref{thm:Pp}, we have
$$\phi([[h]])=\sum_{w\equiv h}L_{\mathcal{C}(w)}=\sum_{T\equiv P(h)}\sum_{P(w)=T}L_{\mathcal{C}(w)}=
\sum_{T\equiv P(h)}J_{\lambda(T)}=\sum_{\r(T) \equiv h} J_{\lambda(T)}.$$
\end{proof}

\section{Littlewood-Richardson rule}

\subsection{LR rule for Grothendieck polynomials}

\begin{theorem} \label{thm:LR}
 Let $T$ be a URT of shape $\mu$. Then the coefficient $c_{\lambda, \mu}^{\nu}$ in the decomposition 
$$G_{\lambda} G_{\mu} = \sum_{\nu} (-1)^{|\nu|-|\lambda|-|\mu|} c_{\lambda, \mu}^{\nu} G_{\nu}$$ is equal to the number of increasing tableaux $R$
of skew shape $\nu/\lambda$ such that $P(\r(R))=T$.
\end{theorem}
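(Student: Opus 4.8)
The plan is to deduce this from the machinery already built: the map $\phi: KPR \to \Lambda$ is a bialgebra morphism (Theorem \ref{thm:phi}), and for a URT $T$ of shape $\mu$ we have $\phi([[\r(T)]]) = J_\mu$ by Theorem \ref{thm:phi(h)}, since the only tableau $K$-Knuth equivalent to $\r(T)$ is $T$ itself. The strategy is therefore to compute the product $[[\r(T_\lambda)]] \cdot [[\r(T)]]$ in $KPR$, where $T_\lambda$ is any chosen URT of shape $\lambda$ (for instance the superstandard tableau $S_\lambda$, known to be a URT), apply $\phi$ to both sides, and read off the structure constants.

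First I would invoke Theorem \ref{thm:ins} (the product rule for URTs): with $T_1 = T_\lambda$ of shape $\lambda$ on alphabet $[n]$ and $T_2 = T$ of shape $\mu$ on alphabet $[m]$,
$$\left(\sum_{P(w)=T_\lambda} w\right)\cdot\left(\sum_{P(w)=T} w\right) = \sum_{S \in T(T_\lambda \shuffle T)}\sum_{P(w)=S} w,$$
where $S$ ranges over increasing tableaux with $S|_{[n]} = T_\lambda$ and $P(\r(S)|_{[n+1,n+m]}) = T$. Next, applying $\phi$ and using Theorem \ref{thm:phi(h)} on each side, the left side becomes $J_\lambda J_\mu$, and the right side becomes $\sum_{S \in T(T_\lambda \shuffle T)} J_{\lambda(S)}$. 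So I would conclude
$$J_\lambda J_\mu = \sum_{S \in T(T_\lambda \shuffle T)} J_{\lambda(S)},$$
hence the coefficient of $J_\nu$ equals the number of increasing tableaux $S$ of shape $\nu$ with $S|_{[n]} = T_\lambda$ and $P(\r(S)|_{[n+1,n+m]}) = T$. By Corollary \ref{cor:coincide}, this same number is the coefficient $c_{\lambda,\mu}^\nu$ appearing in $G_\lambda G_\mu = \sum_\nu (-1)^{|\nu|-|\lambda|-|\mu|} c_{\lambda,\mu}^\nu G_\nu$.

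The remaining step — and the one I expect to require the most care — is to biject the set of such tableaux $S$ with the set of increasing tableaux $R$ of skew shape $\nu/\lambda$ satisfying $P(\r(R)) = T$. The correspondence is the evident one: given $S$ of shape $\nu$ with $S|_{[n]} = T_\lambda$, since $T_\lambda$ is itself a tableau of straight shape $\lambda$, the entries $> n$ occupy exactly the skew shape $\nu/\lambda$; let $R$ be that skew tableau with each entry decreased by $n$. Then $R$ is an increasing tableau of shape $\nu/\lambda$ and $\r(R) = \r(S)|_{[n+1,n+m]}[-n]$, so $P(\r(R)) = T$ precisely when $P(\r(S)|_{[n+1,n+m]}) = T$. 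Conversely any such $R$ extends uniquely to such an $S$ by placing $T_\lambda$ in the $\lambda$-part and adding $n$ back to the entries of $R$; here I would use Lemma \ref{lem:restrict} and the fact that entries in $[n]$ are unaffected by larger entries to see that $S|_{[n]} = T_\lambda$ is automatic and that $S$ is genuinely increasing. The only subtlety is checking that the shape condition "$S|_{[n]} = T_\lambda$ forces the large entries into $\nu/\lambda$" is exactly captured, together with confirming that the choice of which URT $T_\lambda$ of shape $\lambda$ we use is immaterial (it is, since $\phi([[\r(T_\lambda)]]) = J_\lambda$ regardless). With the bijection in hand, the count of $S$'s equals the count of $R$'s, completing the proof.
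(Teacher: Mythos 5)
Your proposal is correct and follows essentially the same route as the paper: fix an auxiliary URT of shape $\lambda$, apply Theorem \ref{thm:ins} to the product in $KPR$, push through $\phi$ via Theorems \ref{thm:phi} and \ref{thm:phi(h)} to get $J_\lambda J_\mu = \sum J_{\lambda(Y)}$, and finish with Corollary \ref{cor:coincide}. The only difference is that you spell out the straight-shape-to-skew-shape bijection (including the shift by $n$) that the paper leaves implicit, which is a reasonable elaboration rather than a departure.
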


\begin{proof}
In addition to the URT $T$ of shape $\mu$, fix a URT $T'$ of shape $\lambda$. Then by Theorems \ref{thm:ins}, \ref{thm:phi(h)}, and \ref{thm:phi}, 
\begin{eqnarray}
J_\lambda J_\mu&=&\phi([[\r(T')]])\phi([[\r(T)]])\nonumber \\
&=& \phi\left([[\r(T')]]\cdot[[\r(T)]]\right)\nonumber  \\
&=&\phi(\sum_{Y\in T(T'\shuffle T)}\sum_{P(w)=Y}w) \nonumber \\
&=& \sum_{Y\in T(T' \shuffle T)} J_{\lambda(Y)},\nonumber 
\end{eqnarray}
where $T(T'\shuffle T)$ is the finite set of tableaux $Y$ such that $T|_{[|\lambda|]}=T'$ and $P(\r(Y)|_{[|\lambda|+1,|\lambda|+|\mu|]})=T$. 
Thus the coefficient of $J_\nu$ in the product is the number of increasing
tableaux $R$ of skew shape $\nu/\lambda$ such that $P(\r(R))=T$. The desired result follows from Corollary \ref{cor:coincide}.
\end{proof}


\begin{example}
The coefficient of $G_{(4,3,1)}$ in $G_{(3,1)} G_{(2,1)}$ is $-3$. To see this, fix $T$ to be the tableau with reading word $312$ as in the previous 
example, note $(-1)^{|\nu|-|\lambda|-|\mu|}=-1$, and notice the tableaux shown below are the only tableaux of shape $(4,3,1)/(3,1)$ with $P(\r(R))=T$.
\begin{center}
 \begin{ytableau}
  \none & \none & \none & 2 \\
\none & 1 & 2\\
3
 \end{ytableau}\hspace{.5in}
 \begin{ytableau}
  \none & \none & \none & 2 \\
\none & 1 & 3 \\
3
 \end{ytableau}\hspace{.5in}
\begin{ytableau}
  \none & \none & \none & 2 \\
\none & 1 & 3 \\
1
 \end{ytableau}
\end{center}
\end{example}

Note that the claim may be false if $T$ is not a URT.

\begin{example}
 Suppose we want to find the coefficient of $(4,2)$ in the product of $(4,3,2)$ and $(2,1)$. 
Using Buch's rule \cite{B}, we compute that the coefficient is $3$, corresponding to the following fillings of $(2,1)$:
\begin{center}
 \begin{ytableau}
12 & 3 \\
3
 \end{ytableau}\hspace{.5in}
 \begin{ytableau}
1 & 23 \\
3
 \end{ytableau}\hspace{.5in}
 \begin{ytableau}
1 & 3 \\
23
 \end{ytableau}\hspace{.1in}.
\end{center}
However, if we choose the filling of $(3,2)$ with row reading word $34124$, one can easily 
check that there are only two ways to fill $(4,3,2)/(2,1)$ with words equivalent to $34124$ 
that insert into the chosen filling of $(3,2)$. 
The fillings are shown below.
\begin{center}
 \begin{ytableau}
  $ $ & $ $ & 2 & 4 \\
$ $ & 2 & 4 \\
1 & 3
 \end{ytableau}\hspace{1in}
 \begin{ytableau}
  $ $ & $ $ & 2 & 4 \\
$ $ & 1 & 4 \\
3 & 4
 \end{ytableau}

\end{center}
\end{example}

Now we can give our own proof of Theorem \ref{thm:prodfin}.

\begin{proof}
Combine Theorem \ref{thm:phi} with Corollary \ref{cor:prod}.

Alternatively, the argument can be made directly from Theorem \ref{thm:LR}. 
Note that the set of shapes $\nu$ such that there exists an increasing tableau $R$
of skew shape $\nu/\lambda$ such that $P(\r(R))=T$ is finite. This is because each cell in  $\nu/\lambda$ can be filled only with letters occuring in 
$T$, and thus size of each row and column in $\nu/\lambda$ is bounded.
\end{proof}

\subsection{Dual LR rule for Grothendieck polynomials}

Given two Young diagrams, $\lambda$ and $\mu$, define skew shape $\lambda \oplus \mu$ to be the skew shape obtained by putting $\lambda$ and $\mu$ corner to corner. For example, 
The figure below shows $(3,1)\oplus (2,2)$.
\begin{center}
 \begin{ytableau}
  \none & \none & \none & $ $ & $ $ \\
 \none & \none & \none & $ $ & $ $ \\
$ $ & $ $ & $ $ \\
$ $
 \end{ytableau}
\end{center}

\begin{theorem}\label{thm:skewLR}
 Let $T_0$ be a URT of shape $\nu$. Then the coefficient $d_{\lambda, \mu}^{\nu}$ in the decomposition 
$$\Delta(G_{\nu}) = \sum_{\lambda, \mu} (-1)^{|\nu|-|\lambda|-|\mu|} d_{\lambda, \mu}^{\nu} G_{\lambda} \otimes G_{\mu}$$ is equal to the number of increasing tableaux $R$
of skew shape $\lambda \oplus \mu$ such that $P(\r(R))=T_0$.
\end{theorem}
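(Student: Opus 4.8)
The plan is to mirror the proof of Theorem~\ref{thm:LR}, using the coproduct formula of Theorem~\ref{thm:coins} in place of the product formula of Theorem~\ref{thm:ins}, and pushing through the bialgebra morphism $\phi$. First I would apply $\phi$ to the identity in Theorem~\ref{thm:coins}. Since $T_0$ is a URT, the element $\sum_{P(w)=T_0} w$ equals $[[\r(T_0)]]$, so $\phi$ sends it to $\sum_{\r(T)\equiv\r(T_0)} J_{\lambda(T)} = J_\nu$ by Theorem~\ref{thm:phi(h)} (the sum collapsing to a single term because $T_0$ is the unique tableau in its class). On the other hand, $\phi$ is a bialgebra morphism by Theorem~\ref{thm:phi}, so $\Delta(\phi([[\r(T_0)]])) = (\phi\otimes\phi)(\Delta([[\r(T_0)]]))$, and applying $\phi\otimes\phi$ to the right-hand side of Theorem~\ref{thm:coins} gives
\begin{equation}
\Delta(J_\nu) = \sum_{(T',T'')\in T(T_0)} J_{\lambda(\overline{T'})}\otimes J_{\lambda(\overline{T''})} = \sum_{(T',T'')\in T(T_0)} J_{\lambda(T')}\otimes J_{\lambda(T'')},
\end{equation}
using that standardization does not change the shape of a tableau, and that $\phi(\sum_{P(w)=\overline{T'}}w) = J_{\lambda(\overline{T'})}$ by Theorem~\ref{thm:Pp} applied to the increasing tableau $\overline{T'}$.

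Next I would reorganize the index set $T(T_0)$, which consists of pairs of increasing tableaux $(T',T'')$ with $P(\r(T')\r(T''))=T_0$, into the combinatorial objects named in the statement. A pair $(T',T'')$ where $T'$ has shape $\lambda$ and $T''$ has shape $\mu$ is exactly the same data as a single increasing filling $R$ of the skew shape $\lambda\oplus\mu$: place $T'$ in the $\lambda$-part and $T''$ in the $\mu$-part, corner to corner. Because the two parts share no row and no column, any filling of $\lambda\oplus\mu$ that is increasing along rows and columns is increasing if and only if its restrictions to the two parts are; moreover the row reading word of $R$ is precisely $\r(T')\,\r(T'')$ (the $\mu$-part occupies the higher rows, hence is read last, and we must be a touch careful about the convention, but $\r(R)$ is $\r(T'')$ followed by $\r(T')$ or the reverse depending on how $\oplus$ stacks the diagrams; either way, after accounting for the convention, $P(\r(R))=T_0$ corresponds to $P(\r(T')\r(T''))=T_0$). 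Hence the number of $(T',T'')\in T(T_0)$ with shapes $(\lambda,\mu)$ equals the number of increasing tableaux $R$ of shape $\lambda\oplus\mu$ with $P(\r(R))=T_0$. Reading off the coefficient of $J_\lambda\otimes J_\mu$ in $\Delta(J_\nu)$ then gives that count, and converting from the $J$-basis to the $G$-basis via Corollary~\ref{cor:coincide} (together with the fact, whose proof is identical to the product case, that splitting the alphabet and the sign bookkeeping match up) yields the stated formula with the sign $(-1)^{|\nu|-|\lambda|-|\mu|}$.

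The step I expect to require the most care is the translation between pairs $(T',T'')$ and skew tableaux of shape $\lambda\oplus\mu$, specifically verifying that $P(\r(R))=T_0$ is literally equivalent to $P(\r(T')\r(T''))=T_0$ once the reading-word convention for $\oplus$ is pinned down. One must check that the concatenation order of the reading words coming out of the $\oplus$-diagram agrees with the order used in the definition of $T(T_0)$ in Theorem~\ref{thm:cohecktoins} and Theorem~\ref{thm:coins}, and if it is reversed, reconcile this — e.g. by noting that $\lambda\oplus\mu$ can be drawn with either summand on top, and choosing the drawing that makes the reading word come out as $\r(T')\r(T'')$. The only other point needing a word of justification is the finiteness of the sum, which is not actually in question here since $\Delta(J_\nu)$ is a priori an element of $\widehat{\Lambda\otimes\Lambda}$; but in fact Theorem~\ref{thm:coprodfin} (Buch's result) already guarantees finiteness, and alternatively one sees directly that every cell of $\lambda\oplus\mu$ must be filled with a letter occurring in $T_0$, bounding the shapes $\lambda,\mu$ that can occur. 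Everything else is a formal consequence of results already established, exactly as in the proof of Theorem~\ref{thm:LR}.
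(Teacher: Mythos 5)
Your proposal is correct and follows essentially the same route as the paper: apply $\phi\otimes\phi$ to the coproduct formula of Theorem \ref{thm:coins}, identify each pair $(T',T'')\in T(T_0)$ with the skew increasing tableau $R=T'\oplus T''$, and transfer from the $J$-basis to the $G$-basis via Corollary \ref{cor:coincide}. The extra care you take with the reading-word convention for $\oplus$ and with $\phi(\sum_{P(w)=\overline{T'}}w)=J_{\lambda(T')}$ via Theorem \ref{thm:Pp} is consistent with (and slightly more explicit than) the paper's argument.
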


\begin{proof}
 We have that 
\begin{eqnarray}
 \Delta(J_\nu) &=& \Delta(\phi([[\r(T_0)]])) \nonumber \\
&=& \phi\otimes\phi(\Delta([[\r(T_0)]])) \nonumber \\
&=& \phi\otimes \phi\left(\sum_{(T',T'')\in T(T_0)} \sum_{P(w)=\overline{T'}}w \otimes \sum_{P(w)=\overline{T''}}w\right) \nonumber \\
&=& \sum_{(T',T'')\in T(T_0)}\phi([[\r(\overline{T'})]])\otimes \phi([[\r(\overline{T''})]]) \nonumber \\
&=& \sum_{(T',T'')\in T(T_0)}J_{\lambda(T')}\otimes J_{\lambda(T'')}, \nonumber 
\end{eqnarray}
where $T(T_0)$ is the finite set of pairs of tableaux $T',T''$ such that $P(\r(T') \r(T'')) = T_0$. Letting $R=T'\oplus T''$, the coefficient of $J_\lambda \otimes J_\mu$ is exactly 
the number of increasing tableaux $R$
of skew shape $\lambda \oplus \mu$ such that $P(\r(R))=T_0$. The desired result follows from Corollary \ref{cor:coincide}.
\end{proof}




\begin{example}
Fix $T_0$ to be the URT of shape $(3,2)$ with reading word $45123$. The coefficient of $G_{(2,1)}\otimes G_{(2,1)}$ in $G_{(3,2)}$ is $-3$ because of the following 
three tableaux of shape $(2,1)\oplus (2,1)$.
\begin{center}
 \begin{ytableau}
  \none & \none & 2 & 3 \\
\none & \none & 5 \\
1 & 2 \\
4
 \end{ytableau}\hspace{.5in}
 \begin{ytableau}
  \none & \none & 2 & 3 \\
\none & \none & 5 \\
1 & 4 \\
4
 \end{ytableau}\hspace{.5in}
 \begin{ytableau}
  \none & \none & 2 & 3 \\
\none & \none & 5 \\
1 & 5 \\
4
 \end{ytableau}
\end{center}
\end{example}

Note that the claim may be false if $T_0$ is not a URT.

\begin{example} \label{ex:ThYfail}
Suppose we have \\
\begin{center}
$T_0=$
 \begin{ytableau}
  1 & 2 & 4 \\
3 & 4
 \end{ytableau}\hspace{.1in}.
\end{center}
We saw in Example \ref{ex:3124} that $T_0$ is not a URT. Now let $\lambda=(2,1)$ and $\mu=(3,1)$. According to Buch's rule in \cite{B}, the coefficient of $G_\lambda \otimes G_\mu$ in $\Delta(G_{(3,2)})$ 
is at least 1 due to the following set-valued tableau:
\begin{center}
 \begin{ytableau}
  1 & 1 & 3 \\
23 & 34
 \end{ytableau}\hspace{.1in}.
\end{center}
However, one can check that there is no skew tableau $R$ of shape $(2,1)\oplus (3,1)$ with $P(\r(R))= T_0$.
\end{example}

Now we can give our own proof of Theorem \ref{thm:coprodfin}.

\begin{proof}
Combine Theorem \ref{thm:phi} with Corollary \ref{cor:coprod}.

Alternatively, the argument can be made directly from Theorem \ref{thm:skewLR}. 
Note that the number of pairs $\lambda, \mu$ such that there exists an increasing tableaux $R$
of skew shape $\lambda \oplus \mu$ such that $P(\r(R))=T_0$ is finite.
This is because each $\lambda$ and $\mu$ has to be filled with alphabet of $T_0$ only, hence we can apply Lemma \ref{lem:fin}. 
\end{proof}



\begin{thebibliography}{99}

\normalsize

\bibitem{B}
A.~Buch, A Littlewood Richardson rule for the K-theory of Grassmannians,
\textsl{Acta Mathematica}\, Vol. 189 (2002), pp. 37-78.

\bibitem{BKSTY}
A.~Buch, A.~Kresch, M.~Shimozono, H.~Tamvakis, and A.~Yong, 
Stable Grothendieck polynomials and K-theoretic factor sequences, 
\textsl{Math. Annalen}\ \textbf{2} , Vol. 340, (2008), pp. 359-382. 

\bibitem{BS}
A.~Buch and M.~Samuel,
$K$-theory of miniscule varieties, preprint, arXiv:1306.5419v1.

\bibitem{DHNT}
G.~Duchamp, F.~Hivert, J.-C.~Novelli, J.-Y.~Thibon,
Noncommutative Symmetric Functions VII: Free Quasi-Symmetric Functions Revisited 
\textsl{Annals of Combinatorics}\ \textbf{15}, (2011), pp. 655-673.

\bibitem{FG}
S.~Fomin, C.~Greene, 
Noncommutative schur functions and their applications, 
\textsl{Discrete Mathematics}\ \textbf{193} (1998), pp. 179-200.

\bibitem{FK}
S.~Fomin, A.~Kirillov, 
 Grothendieck polynomials and the Yang-Baxter equation, 
\textsl{Proc. 6th Intern. Conf. on Formal Power Series and Algebraic Combinatorics}, DIMACS, (1994), pp. 183-190.

\bibitem{GK}
C.~Greene, Some partitions associated with a partially ordered set,
\textsl{J. Combinatorial Theory Ser. A}\ \textbf{1}, Vol. 20, (1976), pp. 69-79.

\bibitem{LP}
T.~Lam and P.~Pylyavskyy, 
Combinatorial Hopf algebras and K-homology of Grassmannians,
\textsl{Int. Math. Res. Not.}\ Vol. 2007, (2007), rnm 125.

\bibitem{LS}
A.~Lascoux and M.-P.~Schutzenberger, 
Symmetry and flag manifolds, 
\textit{Lecture Notes in Mathematics}\ Vol. 996, (1983), pp. 118-144.

\bibitem{LTh}
B.~Leclerc and J.Y.~Thibon, 
The Plactic Monoid, in M. Lothaire, Algebraic Combinatorics on Words, 
Cambridge, (2002), pp. 164-195.

\bibitem{M}
L.~Manivel, 
Symmetric Functions, Schubert Polynomials, and Degeneracy Loci, 
American Mathematical Society, (2001).

\bibitem{MS}
E.~Miller, B.~Sturmfels, 
Combinatorial commutative algebra, 
Springer-Verlag, New York, (2005).

\bibitem{PR}
S.~Poirier and C.~Reutenauer,Hopf algebras of tableaux,
\textsl{Ann. Sci. Math. Quebec}\ \textbf{19}, (1995), pp. 79-90.

\bibitem{RT}
V.~Reiner, M.~Taskin,
 The weak and Kazhdan-Lusztig orders on standard Young tableaux, 
\textsl{Formal Power Series and Algebraic Combinatorics}, Vancouver, (2004).

\bibitem{ShS}
S.~Shnider and Sh.~Sternberg, 
Quantum groups: from coalgebras to Drinfeld algebras: a guided tour,
Cambridge, (1993).

\bibitem{EC2}
R.~Stanley, 
Enumerative Combinatorics, Vol 2, Cambridge, (1999).

\bibitem{ThY}
H.~Thomas and A.~Yong, 
Longest Increasing Subsequences, Plancherel-Type Measure and the Hecke Insertion Algorithm,
\textsl{Advances in Applied Math.}\ \textbf{1-4}, Vol. 46, (2011), pp. 610-642. 

\bibitem{ThY2}
H.~Thomas and A.~Yong, 
A jeu de taquin theory for increasing tableaux, with applications to K-theoretic Schubert calculus,
\textsl{Algebra Number Theory}\ \textbf{3}, (2009), pp. 121--148. 

\bibitem{ThY3}
H.~Thomas and A.~Yong, 
The direct sum map on Grassmannians and jeu de taquin for increasing tableaux,
\textsl{Int. Math. Res. Not.}\ \textbf{12}, (2011), pp. 2766--2793.

\bibitem{Zel}
A.~Zelevinsky, 
Representations of finite classical groups: a Hopf algebra approach,
Lecture Notes in Mathematics, Vol 869, Berlin-New York, (1981). 
\end{thebibliography}
\end{document}